\newcommand{\xddots}{%
	\raise 4pt \hbox {.}
	\mkern 6mu
	\raise 1pt \hbox {.}
	\mkern 6mu
	\raise -2pt \hbox {.}
}
\def\bR{\mathbb R}
\def\bR{\boldsymbol{R}}
\newcommand{\E}{\operatorname{\mathbb{E}}}
\renewcommand{\P}{\operatorname{\mathbb{P}}}
\newtheorem{definition}{Definition}
\newtheorem{remark}{Remark}
\newtheorem{Theorem}{Theorem}
\newtheorem{theorem}{Theorem}
\newtheorem{lemma}[Theorem]{Lemma}
\newtheorem{prop}[Theorem]{Proposition}
\newtheorem{Assumption}[Theorem]{Assumption}
\def\argmax{\mathop{\rm arg\,max}}
\def\argmin{\mathop{\rm arg\,min}}
\def\det{\mathop{\rm det}}
\def\diag{\mathop{\rm diag}\nolimits}
\def\Hyp{{\rm Hyp}}
\def\id{\mathop{\rm id}\nolimits}
\def\rank{{\rm rank}}
\def\rank{{\rm rank}}
\def\op{{\mathrm {op}}}
\def\beq{\begin{equation}}
\def\eeq{\end{equation}}
\def\E{{\mathrm E}}
\def\P{{\mathrm P}}
\def\cpset{\mathcal{D}}
\def\Hyp{{\mathrm H}}
\def\one{{\mathbf 1}}
\def\tr{{\mathrm{tr}}}
\def\arg{{\mathrm{arg}}}
\def\diag{{\mathrm{diag}}}
\def\bR{{\mathbb R}}
\def\cN{{\mathcal N}}
\def\cF{{\mathcal F}}
\def\mX{{\mathcal X}}
\def\T{{ \mathrm{\scriptscriptstyle T} }}
\def\mX{{\mathfrak X}}
\def\mZ{{\mathfrak Z}}
\g@addto@macro{\UrlBreaks}{\UrlOrds}
\g@addto@macro{\UrlBreaks}{\UrlOrds}
\DeclareOldFontCommand{\bf}{\normalfont\bfseries}{\mathbf}
\begin{document}
	%
	%
	%
	%
	
	\title{Change point detection in low-rank VAR processes}
	\author{Farida Enikeeva, Olga Klopp and Mathilde Rousselot}
	\maketitle
	\begin{abstract}
		Vector autoregressive (VAR) models are widely used in multivariate time series analysis for describing the short-time dynamics of the data. The reduced-rank VAR models are of particular interest when dealing with  high-dimensional and highly correlated time series.  Many results for these models are based on the stationarity assumption that does not hold  in several applications when the data exhibits structural breaks. We consider a low-rank piecewise stationary VAR model with possible changes in the transition matrix of the observed process. We develop a new test of presence of a change-point in the transition matrix and show its minimax optimality with respect to the dimension and the sample size. Our two-step change-point detection strategy is based on the construction of estimators for the transition matrices and using them in  a penalized version of the likelihood ratio test statistic. The effectiveness of the proposed procedure is illustrated on synthetic data.
	\end{abstract}
	
\section{Introduction}
Vector autoregression (VAR) is a classical model of multivariate time series analysis that  has been successfully used to model data in epidemiology 
\citep{khan_modelling_2020}, economics and finance \citep{fan_sparse_2010}, medical research 
\citep{wild_graphical_2010}, econometrics \citep{StockWatson2001} and neuroscience 
\citep{gorrostieta_hierarchical_2013}. During the past two decades, a considerable  interest has been focused on the problems of high-dimensional inference. High-dimensional multivariate time series have a large amount of variables  but only a limited number of time steps. In such a situation the
VAR model is ill-posed: it suffers from the over-parametrization issue as the number of parameters in the coefficient matrix is comparable to or much larger than the number of time series observations. To address this issue  different structural assumptions on the transition matrix  have been proposed.  The idea  is that one expects the system to be controlled primarily by a low-dimensional subset of variables.  For example,  in finance, the data can have  a huge ambient dimension which includes financial instruments such as   stocks, bonds, and etc.  These financial instruments may be combined into a much smaller subset of macro-variables that actually govern the market.  

A reduced-rank VAR model for multivariate time series analysis was first introduced by \cite{velu_reduced_1986} (see also \citep{reinsel_multivariate_0000, luetkepohl2005} for detailed background). In  \citep{velu_reduced_1986} the authors show the consistency of the  least squares  estimator  of the VAR transition matrix modeled by a product of two  rectangular matrices. \cite{AhnReinsel1988}  generalize these results to nested low-rank autoregressive models.  \cite{negahban2011}  propose a least-squares nuclear norm penalized estimator for the transition matrix and show its consistency in the Frobenius norm. \cite{AlquierBDG20} consider the problem of prediction for low-rank VAR modes. Their method is based on rank-penalized least-squares estimator. More recently \cite{WangTsayVARopt2022}  proposed a method of the transition matrix estimation using the constrained Yule-Walker equations  and showed its  optimality under the $\beta$-mixing dependency condition. 

Another popular assumption on the matrix structure is the entry-wise sparsity. Under the entry-wise sparsity assumptions, \cite{SheHeLiWu2015} estimate jointly the transition matrix and the precision matrix of the process using joint regularization; \cite{BasuMichailidis2015} use an $\ell_1$-penalized log-likelihood for estimating the transition matrix and \cite{MelnykBanerjee2016}  use  a penalized log-likelihood estimator with $\ell_1$-norm penalization and  the group lasso penalty.  Finally, \cite{BasuLiMichailidis2019} consider the VAR model under both sparsity and low rank assumptions.  They assume that the transition matrix can be written as the sum of a sparse matrix and a low rank matrix. The proposed penalized least-squares estimator uses  the nuclear norm penalty combined with the $\ell_1$-penalization. 

Many of these results are based on the assumption of stationarity. However, in applications the data might exhibit structural breaks with several discontinuity points in the distribution and the stationarity assumption  does not hold for the entire data set. In such situations a natural approach is to assume a piece-wise stationary model and apply existing methods to the intervals of stationarity. Then, the crucial question is that of the identification of change-points.

Detection of structural changes  in multivariate time series is one of the major problems arising in applications. In neuroscience the breaks in the sequence of the electroencephalogram (EEG) signals correspond to changes in brain activity \citep{MichelMurray2011}. In financial analysis the volatility of the market indexes might change at certain time points due to some external event  \citep{AueHormann2009}. Efficient detection of such breaks heavily relies on the underlying mechanism of the data temporal evolution. 
The goal of this paper is to propose a procedure that allows to detect the presence of changes in the matrix parameter of a low-rank  VAR process.
Note that  classical methods of multidimensional change-point detection (see, for example, \citep{basseville_nikiforov_1993}) are not applicable in the high-dimensional setting since the model dimension can be much larger than the sample size.

\subsection{Low-rank VAR Model}
We say that the process $\{X_t\}_{t\in \mathbb Z}$ is a $p$-dimensional  \emph{vector autoregressive} (VAR) process $\mathrm{VAR}_p(\Theta,\Sigma_Z)$  with Gaussian innovations, if 
\begin{equation} \label{eq:var_def}
X_{t+1}=\Theta \,X_{t}+Z_{t+1}\quad \forall  t\in \mathbb Z,
\end{equation}
where $\Theta\in \mathbb R^{p\times p}$ is the transition matrix and $\{Z_t\}_{t\in\mathbb Z}$ is a $p$-dimensional centered  Gaussian independent    noise  with the covariance matrix $\Sigma_Z\in\mathbb R^{p\times p}$, $Z_t\sim \cN_p(0,\Sigma_Z)$. We assume that the operator norm of $\Theta$ satisfies $\|\Theta\|_{\mathrm{op}}=\gamma<1$. Under this condition $\{X_t\}_{t\in \mathbb Z}$ is a stationary causal process (cf. \citep{luetkepohl2005} for more details on stationary VAR processes) and the covariance matrix $\Sigma$ of $X_t$ satisfies the Lyapunov equation
\begin{equation}\label{eq:lyap}
\Sigma=\Theta\Sigma \Theta^{\T}+\Sigma_{Z}.
\end{equation}
If $\|\Theta\|_{\mathrm{op}}<1$, then~\eqref{eq:lyap} has a unique positive-definite solution $\Sigma$.

In a more general setting  the transition matrix may change over time. In our problem  we observe a trajectory $\mX=(X_0,\dots,X_T)$ of a piece-wise stationary centered $p$-dimensional Gaussian $\mathrm{VAR}_p(\Theta_1,\Theta_2,\Sigma_Z)$ process,
\begin{equation} \label{eq:var}
X_{t+1}=\Theta^t \,X_{t}+Z_{t+1}\quad  t=0,\dots,T-1,
\end{equation}
where $Z_t\sim \cN_p(0,\Sigma_Z)$ are i.i.d. and  
$$
\Theta^t=\Theta_1\one_{\{0\le t\le \tau\}}+ \Theta_2\one_{\{\tau+1 \le t\le T\}}.
$$
Here $\tau$ stands for the change-point in transition matrix. We assume that the transition matrices are of the rank at most $R$, $\mathrm{rank} (\Theta_j)\le R$ $\forall j=1,2$.
In general, $\Sigma_{Z}$ can vary over segments, but we consider it to be fixed to avoid additional technicalities. 
In the following we assume that the system matrices $\Theta_{j}$ are stable:
\begin{Assumption}\label{ass:stability}
	\label{ass: stability}For $j=1,2$ there exists $0<\gamma_j<1$ such that 
	$\| \Theta_{j}\|_{\op}\leq\gamma_j$.
\end{Assumption}
Note that under this assumption the matrices $\Theta_j$ satisfy the Lyapunov equation~\eqref{eq:lyap}, its unique solution gives the corresponding covariance matrix $\Sigma_j$ of the VAR process.
\subsection{Related work}
The existing literature mainly considers the related question of change-point localization under entry-wise sparsity or entry-wise sparsity  plus low-rank assumptions on the transition matrix. For example,  under sparsity assumptions on the transition matrices  \cite{SafikhaniShojaie2022} use a fused Lasso approach to estimate the breakpoints as well as the process parameters. They obtain the localization error bound  under the assumption  that the minimum distance between two change-points  is a sufficiently large constant independent of the number of observations. In the same setting, \cite{WangYu2019} obtain a stronger result   allowing decreasing distance between the change-points. Their estimator  is based on the combination of the  Lasso and group Lasso methods.  \cite{bai_multiple_2020} assume that the  transition matrices can be decomposed into a constant low-rank component and a sparse time evolving component. They develop a strategy for identification of change-points in the sparse component and provide probabilistic guarantees for the accuracy of their identification.  More recently, \cite{BaiSafikahniMichailidis2022} considered the  "low rank plus sparse" VAR model where both matrix components may change with time under the assumption that  the maximum absolute value of the entries of the low-rank component is bounded by $\Bigl(\dfrac{\log(pT)}{T}\Bigr)^{1/2}$, which goes to zero when the number of observations $T$ is growing. They propose a method of multiple change-point estimation based on the plug-in estimators obtained in  \citep{BasuLiMichailidis2019} and provide its theoretical guarantees.

\subsection{Our contributions}
We consider the problem of testing the presence of a change in the low-rank transition matrix. Our testing procedure is based on the plug-in test statistic with the estimated low-rank matrices before and after an eventual change.  We prove that our test allows for reliable change-point detection when the squared Frobenius norm of the change is larger then  $\dfrac{Rp}{Tq^{2}(t/T)}$  (up to a logarithmic factor, for the precise statement see Theorem \ref{th:UpperBound}) where $q(t)=\sqrt{t(1-t)}$ for $t\in[0,1]$  controls the impact of the change-point location on the rate.
An important point is that this result does not require any condition on the minimum spacing between the change-point and the boundaries of the interval of observations. We also show that our testing procedure is minimax-rate optimal both in terms of the dimension and the sample size  (Theorem~\ref{th:LowerBoundTesting}).
As a bi-product of our analysis we provide a new result on the consistency of the nuclear norm-penalized estimator of the transition matrix in operator norm (see Proposition~\ref{thm:operator_norm}).

\subsection{Notation}\label{subsec:notation}
We start with basic notation used in this paper. For any matrix $M$, we denote by $M_{ij}$ its entry in the $i$th row and $j$th column and by $M_{i\cdot}$ its $i$th row.  The notation $\diag(M)$ stands for the diagonal of a square matrix $M$ and $M^{\T}$ for the transpose of $M$. The column vector of dimension $n$ with unit entries is denoted by $\one_n=(1,\dots,1)^\T$ and the column vector of dimension $n$ with zero entries is denoted by $\boldsymbol{0}_n=(0,\dots,0)^\T$. The identity matrix of dimension $n$ is denoted by $\mathrm{id}_n$. For a set $A$, we denote by $\one_{A}$ its indicator function.

For any matrix $M \in \mathbb{R}^{d_1\times  d_2}$, $\| M \|_2$ is its Frobenius norm, $\|M\|_{\mathrm{op}}$ is its operator norm (its largest singular value).
We denote by $\sigma_j(M)$ the $j$th singular value and by $\sigma_{\max}(M)$ and $\sigma_{\min}(M)$ the largest and the smallest non-zero singular value of $M$. Assuming that matrix $M$ has rank $R$ we consider its ordered singular values $\sigma_1(M) \geq \sigma_2(M) \geq \dots \geq \sigma_R(M)>0$ and  we denote the condition number  of $M$ by $\kappa(M) = {\sigma_1(M)}/{\sigma_R(M)}$.

For  $M$, a $d_1\times d_2$ matrix, let $u_j(M)$ and $v_j(M)$ be respectively the left and right orthonormal singular vectors of $M$, $S_1(M)$ be the linear span of $\{u_j(M)\}$, $S_2(M)$ be the linear span of $\{v_j(M)\}$. 
We denote by $S^\bot$  the orthogonal complement of $S$.
For $B$, a $d_1\times d_2$ matrix,  let $\mathbf {Pr}_M(B)=B- \mathcal P_{S_1^{\bot}(M)}B\mathcal P_{S_2^{\bot}(M)}$ and $\mathbf {Pr}^{\bot}_M(B)= \mathcal P_{S_1^{\bot}(M)}B\mathcal P_{S_2^{\bot}(M)}$, where $\mathcal P_S$ is the orthogonal projector on the linear vector subspace $S$. 

We write $X\lesssim Y$ and $X\gtrsim Y$ if $X\le CY$ and, respectively, $X\ge CY$ for some absolute constant $C>0$.
%

We denote by $\mathcal{M}_p(R,\gamma)$ the set of all $p\times p$ real matrices of rank at most $R$ with the operator norm bounded by $\gamma$:
\begin{equation*}
\mathcal{M}_p(R,\gamma)=\Bigl\{M\in \mathbb R^{p\times p}:\rank (M)\leq R \quad\mbox{and}\quad \| M\|_{\mathrm{op}}\leq\gamma\Bigr\}
\end{equation*}

For any $t\in \{1,\dots,T-1\}$, we    introduce  the following   random matrices : 
\begin{align*}
\mX_{\ge t}:= \left(X_{t}, \dots, X_{T-1} \right)\in \mathbb R^{p\times(T-t)},&\quad \mX_{<t}:= \left(X_{0}, \dots, X_{t-1} \right)\in \mathbb R^{p\times t}\\
\mathfrak{Y}_{>t}:= \left(X_{t+1}, \dots, X_{T} \right)\in \mathbb R^{p\times(T-t)},&\quad \mathfrak{Y}_{\le t}:= \left(X_{1}, \dots, X_{t} \right)\in \mathbb R^{p\times t}\\
\mathfrak{Z}_{\ge t}:= \left(Z_{t}, \dots, Z_{T-1} \right)\in \mathbb R^{p\times(T-t)}&, \quad\mathfrak{Z}_{<t}:= \left(Z_{0}, \dots, Z_{t-1} \right)\in \mathbb R^{p\times t}.
\end{align*}
Here $\mX_{<t}$ ($\mX_{\ge t}$) contains our observations before  (after) a given time point $t$ and $\mathfrak{Z}_{\cdot}$ contains the innovation noise. $\mathfrak{Y}_{\cdot}$ is obtained from $\mX_{\cdot}$ by shifting our observations by one time step.

\section{Change-point detection problem}\label{single_change_point}
We will consider the problem of detection of a single change-point in the VAR model~\eqref{eq:var} with the transition matrix
$\Theta^t$ that can change at some unknown point $\tau\in \cpset_T\subseteq \{1,\dots,T-1\}$,
$$
\Theta^t=\Theta_1\one_{\{0\le t\le \tau\}}+ \Theta_2\one_{\{\tau+1 \le t\le T\}}.
$$
The difficulty of assessing the existence of a change-point can be quantified  by what  is called \textit{energy} of the change point. It is defined as the product of the Frobenius norm of the jump in transition matrix and the function $q(t)=\sqrt{t(1-t)}$ for $t\in[0,1]$. The function $q(t)$  quantifies the impact of  change-point location to the difficulty of detecting the change. Thus, we write the detection problem  as the problem of testing whether the  jump energy $$\mathcal E(t,\Theta_1,\Theta_2)=q(t/T)\|\Theta_1-\Theta_2\|_{2}$$ is zero or not. To formulate the hypothesis testing problem, we define the set of all pairs of  matrices with the operator norm bounded by $\gamma\in(0,1)$ before ($\Theta_1$) and after ($\Theta_2$)  the change at the location $t$ such that  the jump energy is at least $r>0$,
\begin{equation}\label{def_GammaSet}
\mathcal V_{p,t}(r)=\Bigl\{\bigl(\Theta_1,\Theta_2\bigr)\in \mathcal M_p^{\otimes2}(R,\gamma):\ \mathcal E(t,\Theta_1,\Theta_2)\ge r  \Bigr\}.
\end{equation}
Let $\mathcal V_{p,0}$ denote the set without a jump:
\begin{equation*}
\mathcal V_{p,0}=\Bigl\{\bigl(\Theta_1,\Theta_2\bigr)\in \mathcal M_p^{\otimes2}(R,\gamma):\Theta_1=\Theta_2 \Bigr\}
\end{equation*}
We will test the null hypothesis of no-change
\begin{equation}\label{eq:h_0}
\Hyp_0:\ (\Theta_1,\Theta_2)\in \mathcal V_{p,0}
\end{equation}
against the alternative hypothesis of a change in the transition matrix: 
\begin{equation}\label{eq:h_1}
\Hyp_1:\ (\Theta_1,\Theta_2)\in \mathcal V_{p,\tau}(\mathcal R_{p,\cpset_T})\ \mbox{for some}\ \tau \in \cpset_T,
\end{equation}
where $\mathcal R_{p,\cpset_T}>0$ is the minimal  amount of energy  that guarantees the change-point detection and $\cpset_T\subseteq\{1,\dots, T-1\}$.

We construct a change-point detection procedure  based on the penalized least-squares minimization approach. Our procedure has two steps. In the first step, for each $t\in \cpset_T$, we  compute  estimators of the transition matrix at each of the two intervals $[0,t]$ and $[t,T]$. Once equipped with such estimators we use an information criteria to build the test statistic and the change-point estimator. Our estimators of transition matrices are based on the nuclear norm minimization criteria which is a convex relaxation of the rank-constrained minimization problem. Note that the estimation of the transition matrix is easier if $t$ (respectively $T-t$) is large comparing to the matrix dimension $p$. For difficult cases of $t<p$ (respectively $T-t<p$) we use a slightly different penalization which allows us to cope with the lack of observations.

\subsection{Transition matrix estimation}

We start by giving a general construction of an estimator of the transition matrix $\Theta$ from the observations $\mX_{[t_1,t_2]}=(X_{t_1},\dots,X_{t_2})\in \bR^{p\times(t_2-t_1+1)}$ of a $\mathrm{VAR}_p(\Theta,\Sigma_Z)$ process observed at the consecutive time moments $t_1,t_1+1,\dots,t_2$. Later on, we will use the estimators of the transition matrices obtained withing the intervals $[0,t]$ and $[t,T]$ in order to construct our change-point detection procedure. 

For $t_2-t_1>p$, we set
\begin{equation}\label{eq:sdp1}
\varphi(\mX_{[t_1,t_2]},M)=\frac 1{t_2-t_1}\sum_{i=t_1}^{t_2-1} \|X_{i+1}-M X_i\|_2^2 + \lambda\|M \|_*
\end{equation}
and 
for $t_2-t_1\le p$,
\begin{equation}\label{eq:sdp2}
\varphi(\mX_{[t_1,t_2]},M)=\frac 1{t_2-t_1}\sum_{i=t_1}^{t_2-1} \|X_{i+1}-M X_i\|_2^2 + \lambda\|M \mX_{[t_1,t_2]}\|_*.
\end{equation}
Here $\|\cdot\|_*$ stands for the nuclear norm and  the regularization parameter$\lambda_{[t_1,t_2]}$ is given by 
\begin{equation}\label{eq:lambda}
\lambda=6c_1^* \sqrt{\| \Sigma_{Z}\|_{\mathrm{op}}}  \frac {\sqrt p} {t_2-t_1}\one_{\{t_2-t_1\leq p\}}+ 2c_2^* \frac{\| \Sigma\|_{\mathrm{op}}}{1-\gamma} \sqrt{\frac {p}{t_2-t_1}}\one_{\{t_2-t_1> p\}},
\end{equation}
where $c_1^*$ and $c_2^*$ are absolute constants  provided in Lemmas~\ref{lem:bound_stoch_term1} and~\ref{lem:bound_stoch_term2}.

We consider the following estimator of $\Theta$ within the interval $[t_1,t_2]$:
\begin{equation}\label{eq:Theta_hat}
\widehat \Theta\in\argmin_{M\in\mathbb{R}^{p\times p}} \varphi(\mX_{[t_1,t_2]},M).
\end{equation}

Estimator \eqref{eq:sdp1} was first introduced in \citep{negahban2011}.
In the case of large number of observations $t_2-t_1\ge p$, with no change in the matrix of parameter, ~\cite{negahban2011} prove the  estimator consistency in the Frobenius norm (see Proposition~\ref{cor:bound_risk_estimation2}). Note that estimator provided by~\eqref{eq:Theta_hat} is also consistent in the operator norm, see Proposition~\ref{thm:operator_norm} in the Appendix.

\subsection{Testing procedure}
For any $t\in \cpset_T$,  we  will estimate the transition matrix on the intervals  before and after the time $t$, $[0,t)$ and $[t,T)$ using, respectively, the observations $\mX_{\le t}$ and $\mX_{\ge t}$. These estimators are obtained as the solutions of the SDP~\eqref{eq:sdp1} or \eqref{eq:sdp2} depending on the proximity of the point $t$ to the endpoints of the observation interval $[0,T]$.
Denote 
$$
\varphi_1^t(\mX,M)= \varphi(\mX_{[0,t]},M)\quad \mbox{and}\quad \varphi_2^{t}(\mX,M)= \varphi(\mX_{[t,T]},M).
$$
Then the optimization programs for estimation of $\Theta_1$ and $\Theta_2$ can be written as 
\begin{equation}
\widehat{\Theta}_1^t\in\argmin_{M\in\mathbb{R}^{p\times p}} \varphi_1^t(\mX,M) \label{eq:Theta_hat1}
\end{equation}
and 
\begin{equation}
\widehat{\Theta}_2^{t}\in\argmin_{M\in\mathbb{R}^{p\times p}} \varphi_2^{t}(\mX,M)\label{eq:Theta_hat2}
\end{equation}
where 
$$
\varphi_1^t(\mX,M)= \frac1t \| \mathfrak{Y}_{\le t}-M\mX_{<t}\| _{2}^{2}+\lambda_1^t\Bigl(\| M\mX_{<t}\| _{*}\one_{\{t\le  p\}}+ \| M\|_* \one_{\{t> p\}}\Bigr)
$$
and 
$$
\varphi_2^t(\mX,M)= \frac1{T-t}\| \mathfrak{Y}_{> t}-M\mX_{\ge t}\| _{2}^{2}+\lambda_2^t\Bigl(\| M\mX_{\ge t}\| _{*}\one_{\{T-t\le p\}}+ \| M\|_* \one_{\{T-t> p\}}\Bigr)
$$
with the penalties 
\begin{align}
\lambda_1^t&:= 6c_1^* \sqrt{\| \Sigma_{Z}\|_{\mathrm{op}}}  \frac {\sqrt p} {t}\one_{\{t\leq p\}}+ 2c_2^* \frac{\| \Sigma_1\|_{\mathrm{op}}}{1-\gamma_1} \sqrt{\frac {p}{t}}\one_{\{t> p\}}\label{choice_lambda1}\\
\lambda_2^t&:= 6c_1^* \sqrt{\| \Sigma_{Z}\|_{\mathrm{op}}}  \frac {\sqrt p} {T-t}\one_{\{T-t\leq p\}}+ 2c_2^* \frac{\| \Sigma_2\|_{\mathrm{op}}}{1-\gamma_2} \sqrt{\frac {p}{T-t}}\one_{\{T-t> p\}}.\label{choice_lambda2}
\end{align}
Finally we define the following test statistic:
\begin{equation}\label{eq:TestStat}
\mathcal{F}(t)= \left (\varphi^t_1(\mX,\widehat \Theta_2^{t})
- \varphi^t_1(\mX,\widehat \Theta_1^t)\right )\one_{\{t< T/2\}} 
+  \left (\varphi_2^{t}(\mX,\widehat \Theta_1^t)
- \varphi_2^{t}(\mX,\widehat \Theta_2^{t})\right )\one_{\{t\geq T/2\}}.
\end{equation}
Let $\mathcal T$ be a subset of $\cpset_T$ that approximates the set of possible change-point locations.  In case of testing against a simple alternative of change at a given point $\tau$, we take $\mathcal T=\cpset_T=\{\tau\}$. In case of a composite alternative, we can choose $ \mathcal T=\cpset_T$  or use an appropriate grid on $\cpset_T$.  For  a given significance level $\alpha\in (0,1)$ we introduce the test 
\begin{equation}\label{eq:test}
\psi_{\alpha}(\mX)=\one\Biggl\{ \max_{t\in \mathcal T} \frac{\mathcal F(t)}{H_{\alpha,t}}>1 \Biggr\}
\end{equation}
where $ H_{\alpha,t}$ is the threshold defined as follows:
\begin{equation}\label{def:test_threshold}
H_{\alpha,t}=\begin{cases}
C^*  \frac{Rp}{Tq^{2}(t/T)}  \Bigl(1+\frac2{1-\gamma}\frac{p+\log(8|\mathcal T|/\alpha)}{T}\Bigr),& t\wedge (T-t)\le p\\
C^*  \frac{Rp}{Tq^{2}(t/T)}  \Biggl(1+\frac2{1-\gamma}\left(1+\frac{\log(8|\mathcal T|/\alpha)} {T}\right)\Biggr),& t\wedge (T-t)> p\\
\end{cases}
\end{equation}
with $C^*=C\max \left( \| \Sigma_Z\|_{\mathrm{op}},\| \Sigma\|_{\mathrm{op}} \dfrac{ \kappa^{2}(\Sigma)}{(1-\gamma)^2} \right)$ for an absolute constant $C$. Here $\Sigma$ is the covariance matrix of the VAR process under the null.
We can estimate its operator norm from the Lyapunov equation~\eqref{eq:lyap} as $\|\Sigma\|_{\mathrm{op}}\le \|\Sigma_Z\|_{\mathrm{op}}/(1-\gamma^2)$ and its condition number as $\kappa(\Sigma)\le \kappa(\Sigma_Z)\frac{1+\gamma^2}{1-\gamma^2}$ using Lemma~\ref{lem:stab_lyapunov}.

\begin{theorem}\label{th:UpperBound}
	Let $\alpha,\beta  \in (0,1)$ be given significance levels such that $p>C \log\left (\dfrac{|\mathcal T|}{\alpha\beta}\right )$ for some absolute constant $C>0$. Let $p\le T/2$ and $(\Theta_1,\Theta_2)\in \mathcal M_p^{\otimes2}(R,\gamma) $ be a  couple of transition matrices with the change-point energy satisfying
	\[
	\mathcal E^{2}(\tau,\Theta_1,\Theta_2)\geq \Xi  \frac{ Rp}T \left(1+\frac{1}T \log\left (\dfrac{64|\mathcal T|}{\alpha\beta}\right )\right),
	\] 
	where the constant $\Xi=\Xi (\Sigma_1,\Sigma_2,\Sigma,\gamma)$ is defined in~Lemma~\ref{lem:typeIIerror}.	
	Then, the $\alpha$-level test $\psi_{\alpha}$ defined in \eqref{eq:test} has the type II error smaller than $\beta$. 
\end{theorem}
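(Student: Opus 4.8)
The goal is to bound the type II error of $\psi_\alpha$, i.e., to show that under the alternative, with probability at least $1-\beta$, the statistic $\mathcal F(\tau)$ exceeds the threshold $H_{\alpha,\tau}$. Since $\psi_\alpha$ takes a maximum over $t\in\mathcal T$, it suffices to exhibit a single good $t$; the natural choice is the true change-point $\tau$ (or the closest grid point to it, absorbing the discrepancy into the energy condition). The plan is therefore to obtain a high-probability \emph{lower} bound on $\mathcal F(\tau)$ and to compare it with $H_{\alpha,\tau}$.

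First I would decompose $\mathcal F(\tau)$. Consider the case $\tau<T/2$ (the case $\tau\ge T/2$ is symmetric, using the second branch of \eqref{eq:TestStat} with the interval $[\tau,T]$). There, on the segment after $\tau$ the process is genuinely stationary with transition matrix $\Theta_2$, so $\widehat\Theta_2^{\,\tau}$ estimates $\Theta_2$ well; the key is that $\varphi_1^\tau(\mX,\widehat\Theta_2^{\,\tau})-\varphi_1^\tau(\mX,\widehat\Theta_1^{\,\tau})$ is large because $\widehat\Theta_1^{\,\tau}$ is a good fit on $[0,\tau]$ (where the true matrix is $\Theta_1$) while $\widehat\Theta_2^{\,\tau}\approx\Theta_2$ is a \emph{bad} fit there, the misfit being controlled from below by $\|(\Theta_1-\Theta_2)\mX_{<\tau}\|_2^2/\tau$, i.e.\ roughly $\|\Theta_1-\Theta_2\|_2^2\,\sigma_{\min}(\Sigma_1)$ in expectation. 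Writing $\mathfrak Y_{\le\tau}=\Theta_1\mX_{<\tau}+\mathfrak Z_{\le\tau}$ and expanding the squared Frobenius norms, the leading term is precisely $\frac1\tau\|(\Theta_1-\widehat\Theta_2^{\,\tau})\mX_{<\tau}\|_2^2-\frac1\tau\|(\Theta_1-\widehat\Theta_1^{\,\tau})\mX_{<\tau}\|_2^2$ plus cross terms with the noise $\mathfrak Z_{\le\tau}$ and plus the difference of the nuclear-norm penalties. The cross terms are stochastic and must be controlled by the concentration lemmas (Lemmas~\ref{lem:bound_stoch_term1}, \ref{lem:bound_stoch_term2}) that govern $\|\mathfrak Z\mX^\T\|_{\op}$ — the same quantities that dictate the choice of $\lambda_1^\tau,\lambda_2^\tau$; the penalty differences are handled with the triangle inequality and the rank-$R$ bound $\|M\|_*\le\sqrt R\,\|M\|_2$ together with $\|\widehat\Theta_j^{\,\tau}-\Theta_j\|_2\lesssim\sqrt{Rp/(\cdot)}$ from the Frobenius-consistency results (Proposition~\ref{cor:bound_risk_estimation2}), so that the dominant contribution is the ``signal'' term $\asymp q^2(\tau/T)\|\Theta_1-\Theta_2\|_2^2/T$ up to the covariance constants folded into $\Xi$.

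Then I would assemble the pieces: on an event of probability $\ge 1-\beta$ (obtained by a union bound over the finitely many terms that need concentration, which is where the $\log(|\mathcal T|/(\alpha\beta))$ and, via the estimation error bounds, the additive $p/T$ corrections enter), one gets
\[
\mathcal F(\tau)\;\gtrsim\;c_1\,\mathcal E^2(\tau,\Theta_1,\Theta_2)\;-\;c_2\,\frac{Rp}{T}\Bigl(1+\tfrac1T\log\tfrac{|\mathcal T|}{\alpha\beta}\Bigr),
\]
with constants depending on $\Sigma_1,\Sigma_2,\Sigma,\gamma$. The energy hypothesis $\mathcal E^2(\tau,\Theta_1,\Theta_2)\ge\Xi\frac{Rp}{T}(1+\frac1T\log\frac{64|\mathcal T|}{\alpha\beta})$ is exactly calibrated so that, with $\Xi$ chosen as in Lemma~\ref{lem:typeIIerror}, the right-hand side exceeds $H_{\alpha,\tau}$ as given in \eqref{def:test_threshold} (note $H_{\alpha,\tau}$ has the matching shape $C^*\frac{Rp}{Tq^2(\tau/T)}(1+\cdots)$, and the factor $q^2(\tau/T)$ cancels against the $q^2$ inside $\mathcal E^2$). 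Hence $\mathcal F(\tau)/H_{\alpha,\tau}>1$ on that event, so $\psi_\alpha(\mX)=1$ there and the type II error is at most $\beta$.

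The main obstacle is the lower bound on the misfit term $\frac1\tau\|(\Theta_1-\widehat\Theta_2^{\,\tau})\mX_{<\tau}\|_2^2$: one must show that replacing $\widehat\Theta_2^{\,\tau}$ by the population value $\Theta_2$ costs only a lower-order term, and that $\frac1\tau\|(\Theta_1-\Theta_2)\mX_{<\tau}\|_2^2$ concentrates around $\|(\Theta_1-\Theta_2)\Sigma_1^{1/2}\|_2^2\ge\sigma_{\min}(\Sigma_1)\|\Theta_1-\Theta_2\|_2^2$, uniformly enough to survive the subtraction of the (smaller, but not negligible when $\tau$ is small) good-fit term on the $[0,\tau]$ side; the delicate point is that no lower bound on $\tau$ is assumed, so the concentration of the Gram matrix $\mX_{<\tau}\mX_{<\tau}^\T/\tau$ toward $\Sigma_1$ can be poor, and one must instead argue directly at the level of the $\varphi_1^\tau$ objective using its minimization property together with the stochastic-term control — this is presumably the content of Lemma~\ref{lem:typeIIerror}, and I would isolate it as such rather than proving it inline.
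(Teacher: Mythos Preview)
Your plan is essentially the paper's strategy: reduce to $t=\tau$, expand $\mathcal F(\tau)$, isolate the signal $\|(\Theta_1-\Theta_2)\mX\|_2^2$ on the short interval, control the noise cross-terms via the same stochastic bounds that calibrate $\lambda_1^\tau,\lambda_2^\tau$, and use Frobenius consistency on the long interval. You also correctly defer the core inequality to Lemma~\ref{lem:typeIIerror}.

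One refinement worth noting: the paper applies the minimizer property \emph{at the very first step}. In the case $\tau\ge T/2$ it writes
\[
\mathcal F(\tau)=\varphi_2^\tau(\mathfrak X,\widehat\Theta_1^{\,\tau})-\varphi_2^\tau(\mathfrak X,\widehat\Theta_2^{\,\tau})\;\ge\;\varphi_2^\tau(\mathfrak X,\widehat\Theta_1^{\,\tau})-\varphi_2^\tau(\mathfrak X,\Theta_2),
\]
so the short-interval estimator $\widehat\Theta_2^{\,\tau}$ disappears entirely and one never needs to bound $\|\widehat\Theta_2^{\,\tau}-\Theta_2\|_2$. In your symmetric case $\tau<T/2$ this becomes $\mathcal F(\tau)\ge\varphi_1^\tau(\mathfrak X,\widehat\Theta_2^{\,\tau})-\varphi_1^\tau(\mathfrak X,\Theta_1)$, eliminating $\widehat\Theta_1^{\,\tau}$. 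This is cleaner than your initial decomposition, which carries both estimators and only invokes the minimization property at the end as a rescue for the ``delicate point''; using it up front also makes clear that the Gram-matrix concentration issue you worry about is handled not through the objective's minimization property but through Lemma~\ref{lem:low_bound_sampling_operator} (a Hanson--Wright lower bound on $\frac1n\|A\mX_n\|_2^2$ valid even for small $n$, under a mild condition implied by the energy hypothesis).

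Finally, the theorem also asserts that $\psi_\alpha$ has level $\alpha$; the paper's proof invokes Lemma~\ref{lem:typeIerror} for this, and you should mention it even if you regard it as the easier half.
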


\begin{proof}
	The proof of Theorem \ref{th:UpperBound} is based on Lemmas~\ref{lem:typeIerror} and~\ref{lem:typeIIerror}. Lemma~\ref{lem:typeIerror} implies that $\alpha(\psi_{\alpha})\le\alpha$. By Lemma~\ref{lem:typeIIerror}, the type II error smaller than $\beta$ is guaranteed if $R_{p, \cpset_T}$ satisfies~(\ref{eq:separation_rate})
	which completes the proof of the theorem.
\end{proof}
\begin{remark}
	Note that the detection rate is of the order $\sqrt{Rp/T}$ in the case of testing at a given point $\tau$ and when $T>\log\left (\dfrac{T}{\alpha\beta}\right )$ in the case of undefined change-point location with $\mathcal T=\cpset_T=\{1,\dots,T-1\}$. For smaller $T$,
	we can replace $\log T$ by $\log\log T $  by taking the dyadic grid   (see, for example, \citep{LiuGaoSamworth2021}) defined in the following way:  $\mathcal T=\mathcal T^L \cup \mathcal T^R$, where 
	\begin{equation}\label{eq:dyadic_grid}
	\mathcal T^L=\Bigl\{2^{k},\  k=0,\dots, \lfloor \log_2 (T/2)\rfloor  \},\quad \mathcal T^R= \Bigl\{T-2^{k},\  k=0,\dots, \lfloor \log_2 (T/2)\rfloor  \Bigr\}.
	\end{equation}
\end{remark}

\begin{remark}
	The test defined in~\eqref{eq:test} detects the change-points  $\tau$ located in any time point. If we know that the change-point belongs to the interval $(p,T-p)$, where the consistent estimation of the transition matrices is possible, then, we can define a  penalized likelihood ratio statistic defined as 
	\begin{equation}\label{eq:Gstat}
	\mathcal G(t)=\frac t{T} \left (\varphi^t_1(\mX,\widehat \Theta_2^{t})
	- \varphi^t_1(\mX,\widehat \Theta_1^t)\right )+ \frac{T-t}T   \left (\varphi_2^{t}(\mX,\widehat \Theta_1^t)
	- \varphi_2^{t}(\mX,\widehat \Theta_2^{t})\right ).
	\end{equation}
	We can show, using exactly the same technique as in Lemmas~\ref{lem:typeIerror} and~\ref{lem:typeIIerror} that the procedure will detect the change-point with the same detection rate  $Rp/T$ up to the logarithmic term and with a slightly different  constant. 
\end{remark}

We have the following optimality result on the minimal detectable change-point energy. 
\begin{theorem}\label{th:LowerBoundTesting}
	Let $\alpha\in (0,1)$ be given significance level. Assume that the change-point location satisfies the following condition: $$\min(\tau/T,1-\tau/T)\ge h_*>0.$$
	Let the change-point energy $\mathcal E(\tau,\Delta\Theta)=q(\tau/T) \|\Delta\Theta\|_2$ satisfy
	\begin{enumerate}
		\item[(a)]
		$\lim\limits_{p,T\to\infty}  \mathcal E(\tau,\Theta_1,\Theta_2) \Bigl(\frac{T}{p}\Bigr)^{1/2} =0$  if $p\lesssim \sqrt T$
		\item[(b)] $
		\lim\limits_{p,T\to\infty}  \mathcal E(\tau,\Theta_1,\Theta_2) T^{1/4}=0
		$ if  $p\gtrsim \sqrt T$.
	\end{enumerate}
	Then, the type II error  of any $\alpha$-level test $\psi_\alpha$ satisfies $\liminf\limits_{p,T\to\infty} \beta(\psi) \ge 1-\alpha$ and the lower bound on the testing rate is
	$\mathcal R_{p,\tau}=o\Bigl(\sqrt{\frac pT}\wedge \frac{1}{T^{1/4}}\Bigr)$.
\end{theorem}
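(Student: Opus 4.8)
The plan is to establish the lower bound by a standard two-point (or mixture-over-two-points) reduction, showing that when the change energy is below the claimed rate, no test can separate the null from a suitable alternative. The first step is to fix a convenient instance of the null: take $\Theta_1=\Theta_2=0$, so that under $\Hyp_0$ the observations $X_1,\dots,X_T$ are i.i.d.\ $\cN_p(0,\Sigma_Z)$ (with $X_0\sim\cN_p(0,\Sigma)$). Then I would construct an alternative with a single change at $\tau$ by choosing $\Theta_1=0$ and $\Theta_2=\Delta$, where $\Delta$ is a rank-one (hence rank $\le R$) matrix with $\|\Delta\|_{\op}$ small enough to respect Assumption~\ref{ass:stability}; the change energy is then $\mathcal E(\tau,\Theta_1,\Theta_2)=q(\tau/T)\|\Delta\|_2=q(\tau/T)\|\Delta\|_{\op}$. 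Under the condition $\min(\tau/T,1-\tau/T)\ge h_*$ the factor $q(\tau/T)$ is bounded above and below by constants, so controlling $\mathcal E$ is equivalent (up to constants) to controlling $\|\Delta\|_2$.

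The second step is to bound the total variation (or, more conveniently, the Kullback–Leibler or chi-square) distance between the joint law $\P_0$ of $\mX=(X_0,\dots,X_T)$ under the null and the joint law $\P_\Delta$ under this alternative. Factorizing the likelihood along the Markov chain, the two laws agree on $X_0,\dots,X_\tau$ (since $\Theta_1=0$ in both), and on the remaining block the increments $X_{t+1}\mid X_t$ are $\cN_p(0,\Sigma_Z)$ under $\P_0$ versus $\cN_p(\Delta X_t,\Sigma_Z)$ under $\P_\Delta$, for $t=\tau,\dots,T-1$. Hence
\begin{equation*}
\mathrm{KL}(\P_\Delta\,\|\,\P_0)=\frac12\,\E_{\Delta}\!\left[\sum_{t=\tau}^{T-1}\|\Sigma_Z^{-1/2}\Delta X_t\|_2^2\right]
=\frac12\sum_{t=\tau}^{T-1}\tr\!\big(\Sigma_Z^{-1}\Delta\,\Sigma^{(t)}\,\Delta^{\T}\big),
\end{equation*}
where $\Sigma^{(t)}=\E_\Delta[X_tX_t^{\T}]$ is the (bounded) covariance of the post-change process. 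Using $\|\Delta\|_{\op}<1$ this sum is $\le C\,(T-\tau)\,\|\Delta\|_2^2\,\|\Sigma^{(t)}\|_{\op}\|\Sigma_Z^{-1}\|_{\op}\lesssim T\|\Delta\|_2^2$, with the constant depending only on $\gamma$ and the covariance operator norms. So $\mathrm{KL}(\P_\Delta\|\P_0)\lesssim T\|\Delta\|_2^2\asymp T\,\mathcal E^2(\tau,\Theta_1,\Theta_2)$. By Pinsker's inequality and the standard testing lower bound ($\beta(\psi)\ge 1-\alpha-\sqrt{\mathrm{KL}/2}$ for any $\alpha$-level test), whenever $T\,\mathcal E^2\to0$ we get $\liminf\beta(\psi)\ge1-\alpha$; this already yields the $T^{-1/4}$ part of the rate, i.e.\ case~(b).

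The third step is to sharpen to the $\sqrt{p/T}$ rate in regime~(a) ($p\lesssim\sqrt T$), which requires using the dimension. Here I would replace the single alternative by a uniform mixture over a large packing of rank-one perturbations: let $\P_\pi=\frac1{|\mathcal S|}\sum_{\Delta\in\mathcal S}\P_\Delta$ with $\mathcal S=\{\rho\,uv^{\T}:u,v\in\{\pm1/\sqrt p\}^p\}$ (or $u,v$ ranging over an orthonormal-ish family), so $\|\Delta\|_2=\rho$ for every element and $\mathcal E\asymp\rho$. Then I would bound $\chi^2(\P_\pi\,\|\,\P_0)+1=\E_{\Delta,\Delta'}\!\left[\exp\big(\langle\!\langle \text{something}\rangle\!\rangle\big)\right]$ by computing the pairwise likelihood-ratio inner products $\E_0[(d\P_\Delta/d\P_0)(d\P_{\Delta'}/d\P_0)]$; for Gaussian VAR increments these are of the form $\exp\!\big(c\sum_t\tr(\Delta^{\T}\Sigma_Z^{-1}\Delta'\,\Sigma^{(t)})\big)$ up to lower-order corrections, and the cross term scales like $\rho^2\langle u,u'\rangle\langle v,v'\rangle$. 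Averaging over independent sign vectors makes $\langle u,u'\rangle\langle v,v'\rangle$ a product of two mean-zero sums, so the exponential moment is controlled as long as $T\rho^2/p=o(1)$, i.e.\ $\mathcal E^2=o(p/T)$; this gives $\chi^2(\P_\pi\|\P_0)\to0$ and hence the indistinguishability conclusion. I expect this averaging computation — carefully handling the fact that $\Sigma^{(t)}$ itself depends on $\Delta$ and is not exactly $\Sigma_Z$, and keeping the exponent uniformly bounded over the packing — to be the main technical obstacle; a clean workaround is to first condition on (or truncate) the event that all $\|X_t\|$ are $O(\sqrt p)$, on which the relevant quadratic forms are uniformly controlled, and bound the contribution of the complement by a crude union bound.
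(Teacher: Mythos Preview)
Your overall architecture (null $\Theta^t\equiv0$, chi-square/second-moment bound, and a Rademacher mixture over low-rank perturbations) is the same as the paper's. However, your handling of case~(b) is incorrect, and the mistake propagates into how you treat the mixture computation.

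The two-point KL bound you derive gives $\mathrm{KL}(\P_\Delta\Vert\P_0)\lesssim T\,\mathcal E^2$, so Pinsker yields indistinguishability only when $T\mathcal E^2\to0$, i.e.\ $\mathcal E=o(T^{-1/2})$. This is \emph{strictly stronger} than the hypothesis of case~(b), which is $\mathcal E\,T^{1/4}\to0$, i.e.\ $\mathcal E=o(T^{-1/4})$. So your second step does not yield the claimed $T^{-1/4}$ rate; it only gives a weaker $T^{-1/2}$ lower bound. No single-alternative argument (KL or chi-square) can do better here, because for a fixed $\Delta$ the leading term in $\log\E_0[(d\P_\Delta/d\P_0)^2]$ is of order $T\|\Delta\|_2^2$.

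The $T^{-1/4}$ rate in the paper arises \emph{only} through the mixture chi-square, and precisely from the terms you label ``lower-order corrections.'' Concretely, after writing $\E_0[(d\P_\Delta/d\P_0)(d\P_{\Delta'}/d\P_0)]$ as a Gaussian integral, the paper evaluates it as $\det(I+M)^{-1/2}$ for a block-tridiagonal $M$ and expands via a Rump-type bound $\det(I+A)\ge\exp\{\tr A-\tfrac12\|A\|_2^2/(1-\|A\|_{\op})\}$. The trace part produces the bilinear term $Tq^2(\tau/T)\,\tr(\Omega\tilde\Omega^{\T})$ that the Rademacher average kills (this is the $\sqrt{p/T}$ mechanism you outline). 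But the Frobenius correction contributes an extra factor $\exp\{c\,Tq^2(\tau/T)\,\delta^4\}$ outside the average, and controlling this factor is what forces $\delta=o(T^{-1/4})$. In regime~(a) this constraint is harmless because $\delta^2=o(p/T)$ and $p\lesssim\sqrt T$ automatically give $T\delta^4=o(1)$; in regime~(b) it is the binding constraint. Your sketch suppresses exactly this term, and your proposed workaround (truncating to $\|X_t\|=O(\sqrt p)$) does not help: the issue is algebraic, not a tail event. You need to carry out the block-tridiagonal determinant computation (or an equivalent) and keep the quadratic remainder explicitly.

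A smaller point: the paper takes $\Theta_1=-(1-\tau/T)\Delta\Theta$ and $\Theta_2=(\tau/T)\Delta\Theta$ rather than your $\Theta_1=0,\ \Theta_2=\Delta$; this symmetric choice makes the $q(\tau/T)$ factor emerge cleanly from the identity $(T-\tau)(\tau/T)^2+\tau(1-\tau/T)^2=Tq^2(\tau/T)$, though your parametrization would also work with constants depending on $h_*$.
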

For the matrices of dimension $p\lesssim \sqrt T$, Theorems~\ref{th:UpperBound} and~\ref{th:LowerBoundTesting} imply that the minimal detectable energy satisfies the condition
$$
\sqrt{\frac pT}\lesssim \mathcal E(\tau,\Theta_1,\Theta_2)\lesssim \sqrt{\frac{Rp}{T}}
$$
and our testing procedure is minimax rate optimal up to a possible loss of order $\sqrt{R}$ and a $\log$ term.

\begin{remark}
	We can construct a change-point estimator based on he statistic $\mathcal G(t)$ defined in~\eqref{eq:Gstat}: $$\widehat\tau=\argmax\limits_{t\in\cpset_T} \mathcal G(t).$$ The consistency of the change-point localization can be proven under the condition that $\mathcal E(\tau,\Theta_1,\Theta_2)\gtrsim \sqrt{Rp(\log T)/T}$.  Note that using the same technique as in Theorem~\ref{th:LowerBoundTesting}, it can be shown that the consistent estimation is impossible if  $\mathcal E(\tau,\Theta_1,\Theta_2) \lesssim \sqrt{p/T}$ for $p\lesssim \sqrt T$. 
\end{remark}

\section{Simulations}

We suppose that an eventual change-point is located within the interval $[Th,T-Th]$, where $p/T<h<1/2$ is given. The process is stationary within the intervals $[1,Th]$ and $[T-Th,T]$  that will be used for calibration of the quantiles and estimation of the transition matrices.  We have implemented the testing procedure based on the test statistic $\mathcal G(t)$ defined in~\eqref{eq:Gstat}
$$
\mathcal G(t)=\frac{t}T \Bigl(\varphi_1^t( {\mathfrak X},\widehat\Theta_2^t)-\varphi_1^t( {\mathfrak X},\widehat\Theta_1^t)\Bigr) + \frac{T-t}T\Bigl(\varphi_2^t({\mathfrak X},\widehat\Theta_1^t)-\varphi_2^t({\mathfrak X},\widehat\Theta_2^t)\Bigr)
$$
with the corresponding test
$$
\psi_{\alpha}(\mathfrak X)=\one\left\{\max_{t\in \mathcal T} \frac{\mathcal G(t)}{q_{\alpha,t}}>1\right\},
$$
where $\mathcal T$ is a grid approximating the set of possible change-points  $\cpset_T\subseteq[Th, T-Th]$.
\begin{figure}[htbp!]
	\begin{minipage}{0.48\linewidth}	
		\centering
		\includegraphics[width=\linewidth]{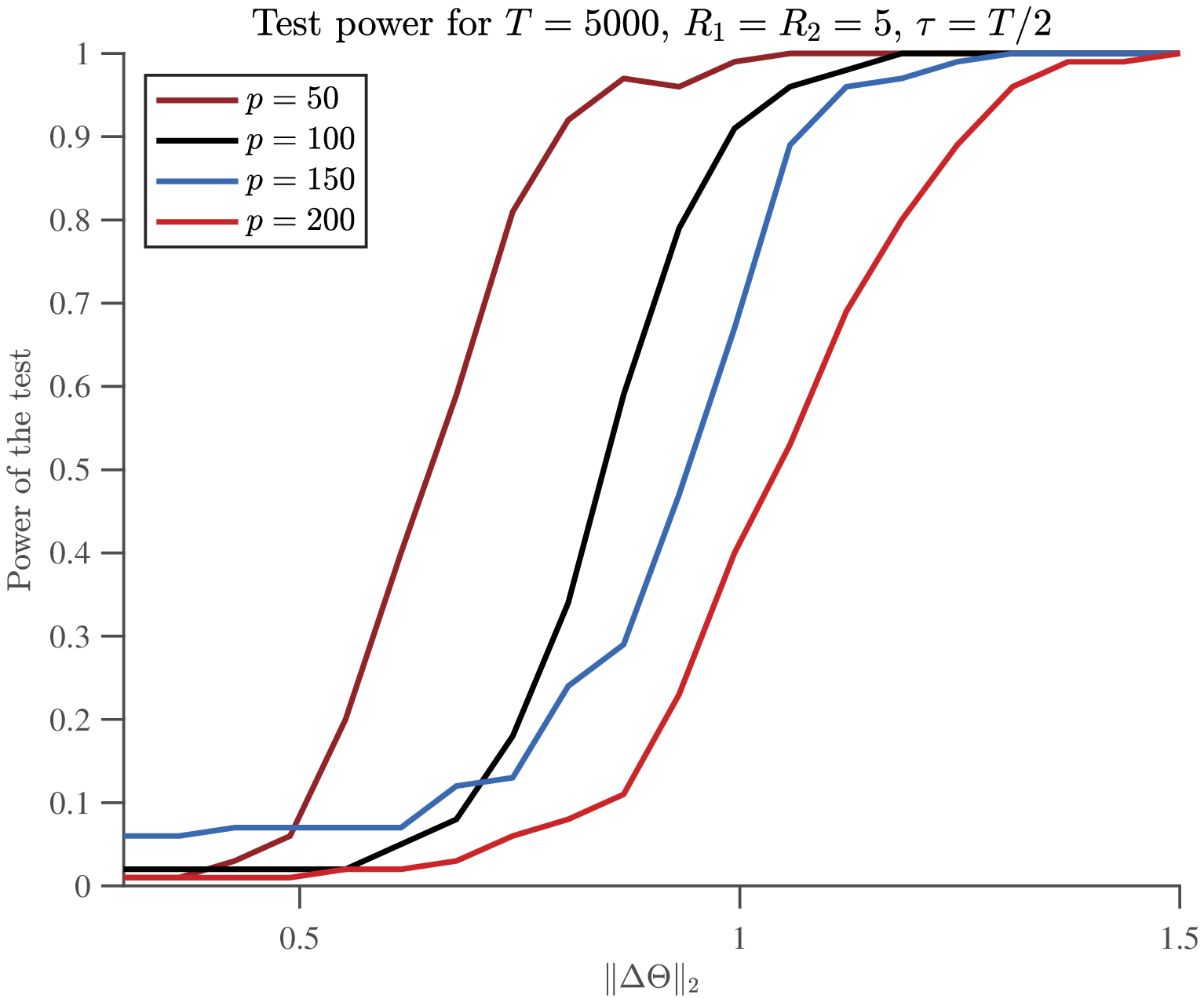}
		\caption{Test power depending on the dimension of transition matrices}
		\label{fig:power-p}
	\end{minipage}
	\hfill
	\begin{minipage}{0.48 \linewidth}
		\centering
		\includegraphics[width=\linewidth]{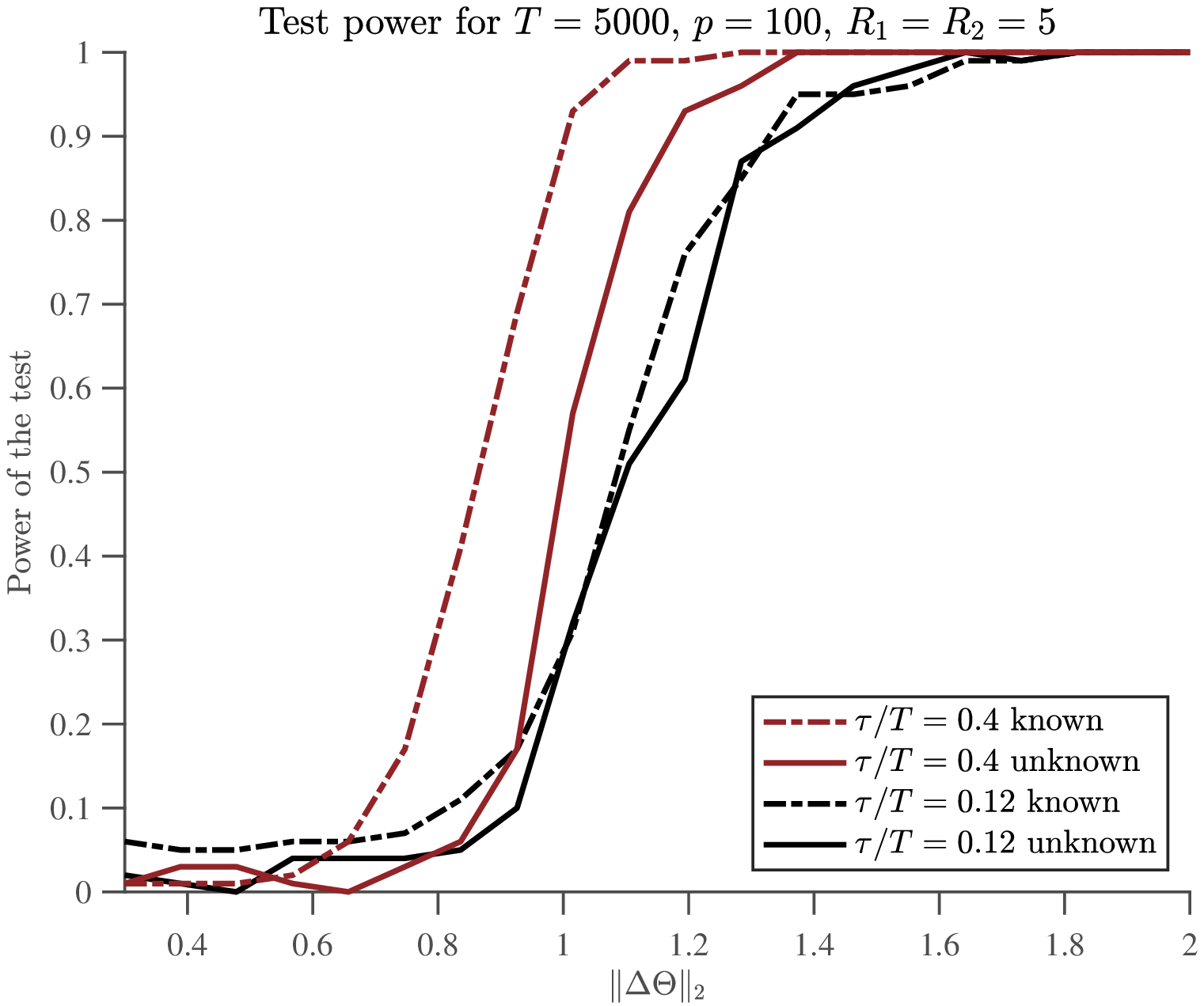}
		\caption{Test power for known or unknown location of the change-point}
		\label{fig:power-tau-dyadic}
	\end{minipage}
\end{figure}

To calculate the test statistic  $\mathcal G(t)$ we need to find the solutions $\widehat \Theta_1^t$ and $\widehat\Theta_2^t$ of the SDPs~\eqref{eq:Theta_hat1} and \eqref{eq:Theta_hat2}. The estimation quality of the transition matrices  depends on the choice of constants in the regularization parameters $\lambda_1^t:=\tilde c_1\sqrt{p/t}$ and $\lambda_2^t:=\tilde c_2 \sqrt{p/(T-t)}$. 
We tune the constants $\tilde c_i$ by cross-validation in the following way. We estimate the transition matrices $\Theta_1$ and $\Theta_2$ using, respectively, the first and the last  $\lfloor \delta Th\rfloor$ observations $X_1,\dots,X_{\lfloor \delta Th\rfloor}$ and  $X_{T-\lfloor Th\rfloor},\dots,X_{T-\lfloor (1-\delta)Th\rfloor}$, where $\delta\in(0,1)$ is chosen a priori.  Let $\widehat\Theta_{1,\tilde c_1}$ and $\widehat\Theta_{2,\tilde c_2}$ be the corresponding solutions of  SDPs~\eqref{eq:Theta_hat1}  and  \eqref{eq:Theta_hat2} with the regularization constants $\tilde c_1$ and $\tilde c_2$ varying within some appropriately chosen grid. The optimal constants $\tilde c_1^*$ and $\tilde c_2^*$ will minimize the least-squares criteria for $\widehat\Theta_{1,_{\tilde c_1}}$ and $\widehat\Theta_{2,\tilde c_{2}}$  calculated over the interval of size $(1-\delta)Th$ :
$$
\tilde c_1^*= \argmin_{\tilde c_1\in \mathrm{grid}}  \sum_{t=\lfloor \delta Th\rfloor}^{\lfloor Th\rfloor-1}\|X_{t+1}-\widehat \Theta_{1,\tilde c_1}X_t\|_2^2,\quad  \tilde c_2^* = \argmin_{\tilde c_2\in \mathrm{grid}}  \sum_{t=T-\lfloor (1-\delta) Th\rfloor}^{T-1}\|X_{t+1}-\widehat \Theta_{2,\tilde c_2}X_t\|_2^2.
$$
To obtain the plug-in estimates $\widehat \Theta_1^t$ and $\widehat \Theta_2^t$ used in $\mathcal G(t)$, we take the regularization parameters $\lambda_1^t$ and $\lambda_2^t$ with the constants $\tilde c_1^*$ and $\tilde c_2^*$. The numerical solution to the SDPs is calculated using the accelerated gradient decent algorithm of~\cite{Ji2009AnAG}.

The theoretical quantile of the change-point detection procedure depends on an unknown universal constant. In this study we use Monte-Carlo simulated quantiles $q_{\alpha,t}$. The details about the quantile simulation are provided in the Appendix. 

We have performed 100 simulations of $T=5000$ observations of  $\mathrm{VAR}_p(\Theta_1,\Theta_2,\Sigma_Z)$ process  with independent noise, $\Sigma_Z=\id_p$ and transition matrices $\Theta_1$ and $\Theta_2$ of the same rank $R_1=R_2=5$. The significance level $\alpha$ is fixed to 0.05. We have chosen the intervals of size $Th=5p$ for the estimation of  matrices $\Theta_1$ and $\Theta_2$ with $\delta=4/5$ for the constant  calibration interval. We consider the problems of testing for a change-point at a given point $\tau\in \cpset_T$ and at an unknown point within the set $\cpset_T$.  In the second case the dyadic grid defined  in~\eqref{eq:dyadic_grid} was chosen as $\mathcal T$.  

Fig.~\ref{fig:power-p} shows the dependence of the test power on the Frobenius norm of the change in transition matrices, $\|\Delta\Theta\|_2$, for  different values of $p$ varying  from $50$ to $200$. Here we test at a given point located in the middle. We see that the power decreases if the dimension increases and the detection problem  becomes harder for the matrices of larger dimension. This  confirms our theoretical detection rate  $C\sqrt{Rp/T}$. In Fig.~\ref{fig:power-tau-dyadic} we compare  our test's performance for a given change-point versus an unknown change-point. In this simulation the dimension is fixed, $p=100$. The bold line corresponds to the test power within the dyadic grid for the case of unknown change-point location. We see that the adaptive test performs quite well with respect to  testing at a known change-point location. We can also see the impact of the change-point location to the detection rate: the detection is harder near the boundary of $\cpset_T$ and easier in the middle of the interval. Additional simulation results can be found in the Appendix.

\bibliography{references}

\begin{appendix}
	\section{Definitions from minimax testing theory}
	
	Let $\mX=(X_0,\dots,X_T)$ be observed data satisfying $\mathrm{VAR}_p(\Theta_1,\Theta_2,\Sigma_Z)$ model~(\ref{eq:var}) with the change at point $\tau$ and the transition matrices $\Theta_1$ and $\Theta_2$ before and after the change. Denote by $\P^\tau_{\Theta_1,\Theta_2}$ the distribution of $\mX$. Let  $\psi:\mX\to \{0,1\}$ be a test for the presence of a change. Define the testing errors of $\psi(\mX)$. The type I error is given by 
	$$
	\alpha(\psi)=\sup_{(\Theta_1,\Theta_2)\in\mathcal V_{p,0}}  \P_{\Theta_1,\Theta_2}\{\psi(\mX)=1\},
	$$
	the type II error of testing at the given point $\tau$ is defined as 
	$$
	\beta(\psi,\mathcal R_{p,T}^\tau)=\sup_{(\Theta_1,\Theta_2)\in\mathcal V_{p,\tau}(\mathcal R_{p,T}^\tau)} \P^\tau_{\Theta_1,\Theta_2}\{\psi(\mX)=0\},
	$$
	and the type II error of testing at an unknown change-point $\tau\in\cpset_T$ is defined as 
	$$
	\beta(\psi,\mathcal R_{p,T})=\sup_{\tau\in\cpset_T}\sup_{(\Theta_1,\Theta_2)\in\mathcal V_{p,\tau}(\mathcal R_{p,T})} \P^\tau_{\Theta_1,\Theta_2}\{\psi(\mX)=0\}.
	$$
	Let $\alpha\in (0,1)$ be a given significance level. Denote by $\Psi_\alpha$  the set of all tests of level at most $\alpha$:
	$$
	\Psi_\alpha=\left\{\psi:\ \alpha(\psi)\le \alpha \right\}.
	$$

	It is important to know what are the conditions on the jump matrix $\Delta\Theta$ and the radius $\mathcal R_{p,T}$ that allow to detect the change-point with a given significance level and reasonable type II error. These conditions are formulated in terms of the minimax separation rate. 
	
	\begin{definition}\label{def:nonasymp_sep_rate}
		Let $\alpha,\beta\in(0,1)$ be given.  We say that the radius $\mathcal R^{*}_{p,T}$ is  $(\alpha,\beta)$-minimax detection boundary in problem of testing the hypothesis of no change against the alternative $\mathcal V_p,\tau(\mathcal R_{p,\cpset_T})$ if 
		$$
		\mathcal R^{*}_{p,\cpset_T} =\inf_{\psi\in\Psi_\alpha} \mathcal R_{p,\cpset_T}(\alpha,\psi),
		$$
		where 
		$$
		\mathcal R_{p,\cpset_T}(\alpha,\psi)=\inf \Bigl\{\mathcal R>0:\ \beta(\psi,\mathcal R)\le \beta \Bigr\}.
		$$ 	
	\end{definition}
	The minimax detection boundary is often written as the product $\mathcal R^{*}_{p,\cpset_T}=C\varphi_{p, \cpset_T}$, where $\varphi_{p,\cpset_T}$ is called {\it minimax detection rate} and  $C$ is a constant  independent of $p$ and $T$.   We say that the radius $\mathcal R_{p,\cpset_T}$ satisfies the upper bound condition  if there exists a constant $C^{*}>0$ and a test $\psi^{*}\in \Psi_\alpha$ such that $\forall C>C^{*}$ $\beta(\psi^{*}, \mathcal R_{p,\cpset_T})\le \beta$.  We say that $ \mathcal R_{p,\cpset_T}$ satisfies the lower  bound condition if for any $0<C\le C_*$  there is no test of level $\alpha$ with type II error smaller than $\beta$. Our goal is to find the minimax detection rate  $\varphi_{p,\cpset_T}$ and two constants $C_*$ and $C^{*}$ such that 
	$$
	C_* \varphi_{p,\cpset_T}\le \mathcal R^{*}_{p,\cpset_T}\le C^{*} \varphi_{p,\cpset_T}.
	$$
	
	\section{Proof of the upper bound}
	Denote 
	$$
	\cF_1(t)=\varphi^t_1(\mX,\widehat \Theta_2^{t})
	- \varphi^t_1(\mX,\widehat \Theta_1^t)\quad\mbox{and}\quad \cF_2(t)=\varphi^t_2(\mX,\widehat \Theta_1^{t})
	- \varphi^t_2(\mX,\widehat \Theta_2^t)
	$$
	\begin{lemma}[Type  I error]\label{lem:typeIerror}
		Let $\alpha\in(0,1)$ be a given significance level. Assume that  $T/2\geq p >\log\left (8|\mathcal T|/\alpha\right )$. If we solve \eqref{eq:Theta_hat1}--\eqref{eq:Theta_hat2} with  $\lambda^t_1$ and  $\lambda^t_2$ defined in \eqref{choice_lambda1}--\eqref{choice_lambda2}, then the  type I error of test~\eqref{eq:test}  is bounded by $\alpha$. 
	\end{lemma}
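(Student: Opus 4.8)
The plan is, by a union bound over $\mathcal T$, to reduce the claim to the pointwise estimate $\P_{\Theta,\Theta}\{\mathcal F(t)>H_{\alpha,t}\}\le\alpha/|\mathcal T|$ uniformly over $\Theta\in\mathcal M_p(R,\gamma)$ and $t\in\mathcal T$; since the two summands in \eqref{eq:TestStat} play symmetric roles it suffices to treat $t<T/2$, where $\mathcal F(t)=\cF_1(t)=\varphi_1^t(\mX,\widehat\Theta_2^t)-\varphi_1^t(\mX,\widehat\Theta_1^t)\ge0$. The conceptual point is that under $\Hyp_0$ the plug-in matrices $\widehat\Theta_1^t$ (estimated on $[0,t]$) and $\widehat\Theta_2^t$ (estimated on the long segment $[t,T]$, of length $T-t>p$ since $p\le T/2$) both target the same $\Theta$, so $\cF_1(t)$ should be of the order of a squared estimation error. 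I would insert $\varphi_1^t(\mX,\Theta)$ and write $\cF_1(t)=A(t)-B(t)$ with $A(t)=\varphi_1^t(\mX,\widehat\Theta_2^t)-\varphi_1^t(\mX,\Theta)$ and $B(t)=\varphi_1^t(\mX,\widehat\Theta_1^t)-\varphi_1^t(\mX,\Theta)\le0$ by optimality of $\widehat\Theta_1^t$. Substituting $\mathfrak Y_{\le t}=\Theta\mX_{<t}+\mathfrak Z$ (with $\mathfrak Z$ the $p\times t$ innovation matrix $(Z_1,\dots,Z_t)$) the $\|\mathfrak Z\|_2^2$ terms cancel, and, writing $D_j=\Theta-\widehat\Theta_j^t$, both $A(t)$ and $-B(t)$ split into a quadratic term $\tfrac1t\|D_j\mX_{<t}\|_2^2$, a noise term $\tfrac2t\langle D_j,\mathfrak Z\mX_{<t}^{\T}\rangle$, and a penalty increment $\lambda_1^t\bigl(\mathrm{pen}(\widehat\Theta_j^t)-\mathrm{pen}(\Theta)\bigr)$. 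On a high-probability event $\Omega_t$ (complementary probability $\le\alpha/|\mathcal T|$, here using $p>\log(8|\mathcal T|/\alpha)$), the duality $|\langle A,B\rangle|\le\|A\|_*\|B\|_{\op}$ together with Lemmas~\ref{lem:bound_stoch_term1}--\ref{lem:bound_stoch_term2} gives $\tfrac2t\|\mathfrak Z\mX_{<t}^{\T}\|_{\op}\le\lambda_1^t$ when $t>p$ and $\tfrac2t\|\mathfrak Z\|_{\op}\le\lambda_1^t$ when $t\le p$, and the sample covariance is controlled, $\tfrac1t\|\mX_{<t}\|_{\op}^2\lesssim\|\Sigma\|_{\op}$ if $t>p$ and $\lesssim p\|\Sigma\|_{\op}/(t(1-\gamma))$ if $t\le p$.

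On $\Omega_t$, for $-B(t)$ I would run the standard nuclear-norm basic inequality for $\widehat\Theta_1^t$: the penalty increment satisfies $\mathrm{pen}(\widehat\Theta_1^t)-\mathrm{pen}(\Theta)\ge\|\mathbf{Pr}^{\bot}_\Theta D_1\|_*-\|\mathbf{Pr}_\Theta D_1\|_*$ (with $D_1\mX_{<t}$ in place of $D_1$ when $t\le p$), which forces a cone condition $\|\mathbf{Pr}^{\bot}_\Theta D_1\|_*\le 3\|\mathbf{Pr}_\Theta D_1\|_*$ and yields $-B(t)\le-\tfrac1t\|D_1\mX_{<t}\|_2^2+c\lambda_1^t\sqrt R\,\|D_1\mX_{<t}\|_2$; optimising over $\|D_1\mX_{<t}\|_2$ gives $-B(t)\lesssim R(\lambda_1^t)^2t\asymp R\|\Sigma_Z\|_{\op}p/t$ for $t\le p$, and, using a restricted eigenvalue $\tfrac1t\|D_1\mX_{<t}\|_2^2\gtrsim\sigma_{\min}(\Sigma)\|D_1\|_2^2$ on the cone, $-B(t)\lesssim R(\lambda_1^t)^2/\sigma_{\min}(\Sigma)\asymp R\|\Sigma_1\|_{\op}^2p/\bigl((1-\gamma_1)^2\sigma_{\min}(\Sigma)t\bigr)$ for $t>p$ (these are the bounds underlying Proposition~\ref{cor:bound_risk_estimation2}). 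For $A(t)$, I would bound $\tfrac1t\|D_2\mX_{<t}\|_2^2\le\tfrac1t\|\mX_{<t}\|_{\op}^2\|D_2\|_2^2$, the noise term by $\lambda_1^t\|D_2\|_*$ (resp.\ $\lambda_1^t\|\mX_{<t}\|_{\op}\|D_2\|_*$ when $t\le p$), and the penalty increment by $\|D_2\|_*$ (resp.\ $\|\mX_{<t}\|_{\op}\|D_2\|_*$) via the triangle inequality, then insert the estimation bounds for $\widehat\Theta_2^t$ on the segment of length $T-t>p$ from Propositions~\ref{cor:bound_risk_estimation2} and~\ref{thm:operator_norm}, namely $\|D_2\|_2\lesssim\sqrt{Rp/(T-t)}$ and $\|D_2\|_*\lesssim R\sqrt{p/(T-t)}$. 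Using $q^2(t/T)=t(T-t)/T^2$ together with $t<T/2$ (so $T-t>T/2$ and $t(T-t)\le T^2/4$), every contribution is then at most $\tfrac{C^*}{C}\,\tfrac{Rp}{Tq^2(t/T)}$ times one of $1$, $\tfrac{2}{1-\gamma}\tfrac pT$, $\tfrac{2}{1-\gamma}\tfrac{\log(8|\mathcal T|/\alpha)}T$ (the last from the tail in Lemmas~\ref{lem:bound_stoch_term1}--\ref{lem:bound_stoch_term2}), which matches the threshold \eqref{def:test_threshold} for a suitable absolute $C$; the identity $\|\Sigma_1\|_{\op}^2/\bigl((1-\gamma_1)^2\sigma_{\min}(\Sigma)\bigr)=\|\Sigma\|_{\op}\kappa(\Sigma)/(1-\gamma)^2$ under $\Hyp_0$ (where $\Sigma_1=\Sigma$, $\gamma_1=\gamma$, via Lemma~\ref{lem:stab_lyapunov}) reproduces the constant $C^*$.

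I expect the main obstacle to be bookkeeping rather than any single estimate: $\cF_1(t)$ is a difference of two penalized criteria, not an excess risk of one estimator, so one must simultaneously bound the downward overfitting of $\widehat\Theta_1^t$ below $\Theta$ and the error of transferring the segment-$[t,T]$ estimator $\widehat\Theta_2^t$ into the segment-$[0,t]$ criterion; matching the resulting $Rp/t$ and $Rp/(T-t)$ type rates to the prescribed $Rp/(Tq^2(t/T))$ is exactly what forces the $\one_{\{t<T/2\}}/\one_{\{t\ge T/2\}}$ split in \eqref{eq:TestStat}, and carrying the two penalization regimes ($t\le p$ versus $t>p$, hence $T-t\le p$ versus $T-t>p$) and every covariance and condition-number factor through to the explicit $C^*$ and to the extra $\bigl(1+\tfrac{2}{1-\gamma}\tfrac{p+\log(\cdots)}{T}\bigr)$ factor is where the care is needed.
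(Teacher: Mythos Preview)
Your proposal is correct and follows essentially the same route as the paper. The paper works (by the same symmetry) with $t\ge T/2$ and decomposes $\cF_2(t)$ directly into quadratic, cross, and penalty pieces $A+B+C$, whereas you insert $\varphi_1^t(\mX,\Theta)$ and split $\cF_1(t)=A(t)-B(t)$; the two decompositions are algebraically identical after regrouping, and both handle the short-side estimator by playing its negative quadratic against the $\sqrt{R}$-linear term (your ``optimise over $\|D_1\mX_{<t}\|_2$'' is the paper's $2ab\le a^2+b^2$), while the long-side estimator is controlled via Proposition~\ref{cor:bound_risk_estimation2} and Lemma~\ref{lem:triangle_inequality_trace_norm}, with Lemmas~\ref{lem:bound_stoch_term1}, \ref{lem:bound_stoch_term2}, and~\ref{lem:upper_bound_sampling_operator} supplying the high-probability events.
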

	\begin{proof} 
		By the union bound, the type I error of the test $\psi_\alpha$ can be bounded as follows,
		$$
		\alpha(\psi_{\alpha})=\sup_{(\Theta_1,\Theta_2)\in\mathcal V_p(\tau,0)} \P_{\Theta_1,\Theta_2} \left(  \max_{t\in \mathcal T} \frac{\mathcal F(t)}{H_{\alpha,t}}>1  \right)\le \sum_{t\in\mathcal T}  \sup_{(\Theta_1,\Theta_2)\in\mathcal V_p(\tau,0)}\P_{\Theta_1,\Theta_2} \Bigl\{   \mathcal{F}(t)\geq H_{\alpha,t}\Bigr\}.
		$$
		Denote by $\Theta:=\Theta_1=\Theta_2$ the transition matrix of the process under the null hypothesis. To provide an $\alpha$-level test, we need to show that the type I error of testing  at each point $t\in \mathcal T$ is bounded by $\alpha/|\mathcal T|$:
		$$
		\sup_{\Theta \in\mathcal M_p(R,\gamma)}\P_{\Theta} \Bigl\{   \mathcal{F}(t)\geq H_{\alpha,t}\Bigr\}\le \alpha/|\mathcal T|,
		$$

		For any two matrices $A$ and $B$ we have
		\begin{align}\label{eq:1}
		\sum_{i=t}^{T-1}&\Bigl (\| X_{i+1}-AX_{i}\| _{2}^{2} -\| X_{i+1}-BX_{i}\| _{2}^{2}\Bigr)\nonumber  \\
		&=\sum_{i=t}^{T-1}\Bigl(\| \left (\Theta-A\right )X_{i}+Z_{i+1}\| _{2}^{2}-
		\| \left (\Theta-B\right )X_{i}+Z_{i+1}\| _{2}^{2}\Bigr)\nonumber \\
		&=\sum_{i=t}^{T-1}\Bigl(\|  (\Theta-A)X_{i}\| _{2}^{2}-
		\| \left (\Theta-B\right )X_{i}\| _{2}^{2}\Bigr)
		+2\sum_{i=t}^{T-1}\Bigl(\left\langle (\Theta-A)X_{i},Z_{i+1}\right\rangle -\left\langle (\Theta-B)X_{i},Z_{i+1}\right\rangle \Bigr)\nonumber \\
		&=\| \left (\Theta-A\right )\mX_{\ge t}\| _{2}^{2}-
		\| \left (\Theta-B\right )\mX_{\ge t}\| _{2}^{2}+2\left\langle \left (B-A\right )\mX_{\ge t},\mathfrak{Z}_{\ge t}\right\rangle. 
		\end{align}
		A similar calculation provides
		\begin{align}\label{eq:2}
		&\sum_{i=0}^{t-1}\left (\| X_{i+1}-AX_{i}\| _{2}^{2}- \| X_{i+1}-BX_{i}\| _{2}^{2}\right ) \nonumber \\ & \hskip 2 cm=\| \left (\Theta-A\right )\mX_{<t}\| _{2}^{2}-
		\| \left (\Theta-B\right )X_{i}\| _{2}^{2}+2\left\langle \left (B-A\right )\mX_{\ge t},\mathfrak{Z}_{\ge t}\right\rangle.
		\end{align}

		Since the test statistic $\mathcal F(t)$ depends on the location of $t$ with respect to the boundary, we will consider four following cases: 
		(a)~$t\ge T/2$ and $T-t\le p$, $t\ge p$,
		(b)~$t\ge T/2$ and $T-t\ge p$,
		(c)~$t\le T/2$ and $t\le p$, $T-t\ge T-p$,
		and (d)~$t\le T/2$ and $t\ge p$.
		
		Note that the assumption $p\le T/2$ is important. It implies, for example, that $t\ge p$ if $T-t\le p$ and $t\ge T/2$.
		
		By symmetry, it is sufficient to consider the cases (a) and (b). Consider the case (a) when $t\ge T/2\ge p$ and $T-t\le p$. Using~\eqref{eq:1} and \eqref{eq:2} we have 
		\begin{align*}
		&\sum_{i=t}^{T-1}\left (\| X_{i+1}-\widehat{\Theta}_1^tX_{i}\| _{2}^{2}-\| X_{i+1}-\widehat{\Theta}_2^tX_{i}\| _{2}^{2}\right ) \nonumber\\
		& \hskip 1 cm =\|(\Theta-\widehat{\Theta}_1^t)\mX_{\ge t} \| _{2}^{2}-
		\| (\Theta-\widehat{\Theta}_2^t )\mX_{\ge t}\| _{2}^{2}+2\left\langle  (\widehat{\Theta}_2^t -\widehat{\Theta}_1^t )\mX_{\ge t},\mathfrak{Z}_{\ge t}\right\rangle. 
		\end{align*}
		Thus $\mathcal F(t)$ can be written as the sum of three terms, 
		\begin{align}
		\mathcal{F}(t)&=\frac{1}{T-t}\left(\|(\Theta-\widehat{\Theta}_1^t)\mX_{\ge t} \| _{2}^{2}-
		\| (\Theta-\widehat{\Theta}_2^t )\mX_{\ge t}\| _{2}^{2}\right)+\frac2{T-t}\left\langle  (\widehat{\Theta}_2^t -\widehat{\Theta}_1^t )\mX_{\ge t},\mathfrak{Z}_{\ge t}\right\rangle.\nonumber\\
		&+\lambda_{2}^{t}\left (\| \widehat \Theta_1^t\mathfrak{X}_{\ge t}\|_* -\| \Theta\mathfrak{X}_{\ge t}\|_*+\| \Theta\mathfrak{X}_{\ge t}\|_*-\| \widehat \Theta_2^t\mathfrak{X}_{\ge t}\|_*\right ):=A+B+C.\label{eq:ABC}
		\end{align}
		
		Introduce the following random events,
		\begin{gather*}
		\mathcal{A}=\left \{ \| \mathfrak{Z}_{\ge t}\|_{\mathrm{op}} \leq c_1^* \sqrt{\| \Sigma_{Z}\|_{\mathrm{op}} }\left ( \sqrt{T-t}+\sqrt{p}+\sqrt{\log  \frac{8|\mathcal T|}{\alpha}}\right)\right  \},\\
		\mathcal{B}_1=\left \{ \| \widehat{\Theta}_1^t-\Theta\|_{2}^{2}\leq c^*\frac{\kappa^2(\Sigma)}{(1-\gamma)^2}\frac{Rp}{t} \right \}\quad \mbox{and}\quad \mathcal{B}_2=\left \{ \| \widehat{\Theta}_2^t-\Theta\|_{2}^{2}\leq c^* \frac{\kappa^2(\Sigma)}{(1-\gamma)^2}\frac{Rp}{T-t} \right \}
		\end{gather*}
		Lemma~\ref{lem:bound_stoch_term1} implies that $\P(\mathcal{A}) \geq 1-\dfrac{\alpha}{4|\mathcal T|}$ and Proposition~\ref{cor:bound_risk_estimation2} gives $\P(\mathcal{B}_i) \geq 1-c_{2}\exp(-c_{3}p)\ge  1-\dfrac{\alpha}{4|\mathcal T|}$ for sufficiently large $p$.  Thus, we have 
		$$
		\P_\Theta\{\mathcal F(t)>H_{\alpha,t}\} \le \P_\Theta\Bigl\{\{\mathcal F(t)>H_{\alpha,t}\}\cap\{ \mathcal A\cap \mathcal B_1\cap \mathcal B_2\}\Bigr\} + \P(\mathcal A^c) + \P_\Theta(\mathcal B_1^c) + \P_\Theta(\mathcal B_2^c).
		$$
		To show that the type I error is bounded by $\alpha$,  we have to show that the first probability is bounded by $\alpha/(4|\mathcal T|)$. 
		
		Consider first the scalar product term $B$ in~\eqref{eq:ABC}. We have
		\begin{align*}
		\left (\widehat\Theta_2^t-\widehat\Theta_1^t\right )\mathfrak{X}_{\ge t}&=\left( \Theta-\widehat\Theta_1^t\right )\mathfrak{X}_{\ge t} +\left (\widehat\Theta_2^t-\Theta \right )\mathfrak{X}_{\ge t}\\
		&= \left( \Theta-\widehat\Theta_1^t\right )\mathfrak{X}_{\ge t} - \mathbf{Pr}_{\Theta\mathfrak{X}_{\ge t}}\Bigl[\Theta\mathfrak{X}_{\ge t}\Bigr]+\mathbf{Pr}_{\Theta\mathfrak{X}_{\ge t}}\Bigl[\widehat\Theta_2^t \mathfrak{X}_{\ge t}\Bigr]+\mathbf {Pr}^{\bot}_{\Theta\mathfrak{X}_{t}}\Bigl[\widehat\Theta_2^t \mathfrak{X}_{\ge t}\Bigr].
		\end{align*}
		Using this identity and the H\"{o}lder inequality, we obtain
		\begin{align*}
		\left\langle \left (\widehat\Theta_2^t-\widehat\Theta_1^t\right )\mathfrak{X}_{\ge t}, \mathfrak{Z}_{\ge t}\right\rangle&\le 
		\| \left (\widehat\Theta_1^t-\Theta\right )\mathfrak{X}_{\ge t}\|_{*}\| \mathfrak{Z}_{\ge t}\|_{\mathrm{op}}\\
		&+\| \mathbf {Pr}_{\Theta\mathfrak{X}_{\ge t}}\left [\left (\Theta-\widehat\Theta_2^t\right )\mathfrak{X}_{\ge t}\right ]\|_{*} \| \mathfrak{Z}_{\ge t}\|_{\mathrm{op}}+\|\mathbf {Pr}^{\bot}_{\Theta\mathfrak{X}_{\ge t}}\left [\widehat\Theta_2^t\mathfrak{X}_{\ge t}\right ]\|_{*}\| \mathfrak{Z}_{\ge t}\|_{\mathrm{op}}.
		\end{align*}
		By Lemma~\ref{lem:bound_stoch_term1}, taking into account the fact that $p\ge T-t$ and $p> \log(8 |\mathcal T|/\alpha)$, we obtain that with probability at least $1-\alpha/(4 |\mathcal T|)$, 
		$$
		\frac2{T-t} \|\mathfrak{Z}_{\ge t}\|_{\mathrm{op}} \le 6c_1^*\sqrt{\|\Sigma_Z\|_{\mathrm{op}}} \frac{\sqrt p}{T-t}= \lambda_2^t.
		$$
		Thus, given  $\mathcal A$, we have 
		$$
		B \le \lambda^{t}_{2}\| (\widehat\Theta_1^t-\Theta)\mathfrak{X}_{\ge t}\|_{*} + \lambda^{t}_{2}\| \mathbf {Pr}_{\Theta\mathfrak{X}_{\ge t}}\left [(\Theta-\widehat\Theta_2^t )\mathfrak{X}_{\ge t}\right ]\|_{*} + \|\mathbf {Pr}^{\bot}_{\Theta\mathfrak{X}_{\ge t}}\left [\widehat\Theta_2^t\mathfrak{X}_{\ge t}\right ]\|_{*}
		$$
		Using the triangle inequality,  $\| \widehat \Theta_1^t\mathfrak{X}_{\ge t}\|_* -\| \Theta\mathfrak{X}_{\ge t}\|_* \leq \| \left (\widehat\Theta_1^t-\Theta\right )\mathfrak{X}_{\ge t}\|_{*}$ and the fact that by~\eqref{triangle_inequality_trace_norm},
		$\| \Theta\mathfrak{X}_{\ge t}\|_*-\| \widehat \Theta_2^t\mathfrak{X}_{\ge t}\|_* \leq \| \mathbf {Pr}_{\Theta\mathfrak{X}_{\ge t}}\left [\left (\Theta-\widehat\Theta_2^t\right )\mathfrak{X}_{\ge t}\right ]\|_{*} - \|\mathbf {Pr}^{\bot}_{\Theta\mathfrak{X}_{\ge t}}\left [\left( \Theta - \widehat\Theta_2^t\right)\mathfrak{X}_{\ge t}\right ]\|_{*}$, we obtain for the term $C$, 
		$$
		C \leq \lambda^{t}_{2}\| \left (\widehat\Theta_1^t-\Theta\right )\mathfrak{X}_{\ge t}\|_{*} +\lambda^{t}_{2}\| \mathbf {Pr}_{\Theta\mathfrak{X}_{\ge t}}\left [\left (\Theta-\widehat\Theta_2^t\right )\mathfrak{X}_{\ge t}\right ]\|_{*}-\lambda^{t}_{2}\|  \mathbf {Pr}^{\bot}_{\Theta\mathfrak{X}_{\ge t}}\left [ \widehat\Theta_2^t\mathfrak{X}_{\ge t}\right ] \|_{*}.
		$$
		Gathering these bounds, we obtain that given $\mathcal A$,
		\begin{align}
		\mathcal{F}(t)& \leq \frac{1}{T-t}\| \left (\Theta-\widehat\Theta_1^t\right )\mathfrak{X}_{\ge t}\| _{2}^{2}-\frac{1}{T-t} \| \left (\Theta-\widehat\Theta_2^t\right )\mathfrak{X}_{\ge t}\| _{2}^{2}
		\nonumber
		\\&
		+2\lambda^{t}_{2}\| \left (\widehat\Theta_1^t-\Theta\right )\mathfrak{X}_{\ge t}\|_{*}
		+2\lambda^{t}_{2}\| \mathbf {Pr}_{\Theta\mathfrak{X}_{\ge t}}\left [\left (\Theta-\widehat\Theta_2^t\right )\mathfrak{X}_{\ge t}\right ]\|_{*}.
		\label{eq:FtboundABC}		
		\end{align}
		Using Lemma~\ref{lem:triangle_inequality_trace_norm}, we get the bound
		$$
		\| \widehat\Theta_1^t-\Theta\|_{*} \le  \| \mathbf {Pr}^{\bot}_{\Theta}\left [ \widehat\Theta_1^t-\Theta\right ]\|_{*} + \| \mathbf {Pr}_{\Theta}\left [ \widehat\Theta_1^t-\Theta\right ]\|_{*}\le 6\| \mathbf {Pr}_{\Theta}\left [ \widehat\Theta_1^t-\Theta\right ]\|_{*}.
		$$
		Taking into account that 
		$\rank \left(\mathbf {Pr}_{\Theta}\Bigl[\widehat\Theta_1^t-\Theta \Bigr ] \right) \leq 2 \rank( \Theta ) =2R$,  using $\|A\|_*^2 \le \rank(A)\|A\|_2^2$ and the trivial inequality $2ab\leq a^{2}+b^{2}$, we get
		\begin{align*}
		2\lambda^{t}_{2}\| \left (\widehat\Theta_1^t-\Theta\right )\mathfrak{X}_{\ge t}\|_{*} &\leq 2\lambda^{t}_{2}\| \widehat\Theta_1^t-\Theta \|_{* } \| \mathfrak{X}_{\ge t}\|_{\mathrm{op}}\\
		& \leq 12 \lambda^{t}_{2}\| \mathbf {Pr}_{\Theta}\left [ \widehat\Theta_1^t-\Theta\right ]\|_{*} \| \mathfrak{X}_{\ge t}\|_{\mathrm{op}}\\
		& \leq 12 \lambda^{t}_{2} \sqrt{2R}\| \widehat\Theta_1^t-\Theta \|_{2 } \| \mathfrak{X}_{\ge t}\|_{\mathrm{op}}\\
		&\leq 2 \left (\lambda^{t}_{2}\right )^{2}R(T-t)+\dfrac{36}{T-t}\| \widehat\Theta_1^t-\Theta \|_{2 }^{2} \| \mathfrak{X}_{\ge t}\|_{\mathrm{op}}^{2}.
		\end{align*}
		Thus by Proposition~\ref{cor:bound_risk_estimation2}, given  $\mathcal B_1$ with probability at least $1-c_2\exp (-c_3p)\ge 1-\alpha/(4|\mathcal T|)$, we have
		\begin{center}
			$2\lambda^{t}_{2}\| \left (\widehat\Theta_1^t-\Theta\right )\mathfrak{X}_{\ge t}\|_{*} \leq 2 \left (\lambda^{t}_{2}\right )^{2}R(T-t)+\dfrac{36c^* Rp}{t(T-t)}  \dfrac{\kappa^2(\Sigma)}{(1-\gamma)^2}\| \mathfrak{X}_{\ge t}\|_{\mathrm{op}}^{2}$
		\end{center}
		Since $\rank\left (\Theta\mathfrak{X}_{t}\right )\leq\rank (\Theta)$,  we have, using the same argument as above, we have 
		\begin{align*}
		2\lambda^{t}_{2}\| \mathbf {Pr}_{\Theta\mathfrak{X}_{\ge t}}\left [(\Theta-\widehat\Theta_2^t )\mathfrak{X}_{\ge t}\right ]\|_{*} &\leq 2 \lambda^{t}_{2} \sqrt{2R} \|  (\Theta-\widehat\Theta_2^t )\mathfrak{X}_{\ge t}\| _{2}\\
		&\leq 2 \left (\lambda^{t}_{2}\right )^{2}R(T-t) + \frac{1}{T-t}\| (\Theta-\widehat\Theta_2^t )\mathfrak{X}_{\ge t} \|_{2}^{2}.
		\end{align*}
		Similarly, by Proposition~\ref{cor:bound_risk_estimation2}, given  $\mathcal B_2$ we have
		\begin{align*}
		\frac{1}{T-t}\| (\Theta-\widehat\Theta_1^t )\mathfrak{X}_{\ge t}\| _{2}^{2} \leq \dfrac{c^*Rp}{t(T-t)}  \dfrac{\kappa^2(\Sigma)}{(1-\gamma)^2} \| \mathfrak{X}_{\ge t}\|_{\mathrm{op}}^{2}
		\end{align*}
		Finally, for sufficiently large $p$, with the probability at least $1-\alpha/(4|\mathcal T|)$, \eqref{eq:FtboundABC} and the above inequalities and the definition of $\lambda_2^t$ imply, given the event $\mathcal A\cap \mathcal B_1\cap\mathcal B_2$, 
		$$
		\mathcal{F}(t) \leq \dfrac{37c^*Rp}{t(T-t)}  \dfrac{\kappa^2(\Sigma)}{(1-\gamma)^2} \| \mathfrak{X}_{\ge t}\|_{\mathrm{op}}^{2}+ \dfrac{144 (c_1^*)^{2} \| \Sigma_{Z}\|_{\mathrm{op}}Rp}{T-t}.
		$$
		Thus, for any $\Theta\in\mathcal M_p(R,\gamma)$,
		\begin{align*}
		\P_\Theta\Bigl\{\{\mathcal F(t)>H_{\alpha,t}\}
		&\cap\{ \mathcal A\cap \mathcal B_1\cap\mathcal B_2\}\Bigr\}\\
		&\le \P\left\{\dfrac{37c^*Rp}{t(T-t)}\dfrac{\kappa^2(\Sigma)}{(1-\gamma)^2} \| \mathfrak{X}_{\ge t}\|_{\mathrm{op}}^{2}+ \dfrac{144 (c^*)^{2}Rp \| \Sigma_{Z}\|_{\mathrm{op}}}{T-t}>H_{\alpha,t}\right\}\\
		&\le \P\left\{ \frac1{T-t}\|\mathfrak{X}_{\ge t}\|_{\mathrm{op}}^{2} \ge 2\|\Sigma\|_{\mathrm{op}} \Bigl(1+\frac {c_0}{1-\gamma}\frac{p+\log(8|\mathcal T|/\alpha)}{T-t} 
		\Bigr)\right\}\le \frac{\alpha}{4|\mathcal T|},
		\end{align*}
		where we used  Lemma~\ref{lem:upper_bound_sampling_operator} with $\delta= \dfrac{\alpha}{8 |\mathcal T|}$, $T-t\le p$, $t\ge p$ and the fact that  for a sufficiently large universal constant $C$ depending on $c^*$, $c_0$ and $c_1^*$, we have the bound
		\begin{align*}
		H_{\alpha,t}&= C\max\Bigl(\| \Sigma_Z\|_{\mathrm{op}},\dfrac{\kappa^2(\Sigma)}{(1-\gamma)^2} \|\Sigma\|_{\mathrm{op}}\Bigr) \dfrac{Rp}{Tq^{2}(t/T)} \left(1+\frac1{1-\gamma}\dfrac{p+\log(8|\mathcal T|/\alpha)}{T}\right)\\
		&\ge \dfrac{144 (c_1^*)^{2}Rp \| \Sigma_{Z}\|_{\mathrm{op}}}{T-t} + \frac{74c^*Rp}{t} \dfrac{\kappa^2(\Sigma)}{(1-\gamma)^2} \|\Sigma\|_{\mathrm{op}} \Bigl(1+\frac {c_0}{1-\gamma}\frac{p+\log(8|\mathcal T|/\alpha)}{T-t} 
		\Bigr).
		\end{align*}
		
		Let us turn to the case (b) of the candidate change-point located far from the endpoints of the interval, $T-t\ge p$ and $t\ge T/2$. Calculations similar to the case (a) imply
		\begin{align*}
		\mathcal{F}(t)& =\frac{1}{T-t}\sum_{i=t}^{T-1}\left (\| \left (\Theta-\widehat\Theta_1^t\right )X_i\| _{2}^{2}-\| \left (\Theta-\widehat\Theta_2^t\right )X_i\| _{2}^{2}\right )
		+ \frac{2}{T-t}\left\langle \left (\widehat\Theta_2^t-\widehat\Theta_1^t\right ), \mathfrak {Z}_{\ge t}\mathfrak{X}^{\T}_{\ge t}\right\rangle\\
		&+\lambda_{2}^{t}\left (\| \widehat \Theta_1^t\|_* -\| \Theta\|_*+\| \Theta\|_*-\|\left ( \widehat \Theta_2^t\right )\|_*\right )=A+B+C.
		\end{align*} 
		Introduce the  random events
		$$
		\mathcal{A}'=\left \{\frac1{T-t} \| \mathfrak {Z}_{\ge t}\mathfrak{ X}_{\ge t}^{\T}\|_{\mathrm{op}} \leq \frac{c_2^*  \|\Sigma\|_{\mathrm{op}}}{1-\gamma}\sqrt{ \frac{p}{T-t}}\right \}\quad \mbox{and}\quad \mathcal{D}=\left\{\frac1{T-t}\sigma_{\min}(\mX_{\ge t}^\T\mX_{\ge t})\ge \frac{\sigma_{\min}(\Sigma_2)}{4}\right\}.
		$$
		Lemma~\ref{lem:bound_stoch_term2} and Lemma~\ref{lem:upper_bound_sampling_operator} imply that $\P(\mathcal{A}') \ge 1-\alpha/(8|\mathcal T|)$ and  $\P(\mathcal{D}')\ge1-\alpha/(8|\mathcal T|)$ for sufficiently large such that$p\ge a_1\log(8|\mathcal T|/a_2)$ for some universal constants $a_1,a_2>0$.
		Similarly to the case (a), we have 
		$$
		\P_\Theta\{\mathcal F(t)>H_{\alpha,t}\} \le \P_\Theta\Bigl\{\{\mathcal F(t)>H_{\alpha,t}\}\cap \mathcal A'\cap \mathcal B_1\cap \mathcal B_2\cap \mathcal D\Bigr\} + \P_\Theta((\mathcal A')^c) + \P_\Theta(\mathcal B_1^c) + \P_\Theta(\mathcal B_2^c) +\P_\Theta(\mathcal D^c).
		$$
		We have to show that the first probability is bounded by $\alpha/(4|\mathcal T|)$. 
		As in the previous case, we get using the H\"{o}lder inequality, 
		\begin{align*}
		\left\langle \left (\widehat\Theta_2^t-\widehat\Theta_1^t\right ), \mathfrak {Z}_{\ge t}\mathfrak{X}^{\T}_{\ge t}\right\rangle& \leq \| \widehat\Theta_1^t-\Theta\|_{*}\| \mathfrak {Z}_{\ge t}\mathfrak{X}^{\T}_{\ge t}\|_{\mathrm{op}}
		\\
		& +\| \mathbf {Pr}_{\Theta}\left (\Theta-\widehat\Theta_2^t\right )\|_{*} \| \mathfrak {Z}_{\ge t}\mathfrak{X}^{\T}_{\ge t}\|_{\mathrm{op}}+\|\mathbf {Pr}^{\bot}_{\Theta}\left (\widehat\Theta_2^t\right )\|_{*}\| \mathfrak {Z}_{\ge t}\mathfrak{X}^{\T}_{\ge t}\|_{\mathrm{op}}.
		\end{align*}
		Remind that $\lambda^t_2=\dfrac{2c_2^*\| \Sigma \|_{\mathrm{op}} }{1-\gamma}\sqrt{\displaystyle{\frac p{T-t}}}$. Lemma~\ref{lem:bound_stoch_term2} implies that for sufficiently large $p$, with the probability at least  $1-k_1\exp(-k_2p)$,  
		$$
		\dfrac{2}{T-t}\| \mathfrak {Z}_{\ge t}\mathfrak{X}^{\T}_{\ge t}\|_{\mathrm{op}}\leq \dfrac{2c_2^*\|\Sigma\|_{\mathrm{op}}}{1-\gamma}\sqrt{\dfrac{p}{T-t}}=\lambda_2^t.
		$$
		Thus, given the event $\mathcal A'$, we get 
		$B\leq \lambda^{t}_{2}\| \widehat\Theta_1^t-\Theta\|_{*} + \lambda^{t}_{2}\| \mathbf {Pr}_{\Theta}\left [\Theta-\widehat\Theta_2^t\right ]\|_{*} + \lambda^{t}_{2}\|\mathbf {Pr}^{\bot}_{\Theta}\left [\widehat\Theta_2^t\right ]\|_{*}$.
		Similarly, for the term $C$, using the triangle inequality and  the inequality \eqref{triangle_inequality_trace_norm} implying that $\| \Theta\|_*-\| \widehat \Theta_2^t\|_* \leq \| \mathbf {Pr}_{\Theta}\left [\Theta-\widehat\Theta_2^t\right ]\|_{*} - \|\mathbf {Pr}^{\bot}_{\Theta}\left [ \Theta - \widehat\Theta_2^t\right ]\|_{*}$, we obtain 
		$$
		C \leq \lambda^{t}_{2}\| \widehat\Theta_1^t-\Theta\|_{*} +\lambda^{t}_{2}\| \mathbf {Pr}_{\Theta}\left [\Theta-\widehat\Theta_2^t\right ]\|_{*}- \lambda^{t}_{2}\|  \mathbf {Pr}^{\bot}_{\Theta}\left [ \widehat\Theta_2^t\right ] \|_{*}.
		$$
		Thus, given $\mathcal A'$, we have 
		$$
		\mathcal{F}(t) \leq \dfrac{1}{T-t}\| \left (\Theta-\widehat\Theta_1^t\right )\mathfrak{X}_{\ge t}\| _{2}^{2}-\dfrac{1}{T-t} \| \left (\Theta-\widehat\Theta_2^t\right )\mathfrak{X}_{\ge t}\| _{2}^{2} 
		+2\lambda^{t}_{2}\| \widehat\Theta_1^t-\Theta\|_{*}
		+2\lambda^{t}_{2}\| \mathbf {Pr}_{\Theta}\left (\Theta-\widehat\Theta_2^t\right )\|_{*}.	
		$$
		As in the case (a),  using Lemma~\ref{lem:triangle_inequality_trace_norm} and $2ab\leq a^{2}+b^{2}$, we can bound the  last two terms of this inequality
		$$
		2\lambda^{t}_{2}\| \widehat\Theta_1^t-\Theta\|_{*} \leq 12\lambda^{t}_{2}\sqrt{2R} \| \widehat\Theta_1^t-\Theta\|_{2}
		\leq \dfrac{9R \left (\lambda^{t}_{2}\right )^{2}}{\sigma_{\min}(\Sigma)} + 8\sigma_{\max}(\Sigma)\|\Theta-\widehat\Theta_1^t \|_{2}^{2}
		$$
		and 
		$$
		2\lambda^{t}_{2}\| \mathbf {Pr}_{\Theta}\left (\Theta-\widehat\Theta_2^t\right )\|_{*} \leq 2\lambda^{t}_{2}\sqrt{2R} \|\widehat\Theta_2^t- \Theta\|_{2}\\
		\leq \dfrac{8R\left (\lambda^{t}_{2}\right )^{2}}{\sigma_{\min}(\Sigma)}+\dfrac{\sigma_{\min}(\Sigma)}{4}\| \widehat\Theta_2^t-\Theta \|_{2}^{2}.
		$$
		By Proposition~\ref{cor:bound_risk_estimation2}, with probability at least $1-\alpha/(4|\mathcal T|)$, 
		\begin{align*}
		\frac{1}{T-t}\| \left (\Theta-\widehat\Theta_1^t\right )\mathfrak{X}_{\ge t}\| _{2}^{2} \leq \dfrac{c^*Rp}{t(T-t)}\dfrac{\kappa^2(\Sigma)}{(1-\gamma)^2}\| \mathfrak{X}_{\ge t}\|_{\mathrm{op}}^{2}.
		\end{align*}
		This implies that given $\mathcal A'\cap \mathcal B_1$ 
		\begin{align*}
		\mathcal{F}(t) & \leq \dfrac{c^*Rp}{t(T-t)}\dfrac{\kappa^2(\Sigma)}{(1-\gamma)^2} \| \mathfrak{X}_{\ge t}\|_{\mathrm{op}}^{2} - \dfrac{1}{T-t} \sigma_{\min}(\mathfrak{X}_{\ge t}^{\T}\mathfrak{X}_{\ge t}) \| \Theta-\widehat\Theta_2^t \|_{2}^{2} + \dfrac{17R \left (\lambda^{t}_{2}\right )^{2}}{\sigma_{\min}(\Sigma)}
		\\
		&+ 8\sigma_{\max}(\Sigma)\|\Theta-\widehat\Theta_1^t \|_{2}^{2} + \dfrac{\sigma_{\min}(\Sigma)}{4}\| \widehat\Theta_2^t-\Theta \|_{2}^{2}.
		\end{align*}
		Using the definition of $\lambda_2^t$, Lemma~\ref{lem:upper_bound_sampling_operator} and Proposition~\ref{cor:bound_risk_estimation2}, we finally obtain that given $\mathcal A'\cap \mathcal B_1\cap\mathcal B_2\cap \mathcal D$ 
		\begin{align*}
		\mathcal{F}(t) & \leq \dfrac{c^*Rp}{t(T-t)}\dfrac{\kappa^2(\Sigma)}{(1-\gamma)^2}  \| \mathfrak{X}_{\ge t}\|_{\mathrm{op}}^{2} + \dfrac{68 (c_2^*)^2\|\Sigma\|_{\mathrm{op}}^2}{(1-\gamma)^2\sigma_{\min}(\Sigma)}\frac {Rp}{T-t}
		+  8c^*\sigma_{\max}(\Sigma) \frac{\kappa^2(\Sigma)}{(1-\gamma)^2}\frac{Rp}{t} \\
		&\le \dfrac{c^*Rp}{t(T-t)}\dfrac{\kappa^2(\Sigma)}{(1-\gamma)^2} \| \mathfrak{X}_{\ge t}\|_{\mathrm{op}}^{2}  + \max(c^*,68(c_2^*)^2)\frac{\kappa^2(\Sigma)\|\Sigma\|_{\op}}{(1-\gamma)^2}\frac{Rp}{Tq^2(t/T)} \\
		&\le C\frac{Rp}{Tq^2(t/T)} \dfrac{\kappa^2(\Sigma)}{(1-\gamma)^2} \left(\frac 1T \| \mathfrak{X}_{\ge t}\|_{\mathrm{op}}^{2} +\|\Sigma\|_{\op}\right),
		\end{align*}
		where $C$ is an absolute constant depending on $c^*$ and $c_2^*$. 
		Note that since $T-t>p$ and $T-t\le T/2$,  we have the following bound for a sufficiently large universal constant $C$,
		\begin{align*} 
		H_{\alpha,t}
		&\ge C  \dfrac{ \kappa^{2}(\Sigma)\| \Sigma\|_{\mathrm{op}}}{(1-\gamma)^2}\dfrac{Rp}{Tq^{2}(t/T)} \left(1+\frac{T-t}{T}\left(1+\frac{c_0}{1-\gamma}\left(1+\dfrac{\log(8|\mathcal T|/\alpha)}{T-t}\right)\right)\right).
		\end{align*}
		Using this fact and Lemma~\ref{lem:upper_bound_sampling_operator} with $\delta= \dfrac{\alpha}{8 |\mathcal T|}$, we obtain that for any $\Theta\in\mathcal M_p(R,\gamma)$,
		\begin{align*}
		\P_\Theta\Bigl\{\{\mathcal F(t)>H_{\alpha,t}\}&\cap\{ \mathcal A'\cap \mathcal B_1\cap\mathcal B_2\}\Bigr\} 
		\le  \P\left\{C\frac{Rp}{Tq^2(t/T)} \dfrac{\kappa^2(\Sigma)}{(1-\gamma)^2} \left(\frac 1{T} \| \mathfrak{X}_{\ge t}\|_{\mathrm{op}}^{2} +\|\Sigma\|_{\op}\right)>H_{\alpha,t}\right\}\\
		&\le \P\left\{ \frac1{T-t}\|\mathfrak{X}_{\ge t}\|_{\mathrm{op}}^{2} \ge 2\|\Sigma\|_{\mathrm{op}} \left(1+\frac{c_0}{1-\gamma}\left(1+
		\frac{\log(8|\mathcal T|/\alpha)}{T-t} \right)\right)\right\}
		\le \frac{\alpha}{4|\mathcal T|}
		\end{align*}
		and the lemma follows. \end{proof}
	

	\begin{lemma}[Type  II error]\label{lem:typeIIerror}
		Let $\beta\in(0,1)$ be given significance level  such that $p>\max\Bigl(k_2^{-1}\log(4k_1/\beta)\Bigr),\\ c_3^{-1}\log(4c_2/\beta), \log(8/\beta)\Bigr)$, where $k_1,k_2,c_2,c_3$ are the constants from Proposition~\ref{cor:bound_risk_estimation2} and Lemma~\ref{lem:bound_stoch_term2}. Assume that 
		\begin{equation}\label{eq:separation_rate}
		q^2\Bigl(\frac \tau T\Bigr)\|\Theta_1-\Theta_2\|_2^2\geq 
		\Xi (\Sigma_1,\Sigma_2,\Sigma,\gamma) \frac{Rp}{T} \left(1+\frac{\log(8|\mathcal T|/\alpha)+\log(8/\beta)}{T}\right),
		\end{equation}
		where, for some absolute constant $C^{*}$, 
		$$
		\Xi(\Sigma_1,\Sigma_2,\Sigma,\gamma)= \frac{C^{*}}{(1-\gamma)^3}\frac{\mathfrak M_1\vee \mathfrak M_2}
		{\mathfrak m}
		$$
		with 
		\begin{align}
		\mathfrak M_1&=\max\Bigl(\frac{\|\Sigma_2\|_{\op}^2}{\sigma_{\min}(\Sigma_Z)\wedge\sigma_{\min}(\Sigma_1)},\frac{\|\Sigma_1\|_{\op}^2}{\sigma_{\min}(\Sigma_Z)\wedge\sigma_{\min}(\Sigma_2)}\Bigr)\label{constM1}\\
		\mathfrak M_2 &= \max\Bigl(\kappa^2(\Sigma_1)\|\Sigma_2\|_{\op},\kappa^2(\Sigma_2)\|\Sigma_1\|_{\op} , \kappa^2(\Sigma)\|\Sigma\|_{\mathrm{op}},\|\Sigma_Z\|_{\mathrm{op}}\Bigr)\label{constM2}
		\end{align}
		and 
		\begin{equation}\label{constm}
		\mathfrak m= \sigma_{\min}(\Sigma_Z)\wedge\sigma_{\min}(\Sigma_1)\wedge\sigma_{\min}(\Sigma_2).
		\end{equation}
		Then,  if we solve \eqref{eq:Theta_hat1}--\eqref{eq:Theta_hat2} with $\lambda^t_1$ and  $\lambda^t_2$ given by \eqref{choice_lambda1} and \eqref{choice_lambda2}, the type II error of the test $\psi_\alpha$ defined by~\eqref{eq:test}  is bounded by $\beta$.
	\end{lemma}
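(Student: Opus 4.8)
The plan is to follow the scheme of the proof of Lemma~\ref{lem:typeIerror}. By the left/right symmetry we may assume $\tau\ge T/2$, so that $\mathcal F(\tau)=\varphi_2^{\tau}(\mX,\widehat\Theta_1^{\tau})-\varphi_2^{\tau}(\mX,\widehat\Theta_2^{\tau})$, and it suffices to exhibit an event of probability at least $1-\beta$ on which $\mathcal F(\tau)>H_{\alpha,\tau}$ (when $\tau$ is not a point of the grid $\mathcal T$ one runs the same argument at the closest grid point, whose energy is still of the same order). Since $\|\Theta_1-\Theta_2\|_2^2\le 4R\gamma^2$ while the absolute constant in $\Xi$ is taken large (as the argument below requires anyway), condition~\eqref{eq:separation_rate} forces $T-\tau\gtrsim p$ — otherwise the alternative class $\mathcal V_{p,\tau}$ is empty — so we work in the well-posed regime where $\varphi_2^{\tau}(\mX,M)=\tfrac1{T-\tau}\|\mathfrak Y_{>\tau}-M\mX_{\ge\tau}\|_2^2+\lambda_2^{\tau}\|M\|_*$ with $\lambda_2^{\tau}$ from~\eqref{choice_lambda2}, and $[0,\tau]$, $[\tau,T]$ are stationary segments with transition matrices $\Theta_1,\Theta_2$. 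Using the optimality $\varphi_2^{\tau}(\mX,\widehat\Theta_2^{\tau})\le\varphi_2^{\tau}(\mX,\Theta_2)$ and expanding the least-squares part as in~\eqref{eq:1},
\begin{align*}
\mathcal F(\tau)&\ge\varphi_2^{\tau}(\mX,\widehat\Theta_1^{\tau})-\varphi_2^{\tau}(\mX,\Theta_2)\\
&=\frac1{T-\tau}\|(\Theta_2-\widehat\Theta_1^{\tau})\mX_{\ge\tau}\|_2^2+\frac2{T-\tau}\langle(\Theta_2-\widehat\Theta_1^{\tau})\mX_{\ge\tau},\mathfrak Z_{\ge\tau}\rangle+\lambda_2^{\tau}\bigl(\|\widehat\Theta_1^{\tau}\|_*-\|\Theta_2\|_*\bigr).
\end{align*}
The idea is that the quadratic term is $\gtrsim\mathfrak m\|\Theta_1-\Theta_2\|_2^2$ up to a correction of order $(\mathfrak M_1\vee\mathfrak M_2)Rp/(T-\tau)$, while the cross and penalty terms are corrections of the same order; since $q^2(\tau/T)\asymp(T-\tau)/T$ for $\tau\ge T/2$ this yields $\mathcal F(\tau)\gtrsim\mathfrak m\|\Theta_1-\Theta_2\|_2^2-C(1-\gamma)^{-3}(\mathfrak M_1\vee\mathfrak M_2)Rp(1+\cdots)/(Tq^2(\tau/T))$, and~\eqref{eq:separation_rate} with the stated $\Xi$ forces $\mathcal F(\tau)>H_{\alpha,\tau}$.

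Concretely I would intersect the following events, whose complements are each at most a fixed fraction of $\beta$ thanks to the lower bounds imposed on $p$: the consistency events $\mathcal B_1=\{\|\widehat\Theta_1^{\tau}-\Theta_1\|_2^2\lesssim\kappa^2(\Sigma_1)(1-\gamma_1)^{-2}Rp/\tau\}$ and $\mathcal B_2=\{\|\widehat\Theta_2^{\tau}-\Theta_2\|_2^2\lesssim\kappa^2(\Sigma_2)(1-\gamma_2)^{-2}Rp/(T-\tau)\}$ from Proposition~\ref{cor:bound_risk_estimation2} applied to the two stationary segments (this event also carries, via Lemma~\ref{lem:triangle_inequality_trace_norm}, the cone bounds $\|\widehat\Theta_j^{\tau}-\Theta_j\|_*\lesssim\sqrt R\,\|\widehat\Theta_j^{\tau}-\Theta_j\|_2$); the bounds $\tfrac1{T-\tau}\|\mX_{\ge\tau}\|_{\op}^2\lesssim\|\Sigma_2\|_{\op}(1+(1-\gamma_2)^{-1}(p+\log(1/\beta))/(T-\tau))$ and $\tfrac1{T-\tau}\sigma_{\min}(\mX_{\ge\tau}\mX_{\ge\tau}^{\T})\gtrsim\sigma_{\min}(\Sigma_2)$ from Lemma~\ref{lem:upper_bound_sampling_operator}; and $\tfrac2{T-\tau}\|\mathfrak Z_{\ge\tau}\mX_{\ge\tau}^{\T}\|_{\op}\le\lambda_2^{\tau}$ from Lemma~\ref{lem:bound_stoch_term2}, exactly as in case~(b) of Lemma~\ref{lem:typeIerror}. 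On this event: for the quadratic term, split $\Theta_2-\widehat\Theta_1^{\tau}=(\Theta_2-\Theta_1)-(\widehat\Theta_1^{\tau}-\Theta_1)$, use $\|X-Y\|_2^2\ge\tfrac12\|X\|_2^2-\|Y\|_2^2$, then $\|(\Theta_1-\Theta_2)\mX_{\ge\tau}\|_2^2\ge\sigma_{\min}(\mX_{\ge\tau}\mX_{\ge\tau}^{\T})\|\Theta_1-\Theta_2\|_2^2$ with the eigenvalue bound, and $\|(\widehat\Theta_1^{\tau}-\Theta_1)\mX_{\ge\tau}\|_2\le\|\widehat\Theta_1^{\tau}-\Theta_1\|_2\|\mX_{\ge\tau}\|_{\op}$ with $\mathcal B_1$, obtaining a lower bound $\gtrsim\sigma_{\min}(\Sigma_2)\|\Theta_1-\Theta_2\|_2^2-\kappa^2(\Sigma_1)\|\Sigma_2\|_{\op}(1-\gamma)^{-3}Rp/\tau$ with $Rp/\tau\le2Rp/T\le Rp/(2Tq^2(\tau/T))$; for the cross and penalty terms, H\"older gives $\tfrac2{T-\tau}\langle(\Theta_2-\widehat\Theta_1^{\tau})\mX_{\ge\tau},\mathfrak Z_{\ge\tau}\rangle\ge-\lambda_2^{\tau}\|\Theta_2-\widehat\Theta_1^{\tau}\|_*$ and $\|\widehat\Theta_1^{\tau}\|_*-\|\Theta_2\|_*\ge-\|\Theta_2-\widehat\Theta_1^{\tau}\|_*$, and $\|\Theta_2-\widehat\Theta_1^{\tau}\|_*\le\sqrt{2R}\|\Theta_1-\Theta_2\|_2+\|\widehat\Theta_1^{\tau}-\Theta_1\|_*\lesssim\sqrt R\bigl(\|\Theta_1-\Theta_2\|_2+\|\widehat\Theta_1^{\tau}-\Theta_1\|_2\bigr)$, so that $2ab\le\varepsilon\sigma_{\min}(\Sigma_2)a^2+(\varepsilon\sigma_{\min}(\Sigma_2))^{-1}b^2$ with small fixed $\varepsilon$ reabsorbs the $a^2$-part into the quadratic term and leaves a $b^2$-part of order $R(\lambda_2^{\tau})^2/\sigma_{\min}(\Sigma_2)$ plus strictly lower-order terms, which by~\eqref{choice_lambda2} and $\sigma_{\min}(\Sigma_2)\ge\sigma_{\min}(\Sigma_Z)=\mathfrak m$ (the Lyapunov equation) is $\lesssim\mathfrak M_1(1-\gamma)^{-2}Rp/(T-\tau)$.

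Collecting the three estimates gives, on the good event, $\mathcal F(\tau)\ge c\,\mathfrak m\|\Theta_1-\Theta_2\|_2^2-C(1-\gamma)^{-3}(\mathfrak M_1\vee\mathfrak M_2)Rp(1+\cdots)/(Tq^2(\tau/T))$, where the logarithmic corrections come partly from the threshold $H_{\alpha,\tau}$ (contributing $\log(8|\mathcal T|/\alpha)$) and partly from the concentration events above (contributing $\log(1/\beta)$); on the other hand $H_{\alpha,\tau}\asymp C'Rp(1+\cdots)/(Tq^2(\tau/T))$ with $C'$ a multiple of $\max(\|\Sigma_Z\|_{\op},\kappa^2(\Sigma)\|\Sigma\|_{\op}(1-\gamma)^{-2})\le\mathfrak M_2(1-\gamma)^{-2}$. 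Hence taking the absolute constant in $\Xi=C^{*}(1-\gamma)^{-3}(\mathfrak M_1\vee\mathfrak M_2)/\mathfrak m$ large enough makes~\eqref{eq:separation_rate} imply $c\,\mathfrak m\|\Theta_1-\Theta_2\|_2^2\ge H_{\alpha,\tau}+C(1-\gamma)^{-3}(\mathfrak M_1\vee\mathfrak M_2)Rp(1+\cdots)/(Tq^2(\tau/T))$, i.e.\ $\mathcal F(\tau)>H_{\alpha,\tau}$, off an event of probability at most $\beta$, which is the assertion. I expect the main obstacle to be keeping the dependence on $\Sigma_1,\Sigma_2,\Sigma_Z,\Sigma,\gamma_1,\gamma_2$ sharp enough to reproduce exactly the constants $\mathfrak M_1,\mathfrak M_2,\mathfrak m$ — in particular the extra $(1-\gamma)^{-1}$ that appears in the boundary regime $T-\tau\asymp p$, where the $(p+\log(1/\beta))/(T-\tau)$ correction inside $\|\mX_{\ge\tau}\|_{\op}^2$ is no longer negligible — and the reduction $T-\tau\gtrsim p$ is precisely what lets one bypass the genuinely ill-posed case $T-\tau\le p$, which would otherwise require carrying $\mX_{\ge\tau}$-weighted nuclear norms throughout.
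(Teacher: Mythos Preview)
Your approach is correct and follows the same overall scheme as the paper --- lower-bound $\mathcal F(\tau)\ge\varphi_2^{\tau}(\mX,\widehat\Theta_1^{\tau})-\varphi_2^{\tau}(\mX,\Theta_2)$, split into quadratic, cross, and penalty pieces, and control each on a high-probability event --- but it diverges from the paper in two places that are worth flagging.

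First, the paper treats both regimes $T-\tau\le p$ and $T-\tau>p$ explicitly; you instead observe that $\|\Theta_1-\Theta_2\|_2^2\le 8R\gamma^2$ together with~\eqref{eq:separation_rate} and $\Xi\ge C^*(1-\gamma)^{-3}$ force $T-\tau>p$ once $C^*$ is large, so the boundary case is vacuous. This is a legitimate simplification the paper does not exploit. Second, for the quadratic lower bound the paper invokes the Hanson--Wright-based Lemma~\ref{lem:low_bound_sampling_operator} (event~$\mathcal C$), which needs the side condition~\eqref{eq:deltacond} on $\|\Theta_1-\Theta_2\|_2$; you instead use $\|(\Theta_1-\Theta_2)\mX_{\ge\tau}\|_2^2\ge\sigma_{\min}(\mX_{\ge\tau}\mX_{\ge\tau}^{\T})\|\Theta_1-\Theta_2\|_2^2$ together with the uniform bound~\eqref{eq:sigma_min}, which is available precisely because you have already reduced to $T-\tau>p$. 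Your route is shorter and avoids Lemma~\ref{lem:low_bound_sampling_operator} altogether; the paper's route has the advantage of covering the near-boundary case directly (at the price of checking~\eqref{eq:deltacond}), though as you note this gains nothing once the separation condition is in force. One caveat common to both arguments: the post-change segment $(X_\tau,\dots,X_{T-1})$ is not exactly the stationary $\mathrm{VAR}_p(\Theta_2,\Sigma_Z)$ process (the initial law is $\Sigma_1$, not $\Sigma_2$), so strictly speaking neither Lemma~\ref{lem:upper_bound_sampling_operator} nor Lemma~\ref{lem:low_bound_sampling_operator} applies verbatim --- the paper glosses over this too, and the fix is routine, but your sketch should not claim more precision here than the paper does.
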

	
	\begin{proof} We consider the case of $\mathcal T=\cpset_T$. Let $\tau\in\cpset_T$ be the true change-point. 
		The type II error of the test $\psi_{\alpha}$ is defined as 
		\begin{align*}
		\beta(\psi_{\alpha},\mathcal{R}_{\rho , \cpset_{T}}) &= \sup_{\tau \in\cpset_T} \sup_{(\Theta_{1}, \Theta_{2}) \in V_{p}(\tau,\mathcal R_{p,\cpset_T})} \P_{\Theta_{1},\Theta_{2}}^\tau\Bigl\{\psi_{\alpha}=0\Bigr\}\\
		&=\sup_{\tau \in\cpset_T} \sup_{(\Theta_{1}, \Theta_{2}) \in V_{p}(\tau,\mathcal R_{p,\cpset_T})} \P_{\Theta_{1},\Theta_{2}}^\tau\Bigr\{\max_{t\in \mathcal T} \frac{\mathcal F(t)}{H_{\alpha,t}} <1\Bigr\}\\
		&\le \inf_{t\in \mathcal T}  \sup_{\tau \in\cpset_T} \sup_{(\Theta_{1}, \Theta_{2}) \in V_{p}(\tau,\mathcal R_{p,\cpset_T})} \P_{\Theta_{1},\Theta_{2}}^\tau\Bigr\{ \mathcal F(t) < H_{\alpha,t}\Bigr\}\\
		&\le  \sup_{\tau \in\cpset_T} \sup_{(\Theta_{1}, \Theta_{2}) \in V_{p}(\tau,\mathcal R_{p,\cpset_T})} \P_{\Theta_{1},\Theta_{2}}^\tau\Bigr\{ \mathcal F(\tau) < H_{\alpha,\tau}\Bigr\}.
		\end{align*}
		We have to show that if the minimal jump energy satisfies condition~\eqref{eq:separation_rate}, then the type II error of the test is at most $\beta$, $\P_{\Theta_{1},\Theta_{2}}^\tau\Bigr\{ \mathcal F(\tau) < H_{\alpha,\tau}\Bigr\}\le \beta$. As in the case of the type I error, we have to consider four cases of location of the change-point $\tau$ with respect to $T/2$ and the boundaries. 
		
		Consider the case (a) of $\tau\ge T/2$ and $T-\tau\le p$. Let us introduce the events
		\begin{gather*}
		\mathcal{A}=\left \{ \| \mathfrak{Z}_{\ge t}\|_{\mathrm{op}} \leq c_1^* \sqrt{\| \Sigma_{Z}\|_{\mathrm{op}} }\left ( \sqrt{T-\tau}+\sqrt{p}+\sqrt{\log  \frac{8}{\beta}}\right)\right  \},\quad 
		\mathcal{B}_1=\left \{ \| \widehat{\Theta}_1^t-\Theta_1\|_{2}^{2}\leq c^*\frac{\kappa^2(\Sigma_1)}{(1-\gamma)^2}\frac{Rp}{t} \right \},\\  \mathcal{C}=\left \{ \frac 1{T-\tau}\|(\Theta_1-\Theta_2)\mX_{\ge \tau}\|_2^2\ge \frac{\sigma_{\min}(\Sigma_Z)\vee \sigma_{\min}(\Sigma_2)}{2(\gamma^2+1)}\|\Theta_1-\Theta_2\|_2^2\right \}.
		\end{gather*}
		Lemma~\ref{lem:bound_stoch_term1} implies that $\P(\mathcal{A}) \geq 1-\beta/4$.
		Proposition~\ref{cor:bound_risk_estimation2} implies $\P(\mathcal{B}_1) \geq 1-c_{2}\exp(-c_{3}p)\ge  1-\beta/4$ for sufficiently large $p$. Moreover, Lemma~\ref{lem:low_bound_sampling_operator} implies that $\P(\mathcal{C}) \geq 1-\beta/4$ under the condition 
		\begin{equation}\label{eq:deltacond}
		\|\Theta_{2} - \Theta_{1}\|_{2}> \frac{4\gamma(1+\gamma^2)\sigma_{\max}(\Sigma_2)}{\sigma_{\min}(\Sigma_{Z})\wedge \sigma_{\min}(\Sigma_2)} \sqrt{\frac 1c\frac{\log (8/\beta)}{T-\tau}}.
		\end{equation}
		Note that \eqref{eq:deltacond} always holds under condition~\eqref{eq:separation_rate}. We can write 
		$$
		\P_{\Theta_{1},\Theta_{2}}^\tau\Bigr\{ \mathcal F(\tau) < H_{\alpha,\tau}\Bigr\}
		\le \P \Bigl\{ \{\mathcal F(\tau) < H_{\alpha,\tau}\}\cap \mathcal A\cap \mathcal B_1\cap\mathcal C \}+\P(\mathcal A^c)+\P(\mathcal B_1^c)+\P(\mathcal C^c).
		$$
		To bound the type II error by $\beta$, we need to show that the first probability is bounded by $\beta/4$.
		
		By definition of  $\widehat \Theta_2^\tau$, the test statistic can be bounded by 
		$$
		\mathcal{F}(\tau)=\varphi^\tau_2(\mathfrak X,\widehat \Theta_1^\tau)- \varphi^\tau_2(\mathfrak X,\widehat \Theta_2^\tau) \geq \varphi^\tau_2(\mathfrak X,\widehat \Theta_1^\tau)- \varphi^\tau_2(\mathfrak X, \Theta_2) 
		$$
		Taking into account that $\varphi^\tau_2(\mathfrak X, \Theta_2) = \dfrac{1}{T-\tau}\sum\limits_{i=\tau}^{T-1}\| Z_{i}\| _{2}^{2}+\lambda^\tau_2\| \Theta_{2}\mX_{\ge \tau}\| _{*}$ and 
		\begin{align*}
		\varphi^\tau_2(\mathfrak X,\widehat \Theta_1^\tau)&= \dfrac{1}{T-\tau} \sum\limits_{i=\tau}^{T-1}\| \left (\Theta_{2}-\widehat\Theta_1^\tau\right )X_{i}\| _{2}^{2} + \dfrac{1}{T-\tau}\sum\limits_{i=\tau}^{T-1}\| Z_{i}\| _{2}^{2} \\
		&+ \dfrac{2}{T-t}\left\langle \left (\Theta_2-\widehat\Theta_1^\tau\right )\mX_{\ge \tau}, \mathfrak{Z}_{-\tau}\right\rangle + \lambda_{2}^{\tau} \| \widehat\Theta_1^\tau\mX_{\ge \tau}\| _{*}
		\end{align*}
		we obtain
		\begin{align*}
		\mathcal{F}(\tau) &\geq \dfrac{1}{T-\tau} \sum\limits_{i=\tau}^{T-1}\| \left (\Theta_{2}-\widehat\Theta_1^\tau\right )X_{i}\| _{2}^{2}
		+ \dfrac{2}{T-t}\left\langle \left (\Theta_2-\widehat\Theta_1^\tau\right )\mX_{\ge \tau}, \mathfrak{Z}_{-\tau}\right\rangle  \\
		&+ \lambda_{2}^{\tau} \left( \| \widehat\Theta_1^\tau \mX_{\ge \tau}\| _{*}-\| \Theta_{2}\mX_{\ge \tau}\| _{*}\right):=A+B+C.
		\end{align*}
		Note that for any $i$ we have 
		$\dfrac{1}{2} \| \left(\Theta_{2}-\Theta_1\right)X_{i}\| _{2}^{2} \leq \| \left (\Theta_{2}-\widehat\Theta_1^\tau\right )X_{i}\| _{2}^{2} + \| \left (\Theta_{1}-\widehat\Theta_1^\tau\right )X_{i}\| _{2}^{2}$.
		Thus
		$$
		A\geq \dfrac{1}{2(T-\tau)}\sum\limits_{i=\tau}^{T-1} \| \left (\Theta_{2}-\Theta_1\right )X_{i}\| _{2}^{2} - \dfrac{1}{T-\tau} \sum\limits_{i=\tau}^{T-1} \| \left (\Theta_{1}-\widehat\Theta_1^\tau\right )X_{i}\| _{2}^{2}.
		$$
		Next, by the H\"{o}lder inequality,
		$$
		B\geq -\dfrac{2}{T-\tau}\| \left (\widehat\Theta_1^\tau-\Theta_{1}\right )\mX_{\ge \tau}\|_{*}\| \mathfrak{Z}_{-\tau}\|_{\mathrm{op}}-	\dfrac{2}{T-\tau}\| \left ({\Theta}_2-\Theta_{1}\right )\mX_{\ge \tau}\|_{*}\| \mathfrak{Z}_{-\tau}\|_{\mathrm{op}}.
		$$
		Since  $\lambda^\tau_2=\dfrac{6c_1^* \sqrt{\| \Sigma_{Z}\|_{\mathrm{op}}}  \sqrt{p}}{T-\tau}$,  using Lemma~\ref{lem:bound_stoch_term1}  we can show using $p\ge \log (8/\beta)$ and $T-\tau\le p$  that $\lambda^\tau_2 \geq \dfrac{2\| \mathfrak{Z}_{-\tau}\|_{\mathrm{op}}}{T-\tau}$ holds under the event $\mathcal A$. Consequently, given $\mathcal A$,
		\begin{align*}
		B &\geq -\lambda^{\tau}_{2}\| \left (\widehat\Theta_1^\tau-\Theta_{1}\right )\mX_{\ge \tau}\|_{*} - \lambda^{\tau}_{2}  \| \left (\Theta_{2}-\Theta_{1}\right )\mX_{\ge \tau}\|_{*}  \\
		&=-\lambda^{\tau}_{2} \left(  \| \mathbf {Pr}^{\bot}_{\Theta_1\mX_{\ge \tau}}\left [\left (\Theta_1-\widehat{\Theta}_1\right )\mX_{\ge \tau}\right ]\|_{*} + \| \mathbf {Pr}_{\Theta_1\mX_{\ge \tau}}\left [\left (\Theta_1-\widehat{\Theta}_1\right )\mX_{\ge \tau}\right ]\|_{*} +  \| \left (\Theta_{2}-\Theta_{1}\right )\mX_{\ge \tau}\|_{*}  \right)
		\end{align*}
		Similarly to Lemma~\ref{lem:typeIerror} we can bound the term $C$ using the triangle inequality and \eqref{triangle_inequality_trace_norm}:
		$$
		C\geq \lambda^{\tau}_{2}\left (\| \mathbf {Pr}^{\bot}_{\Theta_1\mX_{\ge \tau}}\left [\left (\Theta_1-\widehat\Theta_1^\tau\right )\mX_{\ge \tau}\right ]\|_{*} -\| \left (\Theta_{2}-\Theta_{1}\right )\mX_{\ge \tau}\|_{*}-\| \mathbf {Pr}_{\Theta_1\mX_{\ge \tau}}\left [\left (\Theta_1-\widehat{\Theta}_1\right )\mX_{\ge \tau}\right ]\|_{*}\right ).
		$$
		Gathering all these bounds, we obtain that given $\mathcal A\cap \mathcal B_1\cap\mathcal C$,
		\begin{align*}
		\mathcal{F}(t) &\geq 
		\frac{1}{2(T-\tau)}\sum_{i=\tau}^{T-1}\| \left (\Theta_{2}-\Theta_1\right )X_{i}\| _{2}^{2}-\frac{1}{T-\tau}\sum_{i=\tau}^{T-1}\| \left (\Theta_{1}-\widehat\Theta_1^\tau\right )X_{i}\| _{2}^{2}
		\\&  -2\lambda^{\tau}_{2} \| \left (\Theta_{2}-\Theta_{1}\right )\mX_{\ge \tau}\|_{*}-2\lambda^{\tau}_{2}\| \mathbf {Pr}_{\Theta_1\mX_{\ge \tau}}\left (\Theta_1-\widehat\Theta_1^\tau\right )\mX_{\ge \tau}\|_{*}.
		\end{align*}
		Using the trivial inequality $a^2+b^2\ge 2ab$ and the fact that $\rank(\Theta_2-\Theta_1)\le 2R$, we can show that 
		\begin{align*}
		2\lambda^{\tau}_{2} \| \left (\Theta_{2}-\Theta_{1}\right )\mX_{\ge \tau}\|_{*} &\leq 2 \lambda^{\tau}_{2}  \sqrt{2 \rank ((\Theta_{2}-\Theta_{1}) \mX_{\ge \tau})}  \| \left (\Theta_{2}-\Theta_{1}\right )\mX_{\ge \tau}\|_{2}\\
		& \leq 4 \lambda^{\tau}_{2} \sqrt{R } \| \left (\Theta_{2}-\Theta_{1}\right )\mX_{\ge \tau}\|_{2}
		\\
		&\leq 8 \left( \lambda^{\tau}_{2} \right)^{2}R(T-\tau) + \dfrac{\| \left (\Theta_{2}-\Theta_{1}\right )\mX_{\ge \tau}\|_{2}^{2}}{4(T-\tau)}
		\end{align*}
		and
		\begin{align*}
		2\lambda^{\tau}_{2}\| \mathbf {Pr}_{\Theta_1\mX_{\ge \tau}}\left (\Theta_1-\widehat\Theta_1^\tau\right )\mX_{\ge \tau}\|_{*} 
		&\leq 2 \lambda^{\tau}_{2}  \sqrt{2\rank(\Theta_{1} \mX_{\ge \tau})}  \| \left (\Theta_{1}-\widehat\Theta_1^\tau\right )\mX_{\ge \tau}\|_{2}
		\\
		&\leq 2 \lambda^{\tau}_{2}  \sqrt{2 R}  \| \left (\Theta_{1}-\widehat\Theta_1^\tau\right )\mX_{\ge \tau}\|_{2}\\
		&\leq  2\left( \lambda^{\tau}_{2}\right)^{2}R(T-\tau) + \frac1{T-\tau} \| \left (\Theta_{1}-\widehat\Theta_1^\tau\right )\mX_{\ge \tau}\|_{2}^{2}
		\end{align*}
		Consequently, gathering the bounds, we get that given $\mathcal A\cap \mathcal B_1\cap\mathcal C$,
		\begin{align*}
		\mathcal{F}(\tau)	
		\geq -\frac{2}{T-\tau}\| \left (\Theta_{1}-\widehat\Theta_1^\tau\right )\mX_{\ge \tau}\|^{2}_{2}	+\frac{1}{4(T-\tau)}\| \left (\Theta_1-\Theta_{2}\right )\mX_{\ge \tau}\|^{2}_{2}
		-10\left (\lambda^{\tau}_{2}\right )^{2}R(T-\tau)\\	
		\ge -\frac2{T-\tau}  \| \mX_{\ge \tau}\| _{\op}^{2}	 \| \Theta-\widehat\Theta_1^\tau\| _{2}^{2} +\frac{1}{4(T-\tau)}\| \left (\Theta_1-\Theta_{2}\right )\mX_{\ge \tau}\|^{2}_{2}
		-10\left (\lambda^{\tau}_{2}\right )^{2}R(T-\tau)
		\end{align*}
		By Proposition~\ref{cor:bound_risk_estimation2} and Lemma~\ref{lem:low_bound_sampling_operator}, we have, given $\mathcal B_1$ and $\mathcal C$, 
		$\| \widehat\Theta_1^\tau-\Theta_1\|_{2}^{2}\leq c^*\dfrac{\kappa^2(\Sigma_{1})}{(1-\gamma)^2}\dfrac{Rp}{\tau}$
		and 
		\begin{align*}
		\dfrac{1}{4(T-\tau)}\| \left (\Theta_1-\Theta_{2}\right )\mX_{\ge \tau}\|^{2}_{2} &\geq \dfrac{\sigma_{\min}(\Sigma_{Z})\wedge \sigma_{\min}(\Sigma_2)}{8(\gamma^{2}+1)} \| \Theta_1-\Theta_{2}\|^{2}_{2}
		\end{align*}
		which implies, given the event  $\mathcal A\cap \mathcal B_1\cap\mathcal C$,
		$$
		\mathcal F(\tau)\ge -\frac{2c^* Rp}{\tau(T-\tau)} \dfrac{\kappa^2(\Sigma_{1})}{(1-\gamma)^2}\| \mX_{\ge \tau}\| _{\mathrm{op}}^{2} +  \dfrac{\sigma_{\min}(\Sigma_{Z})\wedge \sigma_{\min}(\Sigma_2)}{8(\gamma^{2}+1)} \| \Theta_1-\Theta_{2}\|^{2}_{2} - 360 \frac{Rp \|\Sigma_Z\|_{\mathrm{op}} }{T-\tau}.
		$$
		Thus, to show that $\P\Bigl\{\mathcal F(\tau)\le H_{\alpha,\tau}\Bigr\}\le\beta$ we need to prove that under assumption~\eqref{eq:separation_rate} 
		\begin{equation}\label{eq:beta4}
		\P\Bigl\{-\frac{2c^*Rp}{\tau(T-\tau)}\dfrac{\kappa^2(\Sigma_{1})}{(1-\gamma)^2} \| \mX_{\ge \tau}\| _{\mathrm{op}}^{2} +  \dfrac{\sigma_{\min}(\Sigma_{Z})\wedge \sigma_{\min}(\Sigma_{2})}{2(\gamma^{2}+1)} \| \Theta_1-\Theta_{2}\|^{2}_{2} - 360 \frac{Rp \|\Sigma_Z\|_{\mathrm{op}} }{T-\tau}\le H_{\alpha,\tau}\Bigr\}\le \beta/4.
		\end{equation}
		Using Lemma~\ref{lem:upper_bound_sampling_operator} we can show that \eqref{eq:beta4} holds under the condition 
		\begin{align}
		\dfrac{\sigma_{\min}(\Sigma_{Z})\wedge \sigma_{\min}(\Sigma_{2})}{2(\gamma^{2}+1)} \| \Theta_1-\Theta_{2}\|^{2}_{2}&\ge H_{\alpha,\tau}+ 360 \frac{Rp \|\Sigma_Z\|_{\mathrm{op}} }{T-\tau} \nonumber\\
		&+ \frac{4c^*Rp}{\tau}\dfrac{\kappa^2(\Sigma_{1})}{(1-\gamma)^2}  \|\Sigma_2\|_{\mathrm{op}} \left(1+\frac{c_0}{1-\gamma}\frac{p+\log(8/\beta)}{T-\tau}\right).\label{eq:beta5}
		\end{align}
		Using the definition of $H_{\alpha,\tau}$ and the inequality
		$$
		360 \dfrac{Rp \|\Sigma_Z\|_{\mathrm{op}} }{T-\tau} +  \frac{2c^*Rp}{\tau}\dfrac{\kappa^2(\Sigma_{1}) \|\Sigma_2\|_{\mathrm{op}}}{(1-\gamma)^2} \le \max(360,4c^*)\max\Bigl(\|\Sigma_Z\|_{\mathrm{op}},\dfrac{\kappa^2(\Sigma_{1}) \|\Sigma_2\|_{\mathrm{op}}}{(1-\gamma)^2}\Bigr) \frac{Rp}{Tq^2(\tau/T)}
		$$
		we can bound the right-hand side of  inequality~\eqref{eq:beta5} from above by
		$$
		C \dfrac{Rp  }{T q^2(\tau/T)} \max\Bigl(\|\Sigma_Z\|_{\mathrm{op}}, \frac{\kappa^2(\Sigma)\|\Sigma\|_{\mathrm{op}}}{(1-\gamma)^2},\frac{\kappa^2(\Sigma_1)\|\Sigma_2\|_{\mathrm{op}}}{(1-\gamma)^2}\Bigr) \left(1+\frac{c_0}{1-\gamma} \frac{2p+\log(8|\mathcal T|/\alpha)+\log(8/\beta)}T\right),
		$$
		where $C$ is a constant depending only on $c^*$, $c_1^*$ and $c_2^*$.
		Applying the condition $2p<T$ we obtain that~\eqref{eq:beta5}  is guaranteed if  assumption~\eqref{eq:separation_rate} on the norm of the matrix jump is satisfied. This implies~\eqref{eq:beta4}.
		
		Now we have to bound the type II error in case of $\dfrac{T}{2} \leq \tau < T-p $. Set 
		$$
		\mathcal{A}'=\left \{\frac1{T-t} \| \mathfrak {Z}_{\ge t}\mathfrak{ X}_{\ge \tau}^{\T}\|_{\mathrm{op}} \leq \frac{c_2^*  \|\Sigma\|_{\mathrm{op}}}{1-\gamma}\sqrt{ \frac{p}{T-\tau}}\right \}.
		$$
		Lemma~\ref{lem:bound_stoch_term2} implies that $\P(\mathcal{A}') \geq 1-k_1\exp(-k_2p)\ge 1-\beta/4$ if $p\ge k_2^{-1}\log (4k_1/\beta)$. We have to show that $\P\{\{\mathcal F(\tau)<H_{\alpha,\tau}\}\cap \mathcal A'\cap \mathcal B_1\cap \mathcal C\}\le \beta/4$. 
		
		Using the same reasoning as above we obtain 
		\begin{align*}
		\mathcal{F}(\tau)&= \varphi^\tau_2(\mathfrak X,\widehat \Theta_1^\tau)- \varphi^\tau_2(\mathfrak X,\widehat \Theta_2^\tau) \geq \varphi^\tau_2(\mathfrak X,\widehat \Theta_1^\tau)- \varphi^\tau_2(\mathfrak X, \Theta_2^\tau) 
		\\
		&=\frac{1}{T-\tau}\sum_{i=\tau}^{T-1}\| \left (\Theta_{2}-\widehat\Theta_1^\tau\right )X_{i}\| _{2}^{2}-\frac{2}{T-\tau}\left \langle	\widehat\Theta_1^\tau-\Theta_{2},\mathfrak{Z}_{-\tau}\mX_{\ge \tau}^{\T}\right \rangle +\lambda^{\tau}_{2}\left (\| \widehat\Theta_{1}(\tau)\|_{*}-\| \Theta_{2}\|_{*}\right )
		\\&
		\geq \frac{1}{2(T-\tau)}\sum_{i=\tau}^{T-1}\| \left (\Theta_{2}-\Theta_1\right )X_{i}\| _{2}^{2}-\frac{1}{T-\tau}\sum_{i=\tau}^{T-1}\| \left (\Theta_{1}-\widehat\Theta_1^\tau\right )X_{i}\| _{2}^{2}+
		\\&
		\hskip 0.5 cm
		-\frac{2}{T-\tau}\| \widehat\Theta_1^\tau-\Theta_{1}\|_{*}\| \mathfrak{Z}_{-\tau}\mX_{\ge \tau}^{\T}\|_{\mathrm{op}}-	\frac{2}{T-\tau}\| {\Theta}_2-\Theta_{1}\|_{*}\| \mathfrak{Z}_{-\tau}\mX_{\ge \tau}^{\T}\|_{\mathrm{op}}
		\\&
		\hskip 0.5 cm
		-\lambda^{\tau}_{2}\left (\| \Theta_{2}-\Theta_{1}\|_{*}+\| \mathbf {Pr}_{\Theta_1}\left [\Theta_1-\widehat{\Theta}_1\right ]\|_{*}-\| \mathbf {Pr}^{\bot}_{\Theta_1}\left [\Theta_1-\widehat{\Theta}_1\right ]\|_{*}\right ).
		\end{align*}
		Remind that  $\lambda^\tau_2=2c_2^*\dfrac{\|\Sigma_2 \|_{\mathrm{op}}}{1-\gamma}\sqrt{  \dfrac p{T-\tau}}$. Using Lemma~\ref{lem:bound_stoch_term2}, we obtain that given $\mathcal A'$, 
		$$
		\dfrac{2}{T-\tau}\| \mathfrak{Z}_{-\tau}\mX^{\T}_{\ge t}\|_{\mathrm{op}}\leq \dfrac{2c_2^*\|\Sigma_2\|_{\mathrm{op}}}{1-\gamma}\sqrt{\dfrac{p}{T-\tau}}=\lambda_{2}^\tau.
		$$
		By Lemma~\ref{lem:low_bound_sampling_operator} provided the condition~\eqref{eq:deltacond}, we have that  given the event $\mathcal C$ 
		$$
		\dfrac{1}{2(T-\tau)}\sum\limits_{i=\tau}^{T-1}\| \left (\Theta_{2}-\Theta_1\right )X_{i}\| _{2}^{2} \geq  \dfrac{\sigma_{\min}(\Sigma_{Z})\wedge \sigma_{\min}(\Sigma_2)}{4(1+\gamma^{2})}  \| \Theta_{2}-\Theta_1\| _{2}^{2}.
		$$
		Consequently,  given the event $\mathcal A'\cap \mathcal B_1\cap \mathcal C$, 
		\begin{align*}
		\mathcal{F}(\tau) &\geq 
		\frac{1}{2(T-\tau)}\sum_{i=\tau}^{T-1}\| \left (\Theta_{2}-\Theta_1\right )X_{i}\| _{2}^{2}-\frac{1}{T-\tau}\sigma_{\max}(\mX^{\T}_{\ge t}\mX^{\T}_{\ge t})\|  \Theta_{1}-\widehat\Theta_1^\tau\|_2^2
		\\& 
		-2\lambda^{\tau}_{2}\left (\| \Theta_{2}-\Theta_{1}\|_{*}+\| \mathbf {Pr}_{\Theta_1}\left (\Theta_1-\widehat{\Theta}_1\right )\|_{*}\right )\\
		&\ge \dfrac{\sigma_{\min}(\Sigma_{Z})\wedge \sigma_{\min}(\Sigma_2)}{4(1+\gamma^{2})}  \| \Theta_{2}-\Theta_1\| _{2}^{2} -\dfrac{1}{T-\tau} \| \mX_{\ge \tau} \|_{\op}^{2} c^*\frac{Rp}\tau \frac{\kappa^2(\Sigma_1)}{(1-\gamma)^2}
		\\& 
		-2\lambda^{\tau}_{2}\left (\| \Theta_{2}-\Theta_{1}\|_{*}+\| \mathbf {Pr}_{\Theta_1}\left (\Theta_1-\widehat \Theta_1^\tau\right )\|_{*}\right ).
		\end{align*}
		Next, as in the proof of the previous case we can obtain the following bounds given $\mathcal B_1$, 
		\begin{align*}
		2\lambda^{\tau}_{2} \| \Theta_{2}-\Theta_{1}\|_{*} 
		& \leq 2 \lambda^{\tau}_{2}  \sqrt{2R}  \| \Theta_{2}-\Theta_{1}\|_{2}\\
		& \leq \dfrac{16 R(1+\gamma^{2})(\lambda^{\tau}_{2})^{2}}{  \sigma_{\min}(\Sigma_{Z})\wedge \sigma_{\min}(\Sigma_2)}  + \dfrac{ \sigma_{\min}(\Sigma_{Z})\wedge \sigma_{\min}(\Sigma_2)}{8(1+\gamma^{2})} \| \Theta_{2}-\Theta_{1}\|_{2}^{2}\\
		&= \dfrac{64 (c_{2}^*)^2(1+\gamma^{2})\| \Sigma_2 \|^{2}_{\mathrm{op}}}{ (1-\gamma)^{2} (\sigma_{\min}(\Sigma_{Z})\wedge \sigma_{\min}(\Sigma_2))}\frac{Rp}{T-\tau}+ \dfrac{  \sigma_{\min}(\Sigma_{Z})\wedge \sigma_{\min}(\Sigma_2)}{8(1+\gamma^{2})} \| \Theta_{2}-\Theta_{1}\|_{2}^{2}
		\end{align*}
		and
		\begin{align*}
		2\lambda^{\tau}_{2}\| \mathbf {Pr}_{\Theta_1}\left (\Theta_1-\widehat \Theta_1^\tau\right )\|_{*}
		&\leq 2 \lambda^{\tau}_{2} \sqrt{2 R}  \| \Theta_{1}-\widehat\Theta_1^\tau\|_{2}\\
		&\le 2c_2^* \sqrt{2c^*}\frac{\|\Sigma_2\|_{\op}\kappa(\Sigma_1)}{(1-\gamma)^2} \frac{Rp}{Tq(\tau/T)}\le \frac{\sqrt 2}2c_2^* \sqrt{c^*}\frac{\|\Sigma_2\|_{\op}\kappa(\Sigma_1)}{(1-\gamma)^2} \frac{Rp}{Tq^2(\tau/T)}.
		\end{align*}
		Consequently, gathering all the above inequalities, we obtain 
		\begin{align*}
		\Bigl\{\{\cF(\tau)<H_{\alpha,\tau}\}\cap \mathcal A'\cap \mathcal B_1\cap \mathcal C\Bigr\}&\subseteq	\Bigl\{H_{\alpha,\tau}>\cF(\tau)\ge \dfrac{\sigma_{\min}(\Sigma_{Z})\wedge \sigma_{\min}(\Sigma_2)}{8(1+\gamma^{2})}  \| \Theta_{2}-\Theta_1\| _{2}^{2}\Bigr.\\
		& -\frac{2c^* Rp}{\tau(T-\tau)}\dfrac{\kappa^2(\Sigma_{1})}{(1-\gamma)^2} \| \mX_{\ge \tau}\| _{\mathrm{op}}^{2} -  \frac{c_2^* \sqrt {2 c^*}}2\frac{\|\Sigma_2\|_{\op}\kappa(\Sigma_1)}{(1-\gamma)^2} \frac{Rp}{Tq^2(\tau/T)}\\
		&\Bigl.- \dfrac{64 (c_{2}^*)^2(1+\gamma^{2})\| \Sigma_2 \|^{2}_{\mathrm{op}}}{ (1-\gamma)^{2}( \sigma_{\min}(\Sigma_{Z})\wedge \sigma_{\min}(\Sigma_2))}\frac{Rp}{T-\tau}\Bigr\}.
		\end{align*}
		Thus, to show that $\P_{\Theta_1,\Theta_2}^\tau\Bigl\{\mathcal F(\tau)\le H_{\alpha,\tau}\Bigr\}\le\beta$ we need to prove that under assumption~\eqref{eq:separation_rate} 
		\begin{align}
		\P\Bigl\{&\frac{2c^* Rp}{\tau(T-\tau)}\dfrac{\kappa^2(\Sigma_{1})}{(1-\gamma)^2} \| \mX_{\ge \tau}\| _{\mathrm{op}}^{2} \ge  \dfrac{\sigma_{\min}(\Sigma_{Z})\wedge \sigma_{\min}(\Sigma_2)}{2(\gamma^{2}+1)} \| \Theta_1-\Theta_{2}\|^{2}_{2} \Bigr.\nonumber\\
		&\Bigl.-   \frac{c_2^* \sqrt {2 c^*}}2\frac{\|\Sigma_2\|_{\op}\kappa(\Sigma_1)}{(1-\gamma)^2} \frac{Rp}{Tq^2(\tau/T)}- \dfrac{64 c_{1}^2(1+\gamma^{2})\| \Sigma_2 \|^{2}_{\mathrm{op}}}{ (1-\gamma)^{2} (\sigma_{\min}(\Sigma_{Z})\wedge \sigma_{\min}(\Sigma_2))}\frac{Rp}{T-\tau}- H_{\alpha,\tau}\Bigr\}\le \beta/4.\label{eq:beta42}
		\end{align}
		By Lemma~\ref{lem:upper_bound_sampling_operator} and by  the definition of $H_{\alpha,\tau}$, this bound holds if 
		\begin{align*}
		\dfrac{\sigma_{\min}(\Sigma_{Z})\wedge \sigma_{\min}(\Sigma_2)}{2(\gamma^{2}+1)} \| \Theta_1-\Theta_{2}\|^{2}_{2} &\ge \frac{c_2^* \sqrt {2 c^*}}2\frac{\|\Sigma_2\|_{\op}\kappa(\Sigma_1)}{(1-\gamma)^2} \frac{Rp}{Tq^2(\tau/T)}+ \dfrac{64 c_{1}^2(1+\gamma^{2})\| \Sigma_2 \|^{2}_{\mathrm{op}}}{ (1-\gamma)^{2} (\sigma_{\min}(\Sigma_{Z})\wedge \sigma_{\min}(\Sigma_2))}\frac{Rp}{T-\tau} \\
		&+ C\|\Sigma\|_{\mathrm{op}} \frac{\kappa^2(\Sigma)}{(1-\gamma)^2} \frac{Rp}{Tq^2(\tau/T)} \left(1+\frac 2{1-\gamma} \left(1+\frac{\log(8|\mathcal T|/\alpha)}T\right)\right)\\
		&+ \frac{2c^* \kappa^2(\Sigma_1)}{(1-\gamma)^2}\frac{Rp}{\tau} \|\Sigma_2\|_{\mathrm{op}} \left(1+\frac2{1-\gamma}\left(1+\frac{\log(8/\beta)}{T-\tau}\right)\right).
		\end{align*}
		In what follows $C$ denotes an absolute constant. Using the identity $\tau^{-1}(T-\tau)^{-1}=1/(T^2q^2(\tau/T))$ and the bound
		\begin{align*}
		\dfrac{64 c_{1}^2(1+\gamma^{2})\| \Sigma_2 \|^{2}_{\mathrm{op}}}{ (1-\gamma)^{2} (\sigma_{\min}(\Sigma_{Z})\wedge \sigma_{\min}(\Sigma_2))}\frac{Rp}{T-\tau}+\frac{1+\gamma}{1-\gamma}\frac{2c^* \kappa^2(\Sigma_1)\|\Sigma_2\|_{\mathrm{op}}}{(1-\gamma)^2}\frac{Rp}{\tau}  &\le C\frac{\mathfrak M_1\vee \mathfrak M_2}{(1-\gamma)^3}\frac{Rp}{Tq^2(\tau/T)}.
		\end{align*}
		We can show that~\eqref{eq:beta42} holds if 
		$$
		\dfrac{\sigma_{\min}(\Sigma_{Z})\wedge \sigma_{\min}(\Sigma_2)}{2(\gamma^{2}+1)} \| \Theta_1-\Theta_{2}\|^{2}_{2} \ge C\frac{\mathfrak M_1\vee \mathfrak M_2}{(1-\gamma)^3}\frac{Rp}{Tq^2(\tau/T)} \left(1+\frac{\log(8|\mathcal T|/\alpha)+\log(8/\beta)}{T}\right). 
		$$
		which holds if assumption~\eqref{eq:separation_rate} on the norm of the matrix jump is satisfied.
	\end{proof}

	\section{Proof of the lower bound}
	Our goal is to show that there exists a sequence $\rho_{p,T}$ such that if $\mathcal R_{p,T}=o( \rho_{p,T})$  as $p,T\to\infty$, then 
	$$
	\liminf_{p,T\to\infty} \inf_{\psi\in\Psi_\alpha}\beta(\psi,\mathcal R_{p,T}) \ge 1-\alpha.
	$$
	
	\begin{proof}[Proof of Theorem~\ref{th:LowerBoundTesting}]\ 
		\begin{description}	
			\item[\it Step 1. Reduction to Bayesian testing.] \ \\
			Let $\mX=(X_0,\dots,X_T)$, where the observations $X_t$ are defined on a probability space $(\Omega, \mathcal A,\P)$ and follow the $\mathrm{VAR}_p(\Theta_1,\Theta_2,\Sigma_Z)$ model~\eqref{eq:var}. Let $\pi_{p,0}^T$ and $\pi_{p,1}^T$ be two families of priors on the parameter $(\Theta^t)_{0\le t<T}$  under $\Hyp_0$ and $\Hyp_1$, respectively such that $\pi_{p,i}^T(\mathcal M_p(R,\gamma))\to 1$, $i=0,1$, $p,T\to\infty$ and $\pi_{p,1}^T(\Theta_1,\Theta_2)\in\mathcal V_{p,\tau}(\rho))\to 1$, $p,T\to\infty$, where $\rho=\mathcal R_{p,T}>0$ is the minimal detectable energy we are interested in. 
			
			Define the mixtures $\P_{\pi_{p,i}^T}=\E_{\pi_{p,i}^T} \P(A)$, $A\in\mathcal A$.
			The standard technique  to prove the lower bound  (see, for example, \citep{Ingster&Suslina:2003}) consists in showing that the testing risk is bounded from below as follows,
			\begin{align}
			\inf_{\psi_{p,T}\in \Psi_\alpha}\beta (\psi_{p,T},\mathcal R_{p,T}) 
			&\ge \beta(\psi_{p,T}^{*}, \P_{\pi_{p,0}^T},\P_{\pi_{p,1}^T})+o(1)\nonumber\\
			&\ge 1-\frac12 \| \P_{\pi_{p,1}^T}-\P_{\pi_{p,0}^T}\|_{\mathrm{TV}}-\alpha+o(1)\nonumber\\
			&\ge 1-\frac12\left(\E_{\pi_{p,0}^T}\left[ \frac{d\P_{\pi_{p,1}^T}}{d\P_{\pi_{p,0}^T}}(\mX)\right]^2-1\right)^{1/2}-\alpha+o(1).\label{eq:lb_general}
			\end{align}
			Here  $\psi_{p,T}^{*}$  is the optimal Bayesian test for  the hypotheses $\Hyp_0:\ \P= \P_{\pi_{p,0}^T}$ against $\Hyp_1:\ \P= \P_{\pi_{p,1}^T}$ and $\beta(\psi_{p,T}^{*}, \P_{\pi_{p,0}^T},\P_{\pi_{p,1}^T})$ its type II error. 
			If, for the appropriately chosen priors $\pi_{p,1}^T$ and $\pi_{p,0}^T$, we can show that 
			$$
			\E_{\pi_{p,0}^T}\left[ \frac{d\P_{\pi_{p,1}^T}}{d\P_{\pi_{p,0}^T}}(\mX)\right]^2\le 1+o(1),\quad p,T\to\infty
			$$
			then \eqref{eq:lb_general} will imply $\inf\limits_{\psi_{p,T}\in \Psi_\alpha}\beta (\psi_{p,T},\mathcal R_{p,T}) \ge1-\alpha+o(1)$ and the lower bound will follow. 
			\item[\it Step 2. Computation of the second moment of likelihood ratio.]\ \\ 
			Suppose that under $\Hyp_0$ the transition matrix parameter is zero, $\Theta^t=0$ $\forall 0\le t<T$  and under the alternative 
			$\Theta^t=\Theta_1=-(1-\tau/T)\Delta\Theta$ for $0\le t\le \tau$ and $\Theta^t=\Theta_2=(\tau/T)\Delta\Theta$ for $\tau+1\le t<T$. Assume that the change matrix $\Delta\Theta=\Theta_2-\Theta_1$  is distributed according to some prior distribution $\mu=\mu_{p,T}$ defined on $\mathcal M_p(R,\gamma)$ that will be chosen later.  Define the corresponding likelihood ratio of mixtures under $\Hyp_1$ and $\Hyp_0$,
			$$
			L=\frac{d\P_{\pi_{p,1}^T}}{d\P_{\pi_{p,0}^T}}(\mX) := \E_{\Delta\Theta\sim \mu} \Bigl(\frac{d\P_{\Delta\Theta}}{d\P_0}(\mX)\Bigr),
			$$
			where $\P_0$ stands for the measure of $T$ i.i.d. $\mathcal N_p(0,\Sigma_Z)$ 
			Gaussian vectors and $\P_\Delta$ stands for the measure of $T$ consecutive $\mathrm{VAR}_p$ observations with the change-point at $\tau$ and the transition matrices $\Theta_1$ and $\Theta_2$ defined above. 
			We have
			\begin{align*}
			\frac{d\P_{\Delta\Theta}}{d\P_0}(\mX)&= \exp\left\{-\frac12 \left( \Bigl(1-\frac\tau T\Bigr)^2\sum_{t=0}^{\tau-1} X_t^{\T}\Delta\Theta^{\T} \Sigma_Z^{-1}\Delta\Theta X_t +  \Bigl(\frac\tau T\Bigr)^2\sum_{t=\tau}^{T-1} X_t^{\T}\Delta\Theta^{\T} \Sigma_Z^{-1}\Delta\Theta X_t\right.\right.\\
			&+ \Bigl(1-\frac\tau T\Bigr)\sum_{t=0}^{\tau-1} (X_t^{\T}\Delta\Theta^{\T} \Sigma_Z^{-1} X_{t+1} +X_{t+1}^{\T} \Sigma_Z^{-1}\Delta \Theta X_t)\\
			&\left.\left.- \Bigl(\frac\tau T\Bigr)\sum_{t=\tau}^{T-1} (X_t^{\T}\Delta\Theta^{\T} \Sigma_Z^{-1} X_{t+1} +X_{t+1}^{\T} \Sigma_Z^{-1}\Delta \Theta X_t) \right)
			-\frac12 X_0^{\T}(\Sigma_1^{-1}-\Sigma_Z^{-1})X_0\right\}.
			\end{align*}
			Here $\Sigma_1$ is the covariance matrix of the $\mathrm{VAR}_p(\Theta_1)$ process with $\Theta_1=-(1-\frac \tau T)\Delta\Theta$. Recall that $\Theta_1$ and $\Sigma_1$ satisfy the Lyapunov equation $\Sigma_1=\Theta_1\Sigma_1\Theta_1^{\T}+\Sigma_Z$ and, consequently, $\Sigma_1=(1-\frac \tau T)^2\Delta\Theta\Sigma_1\Delta\Theta^{\T}+\Sigma_Z$.  
			
			Let $\Delta\tilde\Theta$ be an independent copy of $\Delta\Theta$ following the law $\mu$. Then the second moment of the likelihood ratio can be written as
			\begin{align*}
			\E_0 [L^2]&= \E_0 \Bigl[ \E_{\Delta\Theta\sim \mu} \Bigl(\frac{d\P_{\Delta\Theta}}{d\P_0}(\mX)\Bigr)^2 \Bigr] \\
			&=  \E_0 \Bigl[ \E_{\mu^2} \Bigl(\frac{d\P_{\Delta\Theta}}{d\P_0}(\mX)\frac{d\P_{\Delta\tilde \Theta}}{d\P_0}(\mX)\Bigr) \Bigr] = \E_{\mu^2}\E_0 \Bigl(\frac{d\P_{\Delta\Theta}}{d\P_0}(\mX)\frac{d\P_{\Delta\tilde \Theta}}{d\P_0}(\mX)\Bigr)
			\end{align*}
			Thus, to bound from above $\E_0[ L^2]$, we will have to bound $\E_0 \Bigl(\frac{d\P_{\Delta\Theta}}{d\P_0}\frac{d\P_{\Delta\tilde \Theta}}{d\P_0}\Bigr)$. We have
			\begin{align*}
			\frac{dP_{\Delta\Theta}}{dP_0}\frac{dP_{\Delta\tilde\Theta}}{dP_0}(\mX)&=
			\exp\left\{-\frac12 \left( \Bigl(1-\frac\tau T\Bigr)^2\sum_{t=0}^{\tau-1} X_t^{\T}(\Delta\Theta^{\T} \Sigma_Z^{-1}\Delta\Theta +\Delta\tilde\Theta^{\T} \Sigma_Z^{-1}\Delta\tilde\Theta)  X_t \right.\right.\\
			&+  \Bigl(\frac\tau T\Bigr)^2\sum_{t=\tau}^{T-1} X_t^{\T}(\Delta\Theta^{\T} \Sigma_Z^{-1}\Delta\Theta +\Delta\tilde\Theta^{\T} \Sigma_Z^{-1}\Delta\tilde\Theta) X_t\\
			&+ \Bigl(1-\frac\tau T\Bigr)\sum_{t=0}^{\tau-1} (X_t^{\T}(\Delta\Theta+\Delta\tilde\Theta)^{\T} \Sigma_Z^{-1} X_{t+1} +X_{t+1}^{\T} \Sigma_Z^{-1}(\Delta\Theta+\Delta\tilde\Theta) X_t)\\
			&\left.\left.- \Bigl(\frac\tau T\Bigr)\sum_{t=\tau}^{T-1} (X_t^{\T}(\Delta\Theta+\Delta\tilde\Theta)^{\T} \Sigma_Z^{-1} X_{t+1} +X_{t+1}^{\T} \Sigma_Z^{-1}(\Delta\Theta+\Delta\tilde\Theta) X_t) \right)\right.\\
			& \left.-\frac12 X_0^{\T}(\Sigma_1^{-1}+\tilde\Sigma_1^{-1}-2\Sigma_Z^{-1})X_0\right\}.
			\end{align*}
			Since under $\Hyp_0$ the observations are i.i.d. $X_t\sim \cN_p(0,\Sigma_Z)$, we can set $X_t=\Sigma_Z^{1/2}U_t$, where $U_t\sim\cN_p(0,I_p)$, $t=0,\dots,T$ are i.i.d. Denote for simplicity  $\Omega=\Sigma_Z^{-1/2}\Delta\Theta\Sigma_Z^{1/2}$, $\tilde \Omega=\Sigma_Z^{-1/2}\Delta\tilde\Theta\Sigma_Z^{1/2}$. 
			Then 
			\begin{align*}
			\E_{0}\Bigl(\frac{dP_{\Delta\Theta}}{dP_0}\frac{dP_{\Delta\tilde\Theta}}{dP_0}(\mX)\Bigr)= \E \exp&\left\{-\frac12 \left( \Bigl(1-\frac\tau T\Bigr)^2\sum_{t=0}^{\tau-1} U_t^{\T}(\Omega^{\T}\Omega+\tilde\Omega^{\T}\tilde\Omega)U_t\right.\right.\\
			&+  \Bigl(\frac\tau T\Bigr)^2\sum_{t=\tau}^{T-1} U_t^{\T}(\Omega^{\T}\Omega+\tilde\Omega^{\T}\tilde\Omega)U_t\\
			&+ \Bigl(1-\frac\tau T\Bigr)\sum_{t=0}^{\tau-1} (U_t^{\T}(\Omega+\tilde\Omega)^{\T} U_{t+1} +U_{t+1}^{\T}(\Omega+\tilde\Omega)U_t)\\
			&\left.\left.- \Bigl(\frac\tau T\Bigr)\sum_{t=\tau}^{T-1} (U_t^{\T}(\Omega+\tilde\Omega)^{\T} U_{t+1} +U_{t+1}^{\T}(\Omega+\tilde\Omega)U_t) \right)\right.\\
			& \left.-\frac12 U_0^{\T}(\Sigma_Z^{1/2}(\Sigma_1^{-1}+\tilde\Sigma_1^{-1})\Sigma_Z^{1/2}-2I_p)U_0\right\}.
			\end{align*}
			Denote by $\mathbf{u}=\mathrm{vec}(U_0,\dots,U_T)\in \bR^{p(T+1)}$ the vector obtained by concatenation of $T+1$ vectors $U_t$ of dimension $p$. Then
			$$
			\E_{0}\Bigl(\frac{dP_{\Delta\Theta}}{dP_0}\frac{dP_{\Delta\tilde\Theta}}{dP_0}(\mX)\Bigr)=  \E \exp \Bigl\{-\frac 12 \mathbf u^{\T} M\mathbf u\Bigr\} =\Bigl(\det(I_{T+1}\otimes I_p+M)\Bigr)^{-1/2},
			$$
			where $\mathbf{u}\sim \cN_{p(T+1)}(0,I_{T+1}\otimes I_p)$ and $M=M(\Omega,\tilde\Omega)$ is the following $p(T+1)\times p(T+1)$ block-tridiagonal symmetric matrix  of blocks of size $p$:
			$$
			M=\begin{pmatrix}
			D_0 & L^{\T} &  O_{p\times p} &	\dots &\dots  &\dots & O_{p\times p}  &	 O_{p\times p}\\
			L & D_1 & L^{\T}&  O_{p\times p} &	\ddots &\ddots &\ddots  &	 O_{p\times p}  \\
			O_{p\times p} &\ddots & \ddots & \ddots& \ddots &\ddots	&\ddots& \vdots \\
			O_{p\times p}  & \ddots  & L & D_{\tau-1}& L^{\T}& O_{p\times p} &\ddots&	 \vdots \\
			\vdots & \ddots &\ddots  & K & D_{\tau}& K^{\T}&\ddots & O_{p\times p} & \\
			\vdots &\ddots & \ddots & \ddots& \ddots &\ddots&	\ddots & O_{p\times p}\\
			O_{p\times p} & \ddots &\ddots  & \ddots  & O_{p\times p}& K& D_{T-1} & K^{\T}\\
			O_{p\times p} & O_{p\times p} &\dots & \dots & \dots&O_{p\times p}& K & D_T
			\end{pmatrix} 
			$$
			with  $L=\Bigl(1-\dfrac\tau T\Bigr)( \Omega+\tilde\Omega)$, $K=-\Bigl(\dfrac\tau T\Bigr)( \Omega+\tilde\Omega)$, $D_0=\Bigl(1-\dfrac\tau T\Bigr)^2 W +\Sigma_Z^{1/2} (\Sigma_1^{-1} +\tilde \Sigma_1^{-1})\Sigma_Z^{1/2} -2I_p$, $D_T=O_{p\times p}$ and 
			$$
			D_t= \Bigl(1-\dfrac\tau T\Bigr)^2 W {\one}_{\{1\le t<\tau\}}+
			\Bigl(\dfrac\tau T\Bigr)^2W {\one}_{\{\tau\le t<T\}},
			$$
			where $W=\Omega^{\T} \Omega+\tilde\Omega^{\T} \tilde\Omega$.
			\item[\it Step 3. Bounds on the determinants.]\ \\  To calculate $\det(I_{T+1}\otimes I_p+M)$, we will use the determinant formula for block matrices: suppose that
			$
			F=\begin{pmatrix}
			A & B\\
			C & D
			\end{pmatrix}
			$
			and $D$ is invertible,  then $\det (F)=\det(D)\det(A-B D^{-1}C)$. Applying $T$ times the block matrix determinant formula starting from the lower-right block matrix $I_p+D_T$, we obtain 
			$$
			\det(I_{T+1}\otimes I_p+M)=\prod_{k=0}^{T} \det(I_p+A_k),
			$$
			where $A_T=O_{p\times p}$ and $A_k$ are defined recursively as follows,
			\begin{equation}\label{eqs:A}
			\begin{aligned}
			&A_{T-k}=\Bigl(\dfrac\tau T\Bigr)^2 W -K^{\T} (I_p+A_{T-k+1})^{-1}K ,\quad k=1,\dots,T-\tau-1\\
			&A_{\tau}=\Bigl(\dfrac\tau T\Bigr)^2 W-L^{\T}(I_p+ A_{\tau+1})^{-1} K\\
			&A_{\tau-k}=\Bigl(1-\dfrac\tau T\Bigr)^2 W -L^{\T}(I_p +A_{\tau-k+1})^{-1}: L\quad k=1,\dots,\tau-1\\
			&A_0=\Sigma_Z^{1/2} (\Sigma_1^{-1} +\tilde \Sigma_1^{-1})\Sigma_Z^{1/2} -2I_p+\Bigl(1-\dfrac\tau T\Bigr)^2 W -L^{\T}(I_p +A_{1})^{-1}L.
			\end{aligned}
			\end{equation}

			
			To bound the likelihood ratio from above, we have to calculate the lower bounds on the determinants $\det(I_p+A_{k})$ that are provided by Lemma~\ref{lem:dets}. Let $\Omega$ and $\tilde \Omega$ be chosen in a way that $\|\Omega\|_2=\|\tilde\Omega\|_2=\delta$ for some $\delta>0$ (see Step 4). 
			Using the identity
			$$
			(T-\tau)\Bigl(\frac\tau T\Bigr)^2+\tau\Bigl(1-\frac\tau T\Bigr)^2= Tq^2\Bigl(\frac\tau T\Bigr).
			$$
			we obtain from Lemma~\ref{lem:dets} that  for sufficiently small $\delta\in(0,1)$ satisfying the conditions of  the lemma, 
			\begin{align*}
			\prod_{k=0}^T\det(I+A_k)^{-1/2}&\le  \exp\left \{Tq^2\Bigl(\frac\tau T\Bigr)\tr(\Omega\tilde \Omega^T) + 3\kappa^2(\Sigma_Z)\Bigl(1-\frac\tau T\Bigr)^2\delta^2\right. \\
			&\left.+16 Tq^2\Bigl(\frac\tau T\Bigr) \delta^4+12  \Bigl(1-\frac\tau T\Bigr)^2(1+\kappa^2(\Sigma_Z))^2\delta^4\right\}.
			\end{align*}
			Consequently,
			\begin{align}\label{bound_product_det}
			\E_0[L^2]\le \exp&\Bigl(C_1\Bigl(1-\frac\tau T\Bigr)^2\delta^2+C_2\Bigl(1-\frac\tau T\Bigr)^4\delta^4+16Tq^2\Bigl(\frac\tau T\Bigr)\delta^4\Bigr) \\
			&\times \E_{(\Omega,\tilde \Omega) }\exp\left\{Tq^2\Bigl(\frac\tau T\Bigr)\tr(\Omega\tilde \Omega^{\T}) \right\},\nonumber
			\end{align}
			where the constants $C_1$ and $C_2$ depend only $\Sigma_Z$. 
			\item[\it Step 4. Prior on the jump matrix.]\ \\  
			We define a low-rank prior $\mu$ on the matrix $\Omega$ as in \citep{Carpentier2015}. Assume that $p$ is a multiple of the rank $R$. Let $v_k$, $k=1,\dots,R$ be  random vectors of size $p$ consisting of $p$ independent Rademacher random entries taking values $\pm 1$ with probability 1/2. Let the matrix $H$ be composed of $R$ blocks $H_k$, $k=1,\dots,R$ of size $p\times (p/R)$ defined as $H_k=v_k\otimes \xi_k^{\T}$, where $\xi_k=(\xi_{1,k},\dots,\xi_{p/R,k})^{\T}$ is a random vector of size $p/R$ with the Rademacher independent entries $\xi_{1,k}$:
			$$
			H=\begin{pmatrix} \underbrace{v_1\otimes \xi_1^{\T}}_{p\times p/R} &   \underbrace{v_2\otimes \xi_2^{\T}}_{p\times p/R} &\dots &  \underbrace{v_R\otimes \xi_R^{\T}}_{p\times p/R}\end{pmatrix}.
			$$
			Set $\Omega=\dfrac{\delta_{p,T}}{p} H$ so that $\|\Omega\|_{\mathrm{op}}\le \|\Omega\|_2=\delta_{p,T}=\delta$. 
			Note that 
			$$
			\|\Omega\|_2=\|\Sigma_Z^{-1/2}\Delta\Theta\Sigma_Z^{1/2}\|_2\le \kappa^{1/2}(\Sigma_Z)\|\Delta\Theta\|_2
			$$
			and 
			$$
			\|\Delta\Theta\|_2=\|\Sigma_Z^{1/2}\Omega\Sigma_Z^{-1/2}\|_2\le \kappa^{1/2}(\Sigma_Z)\|\Omega\|_2.
			$$
			The corresponding matrix $\Delta\Theta$ belongs to $\mathcal M_p(R,\gamma)$ if $\delta<\gamma\kappa^{-1/2}(\Sigma_Z)$ since in this case $\|\Delta\Theta\|_{\mathrm{op}}\le\|\Delta\Theta\|_2\le \kappa^{1/2}(\Sigma_Z)\delta< \gamma$.
			\item[\it Step 5. Bound on the second moment of likelihood ratio.]\ \\
			Using exactly the same reasoning as in the proof of Theorem~1 in \citep{Carpentier2015} (see p. 2686) we can show that 
			$$
			\E_{(\Omega,\tilde \Omega) }\exp\left \{Tq^2(\tau/T)\tr(\Omega\tilde \Omega^{\T}) \right\} \le 1+\frac{2}{\dfrac{p^2}{2T^2q^4(\tau/T)\delta^4}-1}.
			$$
			Thus \eqref{bound_product_det} implies that 
			$$
			\E_0[L^2]\le e^{\displaystyle{C_1\delta^2+C_2 \delta^4+16Tq^2\Bigl(\frac\tau T\Bigr)\delta^4}}
			\left[1+2 \Bigl(\dfrac{p^2}{2T^2q^4(\tau/T)\delta^4}-1\Bigr)^{-1}\right].
			$$
			In order to analyze the decay of each term in this upper  bound   we consider two asymptotic regimes, $p\lesssim\sqrt T$ and  $p\gtrsim\sqrt T$. 
			\begin{enumerate}
				\item In the case of  $p\lesssim\sqrt T$ and  $q^2(\tau/T)\delta^2=o(p/T)$ we have  $\dfrac{p^2}{T^2q^4(\tau/T)\delta^4}\to \infty$, as $p,T\to\infty$. Moreover,  as $\min(\tau/T,1-\tau/T)\ge h_*>0$, we have that 
				$$
				Tq^2\Bigl(\frac\tau T\Bigr)\delta^4=\frac{T}{q^2\Bigl(\dfrac\tau T\Bigr)} \left(q\Bigl(\frac\tau T\Bigr)\delta\right)^4 =\frac{o(p^2/T)}{q^2\Bigl(\dfrac\tau T\Bigr)}=o(1) \quad \mbox{as $p,T\to\infty$}
				$$
				Since $\min(\tau/T,1-\tau/T)\ge h_*>0$, and $p\lesssim T$,  $\delta^2=o(p/T)=o_T(T^{-1/2})$ as $p,T\to\infty$.  
				\item In the case of $p\gtrsim\sqrt T$ and $q^2(\tau/T)\delta^2=o(T^{-1/2})$ we have  $\dfrac{p^2}{T^2q^4(\tau/T)\delta^4}\to \infty$, and 
				$$
				Tq^2\Bigl(\frac\tau T\Bigr)\delta^4=\frac{T}{q^2\Bigl(\frac\tau T\Bigr)} \left(q\Bigl(\frac\tau T\Bigr)\delta\right)^4 =\frac{o(T^{-1})}{q^2\Bigl(\frac\tau T\Bigr)}=o_T(1) \quad \mbox{as $p,T\to\infty$}
				$$
				Similary, using $\min(\tau/T,1-\tau/T)\ge h_*>0$, we obtain $\delta^2=o_T(T^{-1/2})$.
			\end{enumerate}
			In both cases we have  $\E_0[L^2]\le  1+o(1)$ as $p,T\to\infty$ and 
			$$
			\kappa(\Sigma_Z)\mathcal E(\tau,\Theta_1,\Theta_2) =\kappa(\Sigma_Z)q^2\Bigl(\frac \tau T\Bigr)\|\Delta\Theta\|^2_2	\ge q^2\Bigl(\frac \tau T\Bigr) \delta^2=o\Bigl(\frac pT\wedge T^{-1/2}\Bigr).
			$$
		\end{description}	
		Thus the rate satisfies 
		$$
		\mathcal R_{p,T} = \kappa^{-1/2}(\Sigma_Z) o\Bigl(\sqrt{\frac pT }\wedge T^{-1/4}\Bigr)
		$$ 
		and the theorem follows.
	\end{proof}
	The following lemma provides the upper bound on the determinant of the matrix $I+M$.
	
	\begin{lemma}\label{lem:dets}
		Let $\Omega$, $\tilde\Omega\in \bR^{p\times p}$ be the matrices of rank $R$ with the same Frobenius norm, $\|\Omega\|_2=\|\tilde\Omega\|_2=\delta$, $0<\delta<1$ and with the operator norm bounded by $\gamma$. Let  $L=\Bigl(1-\dfrac\tau T\Bigr)( \Omega+\tilde\Omega)$, $K=-\Bigl(\dfrac\tau T\Bigr)( \Omega+\tilde\Omega)$, $W=\Omega^{\T}\Omega+\tilde\Omega^{\T}\tilde\Omega$ and the matrices $A_k$, $k=0,\dots,T-1$ be defined in~\eqref{eqs:A}.
		Then 
		\begin{enumerate}
			\item For any $0<\delta<1/4$ and any $1\le k\le T-1$,  we have  $\|A_k\|_{\mathrm{op}}\le 4\delta^2$,   and 
			\begin{equation}\label{bound:detAk}
			(\det(I_p+A_k))^{-1/2}\le  \begin{cases} 
			\exp\Bigl\{\Bigl(1-\dfrac\tau T\Bigr)^2 \tr(\Omega^{\T}\tilde \Omega) + 16
			\Bigl(1-\dfrac\tau T\Bigr)^2\delta^4\Bigr\},& k=1,\dots,\tau-1\\
			\\
			\exp\Bigl\{\Bigl(\dfrac\tau T\Bigr)^2 \tr(\Omega^{\T}\tilde \Omega) + 16\Bigl(\dfrac\tau T\Bigr)^2\delta^4\Bigr\},& k=\tau,\dots,T-1.	
			\end{cases}
			\end{equation}
			\item For any $0<\delta<1/4$ and 
			$\delta^2<\frac 12 \left(4+3\kappa^2(\Sigma_Z)\right)^{-1}$ we have 
			\begin{align}
			(\det(I_p+A_0))^{-1/2}&\le  \exp\left\{\Bigl(1-\dfrac\tau T\Bigr)^2 \tr(\Omega^{\T}\tilde \Omega) \right.\nonumber \\ &\left.+  3 \Bigl(1-\dfrac\tau T\Bigr)^2\kappa^2(\Sigma_Z)\delta^2 +12\Bigl(1-\dfrac\tau T\Bigr)^2(1+\kappa^2(\Sigma_Z))^2\delta^4  \right\}.\label{bound:detAT}
			\end{align}
		\end{enumerate}
	\end{lemma}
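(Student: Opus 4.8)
The plan is to prove both items of Lemma~\ref{lem:dets} by a single backward induction on $k$, starting from $A_T=O_{p\times p}$ and carrying along at each step an operator-norm bound, a Frobenius-norm bound and the claimed lower bound on $\det(I_p+A_k)$. The key preliminary observation is the elementary cancellation
\[
\Bigl(\tfrac\tau T\Bigr)^2W-K^{\T}K=-\Bigl(\tfrac\tau T\Bigr)^2\bigl(\Omega^{\T}\tilde\Omega+\tilde\Omega^{\T}\Omega\bigr),\qquad \Bigl(1-\tfrac\tau T\Bigr)^2W-L^{\T}L=-\Bigl(1-\tfrac\tau T\Bigr)^2\bigl(\Omega^{\T}\tilde\Omega+\tilde\Omega^{\T}\Omega\bigr),
\]
immediate from $W=\Omega^{\T}\Omega+\tilde\Omega^{\T}\tilde\Omega$, $K=-(\tau/T)(\Omega+\tilde\Omega)$, $L=(1-\tau/T)(\Omega+\tilde\Omega)$. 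Combined with $K^{\T}(I_p+A_{k+1})^{-1}K=K^{\T}K-K^{\T}(I_p+A_{k+1})^{-1}A_{k+1}K$ and the same identity for $L$, each recursion in~\eqref{eqs:A} becomes $A_k=-c_k(\Omega^{\T}\tilde\Omega+\tilde\Omega^{\T}\Omega)+R_k$, with $c_k=(\tau/T)^2$ for $\tau\le k\le T-1$, $c_k=(1-\tau/T)^2$ for $1\le k<\tau$, and $R_k$ of the form $K^{\T}(I_p+A_{k+1})^{-1}A_{k+1}K$ or $L^{\T}(I_p+A_{k+1})^{-1}A_{k+1}L$; the seam index $k=\tau$ is handled separately using $L^{\T}K=-(1-\tau/T)(\tau/T)(\Omega+\tilde\Omega)^{\T}(\Omega+\tilde\Omega)$, which yields $A_\tau=(\tau/T)W+(1-\tau/T)(\tau/T)(\Omega^{\T}\tilde\Omega+\tilde\Omega^{\T}\Omega)$ minus a remainder of the same type. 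This restructuring is the crux: the leading part is now explicit, $O(\delta^2)$ in operator norm and — since $\|\Omega^{\T}\tilde\Omega\|_2\le\|\Omega\|_{\op}\|\tilde\Omega\|_2=\delta^2$ — also $O(\delta^2)$ in Frobenius norm, and in the pure regimes it carries the factor $c_k$, i.e.\ the factor $q^2(\tau/T)$, while $R_k$ picks up an extra factor $\|A_{k+1}\|_{\op}$ and is therefore $O(\delta^4)$.

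First I would run the norm induction. From $\|\Omega\|_{\op}\le\|\Omega\|_2=\delta$ (and likewise for $\tilde\Omega$) one has $\|K\|_{\op}\le2(\tau/T)\delta$, $\|L\|_{\op}\le2(1-\tau/T)\delta$, $\|\Omega^{\T}\tilde\Omega+\tilde\Omega^{\T}\Omega\|_{\op}\le2\delta^2$, $\|W\|_{\op}\le2\delta^2$. Assuming inductively $\|A_{k+1}\|_{\op}\le4\delta^2<\tfrac14$, so $\|(I_p+A_{k+1})^{-1}\|_{\op}\le\tfrac43$, the restructured recursion gives for $k\ne\tau$ that $\|A_k\|_{\op}\le2c_k\delta^2+\tfrac{16}{3}c_k\delta^2\|A_{k+1}\|_{\op}\le4\delta^2$ whenever $\delta<\tfrac14$, and $\|A_\tau\|_{\op}\le2(\tau/T)\delta^2+\tfrac12\delta^2+O(\delta^4)\le4\delta^2$ for the seam, closing the induction from the base case $A_T=0$. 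The Frobenius bounds $\|A_k\|_2=O(c_k\delta^2)$ follow in the same way. In particular $\|A_k\|_{\op}=O(c_k\delta^2)$ in the pure regimes, a fact I will use below.

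Next, from $\|A_k\|_{\op}<1$ and the expansion $\log\det(I_p+A)\ge\tr(A)-\tfrac{1}{2(1-\|A\|_{\op})}\|A\|_2^2$, valid for symmetric $A$ with $\|A\|_{\op}<1$, we get $(\det(I_p+A_k))^{-1/2}\le\exp\{-\tfrac12\tr(A_k)+\tfrac{1}{4(1-4\delta^2)}\|A_k\|_2^2\}$. For $1\le k\le T-1$, $k\ne\tau$, the restructured form gives $\tr(A_k)=-2c_k\tr(\Omega^{\T}\tilde\Omega)+\tr(R_k)$, and since $\tr(R_k)=\tr\!\bigl((I_p+A_{k+1})^{-1}A_{k+1}KK^{\T}\bigr)$ with $KK^{\T}\succeq0$, we have $|\tr(R_k)|\le\|(I_p+A_{k+1})^{-1}\|_{\op}\|A_{k+1}\|_{\op}\|K\|_2^2\le\tfrac{64}{3}c_k\delta^4$; together with $\|A_k\|_2^2=O(c_k\delta^4)$ this yields $(\det(I_p+A_k))^{-1/2}\le\exp\{c_k\tr(\Omega^{\T}\tilde\Omega)+16c_k\delta^4\}$, i.e.~\eqref{bound:detAk}. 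For the seam, $\tr(A_\tau)=2(\tau/T)\delta^2+2(1-\tau/T)(\tau/T)\tr(\Omega^{\T}\tilde\Omega)+O(\delta^4)$; writing $-(1-\tau/T)(\tau/T)=(\tau/T)^2-(\tau/T)$ and using Cauchy--Schwarz $\delta^2+\tr(\Omega^{\T}\tilde\Omega)\ge0$ gives $-\tfrac12\tr(A_\tau)\le(\tau/T)^2\tr(\Omega^{\T}\tilde\Omega)+O(\delta^4)$, where all $O(\delta^4)$ contributions are in fact $O((\tau/T)^2\delta^4)$ because $\|A_{\tau+1}\|_{\op}=O((\tau/T)^2\delta^2)$; this again matches~\eqref{bound:detAk}.

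Finally, for $k=0$ the only new term is $\Sigma_Z^{1/2}(\Sigma_1^{-1}+\tilde\Sigma_1^{-1})\Sigma_Z^{1/2}-2I_p$. Using the Lyapunov equation $\Sigma_1=\Sigma_Z+(1-\tau/T)^2\Delta\Theta\,\Sigma_1\,\Delta\Theta^{\T}$, I would write $\Sigma_Z^{1/2}\Sigma_1^{-1}\Sigma_Z^{1/2}-I_p=-(I_p+Q)^{-1}Q$ with $Q=(1-\tau/T)^2\Sigma_Z^{-1/2}\Delta\Theta\,\Sigma_1\,\Delta\Theta^{\T}\Sigma_Z^{-1/2}\succeq0$; the bounds $\|\Delta\Theta\|_{\op}\le\kappa^{1/2}(\Sigma_Z)\delta$, $\|\Sigma_Z^{-1/2}\|_{\op}^2=\sigma_{\min}(\Sigma_Z)^{-1}$ and $\|\Sigma_1\|_{\op}\le2\|\Sigma_Z\|_{\op}$ (the last using the hypothesis on $\delta$) give $\|Q\|_{\op}\le2(1-\tau/T)^2\kappa^2(\Sigma_Z)\delta^2$ --- this is where $\kappa(\Sigma_Z)$ enters, and the condition $\delta^2<\tfrac12(4+3\kappa^2(\Sigma_Z))^{-1}$ is exactly what keeps $\|A_0\|_{\op}<1$ after absorbing the $W$ and $L^{\T}L$ pieces. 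Since $Q\succeq0$, the term $-(I_p+Q)^{-1}Q$ raises $-\tfrac12\tr(A_0)$ by at most $\tfrac12\bigl(\tr(Q)+\tr(\tilde Q)\bigr)\le(1-\tau/T)^2\delta^2+O(\kappa^2(\Sigma_Z)\delta^4)$, the $W$ and $L^{\T}(I_p+A_1)^{-1}L$ parts contribute the exact main term $(1-\tau/T)^2\tr(\Omega^{\T}\tilde\Omega)$ as above (the $\delta^2$ contributions cancel), and $\|A_0\|_2^2=O\bigl((1-\tau/T)^2(1+\kappa^2(\Sigma_Z))^2\delta^4\bigr)$; since $\kappa(\Sigma_Z)\ge1$, collecting everything gives~\eqref{bound:detAT}. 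The hard part throughout is this operator-norm induction together with tracking the constants: a naive triangle-inequality bound on~\eqref{eqs:A} does not keep $\|A_k\|_{\op}$ below $4\delta^2$, and it is precisely the cancellation above that simultaneously exhibits the leading $O(\delta^2)$ part in a trace-friendly form, shows it scales like $q^2(\tau/T)$, and downgrades the remainder to genuine $O(\delta^4)$; once this is in place, the three regimes $k>\tau$, $k=\tau$, $k<\tau$ and the endpoint $k=0$ are variants of the same two-line computation, and the remaining work is routine bookkeeping to confirm the displayed numerical constants.
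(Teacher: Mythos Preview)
Your proposal is correct and follows essentially the same route as the paper. Both arguments hinge on the same cancellation $c_kW-K^{\T}K=-c_k(\Omega^{\T}\tilde\Omega+\tilde\Omega^{\T}\Omega)$ (and its $L$-analogue), then run a backward induction controlling $\|A_k\|_{\op}$ and $\tr(A_k)$, and finally invoke the Rump determinant bound (Lemma~\ref{lem:detbound}). The only cosmetic difference is that you isolate the remainder via the resolvent identity $(I+A)^{-1}=I-(I+A)^{-1}A$, whereas the paper writes it as the tail of the Neumann series $\sum_{j\ge1}(-1)^jA_{k+1}^j$; these are literally the same object and yield the same $\tfrac{64}{3}c_k\delta^4$ bound on $|\tr(R_k)|$. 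Your seam analysis at $k=\tau$ is more explicit than the paper's one-line ``following the same approach'', and your handling of $A_0$ via $\Sigma_Z^{1/2}\Sigma_1^{-1}\Sigma_Z^{1/2}=(I_p+Q)^{-1}$ is a clean reformulation of the paper's Lyapunov-equation manipulation, giving the same (in fact slightly sharper) $\delta^2$-term. One small remark: you invoke the log-det lower bound ``for symmetric $A$'', but at the seam $A_\tau$ need not be symmetric; the bound you need is the spectral-radius version (part~3 of Lemma~\ref{lem:detbound}), which follows from your $\|A_\tau\|_{\op}<1$.
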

	
	\begin{proof}
		To bound from below the determinants $\det(I_p+A_{k})$, we  use the result of Lemma~\ref{lem:detbound} from~\citep{Rump2018}:
		$$
		\det(I_p+A_k)\ge  \exp\Bigl\{\tr(A_k)-\frac{\frac12\|A_k\|_2^2}{1-\rho(A_k)}\Bigr\} \ge \exp\Bigl\{\tr(A_k)-\frac{\frac12\|A_k\|_2^2}{1-\|A_k\|_{\mathrm{op}}}\Bigr\},
		$$ 
		under the condition that $\|A_k\|_{\mathrm{op}}<1$. 
		Let us start with $A_{T-1}$. We have 
		$$
		A_{T-1}=\Bigl(\dfrac\tau T\Bigr)^2 (\Omega^{\T}\Omega+\tilde\Omega^{\T}\tilde\Omega) -\Bigl(\dfrac\tau T\Bigr)^2 (\Omega+\tilde\Omega)^{\T} (\Omega+\tilde\Omega)=-\Bigl(\dfrac\tau T\Bigr)^2(\Omega^{\T}\tilde \Omega+\tilde\Omega^{\T}\Omega)
		$$
		We have $\tr(\Omega^{\T}\tilde \Omega+\tilde\Omega^{\T}\Omega)=2 \tr(\Omega^{\T}\tilde \Omega)$ and 
		\begin{equation}\label{eq:normOmegaOmegatilda}
		\|\Omega^{\T}\tilde \Omega+\tilde\Omega^{\T}\Omega\|_{\mathrm{op}}\le \|\Omega^{\T}\tilde \Omega+\tilde\Omega^{\T}\Omega\|_2\le  2\|\Omega\|_2\|\tilde \Omega\|_2=2\delta^2.
		\end{equation}
		Consequently, for any $0<\delta<1/\sqrt 2$, 
		\begin{align*}
		\det(I_p+A_{T-1})&=\det\Bigl(I_p-\Bigl(\dfrac\tau T\Bigr)^2(\Omega^{\T}\tilde \Omega+\tilde\Omega^{\T}\Omega) \Bigr)\\
		&\ge  \exp\Bigl\{-\Bigl(\dfrac\tau T\Bigr)^2\tr(\Omega^{\T}\tilde \Omega+\tilde\Omega^{\T} \Omega)-\frac{\frac12\Bigl(\dfrac\tau T\Bigr)^4\|(\Omega^{\T}\tilde \Omega+\tilde\Omega^{\T} \Omega\|_2^2}{1-\Bigl(\dfrac\tau T\Bigr)^2\|\Omega^{\T}\tilde \Omega+\tilde\Omega^{\T} \Omega\|_{\mathrm{op}}}\Bigr\}\\
		&\ge \exp\Bigl\{ -2 \Bigl(\dfrac\tau T\Bigr)^2\tr(\Omega^{\T}\tilde \Omega)-\frac{2\Bigl(\dfrac\tau T\Bigr)^4\delta^4}{1-2\delta^2}\Bigr\}, 
		\end{align*}
		where we used the fact that $\tau/T\le 1$. 
		
		Let us turn to $\det(I_p+A_{T-k})$, where $k=1,\dots,T-\tau-1$.  We will show by induction that $\|A_{T-k}\|_{\mathrm{op}}\le 4\delta^2$ if $0<\delta<1/4$ . Note that $\|A_{T-1}\|_{\mathrm{op}}\le 2\delta^2$. We have 
		\begin{align*}
		A_{T-k} &= \Bigl(\dfrac\tau T\Bigr)^2 (\Omega^{\T}\Omega+\tilde\Omega^{\T}\tilde\Omega) -\Bigl(\dfrac\tau T\Bigr)^2 (\Omega+\tilde\Omega)^{\T}(I_p+A_{T-k+1})^{-1} (\Omega+\tilde\Omega)\\
		&=-\Bigl(\dfrac\tau T\Bigr)^2(\Omega^{\T}\tilde \Omega+\tilde\Omega^{\T}\Omega) - \Bigl(\dfrac\tau T\Bigr)^2 (\Omega+\tilde\Omega)^{\T}\left(\sum_{j=1}^\infty (-1)^j A_{T-k+1}^j \right)(\Omega+\tilde\Omega).
		\end{align*}
		Suppose that  $\|A_{T-k+1}\|_{\mathrm{op}}<4\delta^2<1/4$, where $0<\delta<1/4$. Then using~\eqref{eq:normOmegaOmegatilda}, we get
		\begin{align*}
		\|A_{T-k}\|_{2}&\le 2\Bigl(\dfrac\tau T\Bigr)^2\delta^2
		+\Bigl(\dfrac\tau T\Bigr)^2 \|\Omega+\tilde\Omega\|_{2}^2 \sum_{j=1}^\infty \|A_{T-k+1}\|_{\mathrm{op}}^j\\
		&\le \Bigl(\dfrac\tau T\Bigr)^2\left(2\delta^2+ 4\delta^2\frac{\|A_{T-k+1}\|_{\mathrm{op}}}{1-\|A_{T-k+1}\|_{\mathrm{op}}}\right)\le 2\delta^2\Bigl(\dfrac\tau T\Bigr)^2\frac{1+\|A_{T-k+1}\|_{\mathrm{op}}}{1-\|A_{T-k+1}\|_{\mathrm{op}}}\le 4\Bigl(\dfrac\tau T\Bigr)^2\delta^2.
		\end{align*}
		Consequently, $\|A_{T-k}\|_{\mathrm{op}}\le 	\|A_{T-k}\|_{2} \le 4\Bigl(\dfrac\tau T\Bigr)^2\delta^2$.
		
		To bound the trace of $A_{T-k}$ we use the fact that for any two square matrices $A$ and $B$, $\tr(AB)\le \|A\|_2\|B\|_2$. Then, for $0<\delta<1/4$, since $\|A_{T-k+1}\|_{\mathrm{op}}\le 4\delta^2$,
		\begin{align*}
		\tr(A_{T-k})&=- 2\Bigl(\dfrac\tau T\Bigr)^2 \tr(\Omega^{\T}\tilde\Omega)-\tr\left(\Bigl(\dfrac\tau T\Bigr)^2(\Omega+\tilde\Omega)^{\T} \Bigl(\sum_{j=1}^\infty (-1)^jA_{T-k+1}^j\Bigr) (\Omega+\tilde\Omega)\right)\\
		&\ge- 2\Bigl(\dfrac\tau T\Bigr)^2 \tr(\Omega^{\T}\tilde\Omega)-\Bigl(\dfrac\tau T\Bigr)^2\|\Omega+\tilde\Omega\|_2^2 \Big\| \sum_{j=1}^\infty (-1)^jA_{T-k+1}^j\Big\|_{\mathrm{op}} \\
		&\ge - 2\Bigl(\dfrac\tau T\Bigr)^2 \tr(\Omega^{\T}\tilde\Omega) - 
		\Bigl(\dfrac\tau T\Bigr)^2\|\Omega+\tilde\Omega\|_2^2 \frac{\|A_{T-k+1}\|_{\mathrm{op}}}{1-\|A_{T-k+1}\|_{\mathrm{op}}} \\
		&\ge - 2\Bigl(\dfrac\tau T\Bigr)^2 \tr(\Omega^{\T}\tilde\Omega)- 4\Bigl(\dfrac\tau T\Bigr)^2\delta^2 \dfrac {4\delta^2}{1-4\delta^2}\\
		&\ge  -2\Bigl(\dfrac\tau T\Bigr)^2 \tr(\Omega^{\T}\tilde\Omega) -\frac{64}3\Bigl(\dfrac\tau T\Bigr)^2\delta^4.
		\end{align*}
		Thus, using Lemma~\ref{lem:detbound} we obtain for $0<\delta<1/4$ that 
		\begin{align*}
		\det(I+A_{T-k})&\ge  \exp\Biggl\{\tr(A_{T-k})-\frac{\frac12\|A_{T-k}\|_2^2}{1-\|A_{T-k}\|_{\mathrm{op}}}\Biggr\}\\
		&\ge \exp\left\{ -2\Bigl(\dfrac\tau T\Bigr)^2 \tr(\Omega^{\T}\tilde\Omega)-\frac{64}3\Bigl(\dfrac\tau T\Bigr)^2\delta^4 -\frac{8\Bigl(\dfrac\tau T\Bigr)^2\delta^4}{1-4\delta^2} \right\}\\
		&\ge \exp\left\{  -2\Bigl(\dfrac\tau T\Bigr)^2\tr(\Omega^{\T}\tilde\Omega)- 32\Bigl(\dfrac\tau T\Bigr)^2\delta^4 \right\}.
		\end{align*}
		Following the same approach, we obtain the same bound for $\det(I_p+A_{\tau})$. For $\det(I_p+A_{\tau-k})$, $k=1,\dots,\tau-1$ we have
		$$
		\det(I+A_{T-k})\ge  \exp\left\{  -2\Bigl(1-\dfrac\tau T\Bigr)^2\tr(\Omega^{\T}\tilde\Omega)- 32\Bigl(1-\dfrac\tau T\Bigr)^2\delta^4\right\}.
		$$
		Let us bound $\det(I+A_0)$. We have 
		\begin{align*}
		\|A_0\|_2&\le \|\Bigl(1-\frac{\tau}T\Bigr)^2(\Omega^{\T}\Omega+\tilde\Omega^{\T}\tilde\Omega)-L^{\T}(I_p+A_{1})^{-1} L\|_2\\
		&+\|\Sigma_Z^{1/2} (\Sigma_1^{-1}-\Sigma_Z^{-1}) \Sigma_Z^{1/2}\|_2+\|\Sigma_Z^{1/2} (\tilde\Sigma_1^{-1}-\Sigma_Z^{-1}) \Sigma_Z^{1/2}\|_2\\
		&\le 4\Bigl(1-\frac{\tau}T\Bigr)^2\delta^2+\|\Sigma_Z^{1/2} (\Sigma_1^{-1}-\Sigma_Z^{-1}) \Sigma_Z^{1/2}\|_2+\|\Sigma_Z^{1/2} (\tilde\Sigma_1^{-1}-\Sigma_Z^{-1}) \Sigma_Z^{1/2}\|_2
		\end{align*}
		From the Lyapunov equation $\Sigma_1=\Bigl(1-\frac{\tau}T\Bigr)^2\Delta\Theta\Sigma_1\Delta\Theta^{\T}+\Sigma_Z$ it follows that 
		$$
		\Sigma_Z^{1/2} (\Sigma_1^{-1}-\Sigma_Z^{-1}) \Sigma_Z^{1/2}=\Sigma_Z^{-1/2}(\Sigma_Z-\Sigma_1)\Sigma_1^{-1}\Sigma_Z^{1/2}	=-\Bigl(1-\frac{\tau}T\Bigr)^2\Sigma_Z^{1/2}\Delta\Theta\Sigma_1\Delta\Theta^{\T}\Sigma_1^{-1}\Sigma_Z^{1/2}	
		$$
		and
		\begin{align*}
		\|\Sigma_Z^{1/2} (\Sigma_1^{-1}-\Sigma_Z^{-1}) \Sigma_Z^{1/2}\|_2 &=\Bigl(1-\frac{\tau}T\Bigr)^2\|\Sigma_Z^{-1/2}\Delta\Theta\Sigma_1\Delta\Theta^{\T}\Sigma_1^{-1}\Sigma_Z^{1/2}	\|_2\\
		&=\Bigl(1-\frac{\tau}T\Bigr)^2\|\Omega \Sigma_Z^{-1/2}\Sigma_1 \Sigma_Z^{-1/2}\Omega^{\T}\Sigma_Z^{1/2}\Sigma_1^{-1}\Sigma_Z^{1/2}\|_2\\
		&\le\Bigl(1-\frac{\tau}T\Bigr)^2 \|\Omega\|_2^2 \kappa(\Sigma_1) \kappa(\Sigma_Z).
		\end{align*}
		Lemma~\ref{lem:stab_lyapunov} and $\|\Delta\Theta\|_{\mathrm{op}}\le \kappa^{1/2}(\Sigma_Z)\|\Omega\|_{\mathrm{op}}$ imply that for $\delta<\frac12\kappa^{-1/2}(\Sigma_Z)$, 
		$$
		\kappa(\Sigma_1)\le \kappa(\Sigma_Z) \frac{1+\Bigl(1-\frac{\tau}T\Bigr)^2\|\Delta\Theta\|_{\mathrm{op}}^2}{1-\Bigl(1-\frac{\tau}T\Bigr)^2\|\Delta\Theta\|_{\mathrm{op}}^2}\le   \kappa(\Sigma_Z)\frac{1+\Bigl(1-\frac{\tau}T\Bigr)^2\kappa(\Sigma_Z)\delta^2}{1-\Bigl(1-\frac{\tau}T\Bigr)^2\kappa(\Sigma_Z)\delta^2}\le 3\kappa(\Sigma_Z)
		$$
		Therefore, for any $\delta<\frac 12 \kappa^{-1/2}(\Sigma_Z)$,
		$$
		\|\Sigma_Z^{1/2} (\Sigma^{-1}-\Sigma_Z^{-1}) \Sigma_Z^{1/2}\|_2
		\le 3\Bigl(1-\frac{\tau}T\Bigr)^2 \kappa^2(\Sigma_Z)\delta^2.
		$$
		Thus 
		\begin{equation}\label{eq:normA0}
		\|A_0\|_{\mathrm{op}}\le\|A_0\|_2\le \Bigl(1-\frac{\tau}T\Bigr)^2(4+ 3\kappa^2(\Sigma_Z))\delta^2.
		\end{equation}
		Next,
		\begin{align*}
		\tr(A_0)&= \tr\Bigl(\Bigl(1-\frac{\tau}T\Bigr)^2(\Omega^{\T}\Omega+\tilde\Omega^{\T}\tilde\Omega)-L^{\T}(I_p+A_{1})^{-1} L\Bigr)\\
		&\quad\quad+\tr(\Sigma_Z^{1/2} (\Sigma_1^{-1}-\Sigma_Z^{-1}) \Sigma_Z^{1/2})+\tr(\Sigma_Z^{1/2} (\tilde\Sigma_1^{-1}-\Sigma_Z^{-1}) \Sigma_Z^{1/2})\\
		&= \tr\Bigl(\Bigl(1-\frac{\tau}T\Bigr)^2(\Omega^{\T}\Omega+\tilde\Omega^{\T}\tilde\Omega)-L^{\T}(I_p+A_{1})^{-1} L\Bigr)\\
		&\quad\quad-\Bigl(1-\frac{\tau}T\Bigr)^2\tr(\Sigma_Z^{-1/2}\Delta\Theta\Sigma_1\Delta\Theta^{\T}\Sigma_1^{-1} \Sigma_Z^{1/2})-\Bigl(1-\frac{\tau}T\Bigr)^2\tr(\Sigma_Z^{-1/2}\Delta\tilde\Theta\tilde\Sigma_1\Delta\tilde\Theta^{\T}\tilde\Sigma_1^{-1} \Sigma_Z^{1/2}).
		\end{align*}
		We have for $\delta<\frac 12 \kappa^{-1/2}(\Sigma_Z)$,
		\begin{align*}
		\tr(\Sigma_Z^{-1/2}\Delta\Theta\Sigma_1\Delta\Theta^{\T}\Sigma_1^{-1} \Sigma_Z^{1/2}) &=\tr(\Delta\Theta\Sigma_1\Delta\Theta^{\T}\Sigma_1^{-1} ) \le \|\Delta\Theta\Sigma_1\|_2\|\Delta\Theta^{\T}\Sigma_1^{-1}\|_2\\
		&\le \|\Delta\Theta\|_2^2 \|\Sigma_1\|_{\mathrm{op}}\|\Sigma_1^{-1}\|_{\mathrm{op}} = \|\Sigma_Z^{1/2}\Omega\Sigma_Z^{-1/2}\|_2^2\kappa(\Sigma_1)\\
		&\le \delta^2 \kappa(\Sigma_Z)\kappa(\Sigma_1) \le 3\delta^2 \kappa^2(\Sigma_Z).
		\end{align*}
		Thus, using the same reasoning as above to bound the first term of $\tr(A_0)$, we get 
		$$
		\tr(A_0)\ge  -2\Bigl(1-\dfrac\tau T\Bigr)^2 \tr(\Omega^{\T}\tilde\Omega) -\frac{64}3\Bigl(1-\dfrac\tau T\Bigr)^2\delta^4 -6\Bigl(1-\frac{\tau}T\Bigr)^2\delta^2  \kappa^2(\Sigma_Z).
		$$
		If $\delta^2<\frac 12 \Bigl(4+3\kappa^2(\Sigma_Z)\Bigr)^{-1}$, then using~\eqref{eq:normA0} we obtain the bound
		$$
		\frac{\frac12\|A_0\|_2^2}{1-\|A_0\|_{\mathrm{op}}} \le \Bigl(1-\dfrac\tau T\Bigr)^2\delta^4  \Bigl(4+3\kappa^2(\Sigma_Z)\Bigr)^2.
		$$
		Taking into account that fact that $\frac14 \kappa^{-1}(\Sigma_Z)>\frac 12 \Bigl(4+3\kappa^2(\Sigma_Z)\Bigr)^{-1}$ and that $64/3+ \Bigl(4+3\kappa^2(\Sigma_Z)<24(1+\kappa^2(\Sigma_Z))^2$, we obtain that for $\delta^2<\frac 12 \Bigl(4+3\kappa^2(\Sigma_Z)\Bigr)^{-1}$,
		\begin{align*}
		\det(I+A_0)&\ge  \exp\Bigl\{\tr(A_0)-\frac{\frac12\|A_0\|_2^2}{1-\|A_0\|_{\mathrm{op}}}\Bigr\}\\
		&\ge   \exp\left\{-2\Bigl(1-\dfrac\tau T\Bigr)^2 \tr(\Omega^{\T}\tilde\Omega) -\frac{64}3\Bigl(1-\dfrac\tau T\Bigr)^2\delta^4 \right.\\
		&\quad\quad\qquad\left.-6\Bigl(1-\frac{\tau}T\Bigr)^2\delta^2  \kappa^2(\Sigma_Z)- \Bigl(1-\dfrac\tau T\Bigr)^2\delta^4  \Bigl(4+3\kappa^2(\Sigma_Z)\Bigr)^2\right\}\\
		&\ge   \exp\left\{-2\Bigl(1-\dfrac\tau T\Bigr)^2 \tr(\Omega^{\T}\tilde\Omega)  -6\Bigl(1-\frac{\tau}T\Bigr)^2\delta^2  \kappa^2(\Sigma_Z) - 24 \Bigl(1-\dfrac\tau T\Bigr)^2\delta^4 (1+\kappa^2(\Sigma_Z))^2\right\}.
		\end{align*}
		and the lemma follows. 
	\end{proof}

	\section{Results on the estimation of the transition matrix}
	
	The following result is a reformulation of Corollary 4 in \citep{negahban2011}.
	\begin{prop}\label{cor:bound_risk_estimation2}
		Let $(X_0,\dots,X_n)$ be  $n$ consecutive realizations of a $\mathrm{VAR}_p(\Theta,\Sigma_Z)$ stationary  process~\eqref{eq:var_def} with the transition matrix $\Theta$ of rank at most $R$ and satisfying  $\|\Theta\|_{\op}=\gamma<1$. Assume that $n\ge p$. 
		
		Let $\widehat \Theta$ be a solution of the SDP~\eqref{eq:Theta_hat} with the regularization parameter $\lambda_n$ given by~\eqref{eq:lambda}.  Then there exist a universal  constants $c^*,c_2,c_3$ such that
		\begin{equation}\label{eq:upper_bound_risk_fro}
		\| \widehat{\Theta}-\Theta\|_{2}^{2}\leq c^*\dfrac{\kappa^{2}(\Sigma)}{(1-\gamma)^{2}}\dfrac{Rp}{n}.
		\end{equation}
		with probability greater than $1-c_2\exp(-c_3p)$.
	\end{prop}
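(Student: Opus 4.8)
The plan is to obtain \eqref{eq:upper_bound_risk_fro} from the general deterministic analysis of nuclear-norm penalized least squares in \citep{negahban2011}, keeping careful track of how the constants depend on $\Sigma$, $\gamma$, $R$, $p$ and $n$. Write $\mathfrak X=(X_0,\dots,X_{n-1})\in\bR^{p\times n}$ and $\mathfrak Y=(X_1,\dots,X_n)=\Theta\mathfrak X+\mathfrak Z$ with $\mathfrak Z=(Z_1,\dots,Z_n)$, so that $\widehat\Theta$ minimises $\tfrac1n\|\mathfrak Y-M\mathfrak X\|_2^2+\lambda_n\|M\|_*$ (since $n>p$, the penalty in \eqref{eq:lambda} is $\lambda_n\|M\|_*$). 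The deterministic part of the argument needs two ingredients: first, that $\lambda_n$ dominates a fixed multiple of the dual norm of the score at the truth, i.e.\ $\lambda_n\gtrsim\tfrac1n\|\mathfrak X\mathfrak Z^{\T}\|_{\op}$, the nuclear norm being dual to the operator norm; second, a restricted strong convexity (RSC) lower bound on the quadratic form $\Delta\mapsto\tfrac1n\|\Delta\mathfrak X\|_2^2$. Given both, the oracle inequality of \citep[Cor.~4]{negahban2011} yields $\|\widehat\Theta-\Theta\|_2^2\lesssim R\lambda_n^2/\kappa_{\mathrm{RSC}}^2$, where $\kappa_{\mathrm{RSC}}$ is the RSC curvature.

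Next I would supply the two stochastic ingredients from the auxiliary lemmas already available. For the score bound, Lemma~\ref{lem:bound_stoch_term2} gives $\tfrac1n\|\mathfrak X\mathfrak Z^{\T}\|_{\op}\le c_2^*\tfrac{\|\Sigma\|_{\op}}{1-\gamma}\sqrt{p/n}$ on an event of probability at least $1-k_1e^{-k_2 p}$, so the choice $\lambda_n=2c_2^*\tfrac{\|\Sigma\|_{\op}}{1-\gamma}\sqrt{p/n}$ from \eqref{eq:lambda} meets the requirement there. For strong convexity I would use $\tfrac1n\|\Delta\mathfrak X\|_2^2=\tr\bigl(\Delta^{\T}\Delta\cdot\tfrac1n\mathfrak X\mathfrak X^{\T}\bigr)\ge\sigma_{\min}\bigl(\tfrac1n\mathfrak X\mathfrak X^{\T}\bigr)\,\|\Delta\|_2^2$ together with Lemma~\ref{lem:low_bound_sampling_operator} (and Lemma~\ref{lem:upper_bound_sampling_operator} for the companion upper bound), which for $n\ge p$ give $\sigma_{\min}\bigl(\tfrac1n\mathfrak X\mathfrak X^{\T}\bigr)\ge\sigma_{\min}(\Sigma)/2$ with probability at least $1-c_2'e^{-c_3'p}$. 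Hence, on that event, the loss is globally strongly convex with curvature $\kappa_{\mathrm{RSC}}\ge\sigma_{\min}(\Sigma)/2$, so RSC over the relevant low-rank cone holds with no curvature deficit.

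Finally, on the intersection of these two events --- of probability at least $1-c_2e^{-c_3 p}$ after relabelling constants --- I would plug $\lambda_n\asymp\tfrac{\|\Sigma\|_{\op}}{1-\gamma}\sqrt{p/n}$ and $\kappa_{\mathrm{RSC}}\asymp\sigma_{\min}(\Sigma)$ into the oracle inequality to obtain
\[
\|\widehat\Theta-\Theta\|_2^2\ \lesssim\ \frac{R}{\sigma_{\min}^2(\Sigma)}\cdot\frac{\|\Sigma\|_{\op}^2}{(1-\gamma)^2}\cdot\frac pn\ =\ \frac{\kappa^2(\Sigma)}{(1-\gamma)^2}\cdot\frac{Rp}{n},
\]
which is \eqref{eq:upper_bound_risk_fro} with an absolute constant $c^*$. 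The only real obstacle relative to the i.i.d.\ design treated in \citep{negahban2011} is that the columns of $\mathfrak X$ form a dependent VAR sequence, so both the score bound and the smallest-eigenvalue bound require Hanson--Wright / Gaussian concentration estimates for dependent sequences --- and this is precisely where the extra factors $1/(1-\gamma)$ and $\kappa(\Sigma)$ enter --- but since these dependent-data deviation inequalities are isolated in Lemmas~\ref{lem:bound_stoch_term2}, \ref{lem:upper_bound_sampling_operator} and~\ref{lem:low_bound_sampling_operator}, what remains is bookkeeping of constants.
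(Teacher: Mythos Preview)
Your proposal is correct and matches the paper's approach: the paper simply records Proposition~\ref{cor:bound_risk_estimation2} as a reformulation of Corollary~4 in \citep{negahban2011} without further argument, and you have spelled out exactly the two stochastic ingredients (the score bound via Lemma~\ref{lem:bound_stoch_term2} and the RSC/curvature lower bound) that feed into that corollary to produce the stated constants. One small correction on lemma citations: the eigenvalue lower bound $\sigma_{\min}\bigl(\tfrac1n\mathfrak X\mathfrak X^{\T}\bigr)\ge\sigma_{\min}(\Sigma)/4$ you need is the second display~\eqref{eq:sigma_min} of Lemma~\ref{lem:upper_bound_sampling_operator}, not Lemma~\ref{lem:low_bound_sampling_operator}, which carries the extra hypothesis~\eqref{cond:threshold_frobenius_norm} on $\|A\|_2/\|A\|_{\op}$ that need not hold for an arbitrary error matrix $\Delta=\widehat\Theta-\Theta$.
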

	
	The next proposition shows the consistency of the transition matrix estimator in the operator norm. 
	\begin{prop}[Operator norm consistency]\label{thm:operator_norm}
		Let $(X_0,\dots,X_n)$ be consecutive realizations of a $\mathrm{VAR}_p(\Theta,\Sigma_Z)$ stationary  process~\eqref{eq:var_def} with the transition matrix $\Theta$ of rank at most $R$ satisfying  $\|\Theta\|_{\op}=\gamma<1$. Assume that $n\ge p$.  Let $\widehat \Theta$ be a solution of  SDP~\eqref{eq:Theta_hat} with the regularization parameter defined in~\eqref{eq:lambda}.  Then with probability at least $1-2/p$
		$$
		\|\widehat \Theta-\Theta\|_{\mathrm{op}} \le 8 c_1 \frac{\kappa(\Sigma)}{1-\gamma} \sqrt{\frac pn},
		$$	
		where $c_1$ is an absolute constant. 
	\end{prop}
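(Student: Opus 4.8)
The plan is to extract the operator-norm bound directly from the first-order optimality condition of the convex program~\eqref{eq:Theta_hat}, rather than going through a restricted-eigenvalue argument as in the Frobenius-norm analysis. The point is that when $n\ge p$ the empirical Gram matrix $\frac1n\mathfrak X_{<n}\mathfrak X_{<n}^{\T}$ is invertible on a high-probability event, and inverting it is precisely what removes the spurious factor $\sqrt R$ that appears in Proposition~\ref{cor:bound_risk_estimation2}.

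First I would write the Karush--Kuhn--Tucker condition for~\eqref{eq:sdp1}. Since the program is convex and $\widehat\Theta$ is a minimizer, there is a subgradient $\widehat V\in\partial\|\widehat\Theta\|_*$, so in particular $\|\widehat V\|_{\op}\le1$, with
\[
-\frac2n\bigl(\mathfrak Y_{\le n}-\widehat\Theta\,\mathfrak X_{<n}\bigr)\mathfrak X_{<n}^{\T}+\lambda\widehat V=0 .
\]
Using $X_{i+1}=\Theta X_i+Z_{i+1}$, i.e. $\mathfrak Y_{\le n}-\widehat\Theta\,\mathfrak X_{<n}=\mathfrak Z_{<n}-(\widehat\Theta-\Theta)\mathfrak X_{<n}$, and writing $\widehat\Delta:=\widehat\Theta-\Theta$, this rearranges into
\[
\widehat\Delta\Bigl(\frac1n\mathfrak X_{<n}\mathfrak X_{<n}^{\T}\Bigr)=\frac1n\mathfrak Z_{<n}\mathfrak X_{<n}^{\T}-\frac\lambda2\,\widehat V .
\]

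Next I would work on the event $\mathcal A\cap\mathcal D$, where $\mathcal A=\bigl\{\frac1n\|\mathfrak Z_{<n}\mathfrak X_{<n}^{\T}\|_{\op}\le c_1\|\Sigma\|_{\op}(1-\gamma)^{-1}\sqrt{p/n}\bigr\}$ with $c_1$ the constant of Lemma~\ref{lem:bound_stoch_term2}, and $\mathcal D=\bigl\{\frac1n\sigma_{\min}(\mathfrak X_{<n}\mathfrak X_{<n}^{\T})\ge\sigma_{\min}(\Sigma)/4\bigr\}$; by Lemmas~\ref{lem:bound_stoch_term2} and~\ref{lem:low_bound_sampling_operator} each holds with probability at least $1-1/p$ when $n\ge p$, so $\P(\mathcal A\cap\mathcal D)\ge1-2/p$. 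On $\mathcal D$ the Gram matrix is invertible, so the displayed identity and submultiplicativity of the operator norm give
\[
\|\widehat\Delta\|_{\op}\le\Bigl(\frac1n\|\mathfrak Z_{<n}\mathfrak X_{<n}^{\T}\|_{\op}+\frac\lambda2\|\widehat V\|_{\op}\Bigr)\bigl\|\bigl(\tfrac1n\mathfrak X_{<n}\mathfrak X_{<n}^{\T}\bigr)^{-1}\bigr\|_{\op}\le\frac4{\sigma_{\min}(\Sigma)}\Bigl(\frac1n\|\mathfrak Z_{<n}\mathfrak X_{<n}^{\T}\|_{\op}+\frac\lambda2\Bigr).
\]
Inserting $\|\widehat V\|_{\op}\le1$, the bound of $\mathcal A$, and the value $\lambda=2c_1\|\Sigma\|_{\op}(1-\gamma)^{-1}\sqrt{p/n}$ from~\eqref{eq:lambda} (chosen precisely so that $\lambda/2$ matches the noise bound), together with $\kappa(\Sigma)=\|\Sigma\|_{\op}/\sigma_{\min}(\Sigma)$, bounds the right-hand side by $8c_1(1-\gamma)^{-1}\kappa(\Sigma)\sqrt{p/n}$, which is the assertion.

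The conceptual heart of the argument is the one line of KKT algebra combined with invertibility of $\frac1n\mathfrak X_{<n}\mathfrak X_{<n}^{\T}$, and this invertibility is the only place the hypothesis $n\ge p$ enters. What remains is bookkeeping: checking that Lemmas~\ref{lem:bound_stoch_term2} and~\ref{lem:low_bound_sampling_operator} indeed deliver probability $1-1/p$ in the regime $n\ge p$ (rather than merely $1-c\exp(-c'p)$), and carrying the numerical constants through the chain so that the final factor is exactly $8c_1$. Neither is a genuine obstacle.
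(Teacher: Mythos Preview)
Your argument is essentially identical to the paper's own proof: both extract the KKT identity, invert the empirical Gram matrix on the high-probability event, and bound each factor via the stochastic lemmas. The one slip is that the lower bound $\frac{1}{n}\sigma_{\min}(\mathfrak X_{<n}\mathfrak X_{<n}^{\T})\ge\sigma_{\min}(\Sigma)/4$ comes from Lemma~\ref{lem:upper_bound_sampling_operator} (its $\sigma_{\min}$ clause), not from Lemma~\ref{lem:low_bound_sampling_operator}, which only lower-bounds $\frac{1}{n}\sum_t\|AX_t\|_2^2$ for a \emph{fixed} matrix $A$ and does not give uniform control over unit vectors.
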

	
	\begin{proof}
		We have
		$$
		\frac 1n \sum_{i=0}^{n-1} \|X_{i+1}-MX_i\|_2^2=\frac1 n  \|\mathfrak{Y}_n-M \mX_n\|_2^2.
		$$
		Its the derivative with respect to $M$  is equal to $-\dfrac2n (\mathfrak{Y}- M\mX_n)\mX_n^{\T}$. On the other hand, for any subgradient matrix $W\in \partial \|\widehat \Theta\|_*$, we are guaranteed $\|W\|_{\mathrm{op}}\le 1$. By the Karush-Kuhn-Tucker condition, any solution $\widehat \Theta$ of~\eqref{eq:Theta_hat} must satisfy
		$$
		-\dfrac2n (\mathfrak{Y}_n- \widehat \Theta\mX_n)\mX_n^{\T} +\lambda_1 W=0.
		$$
		Note that $\mathfrak{Y}_n=\Theta \mX_n+\mZ_n$, then we can rewrite the KKT condition as 
		$$
		\dfrac2n ((\widehat \Theta -\Theta)\mX_n-\mZ_n)\mX_n^{\T} +\lambda_1 W=0
		$$
		It implies that
		$$
		\widehat \Theta -\Theta= \left(\dfrac2n \mZ_n\mX_n^{\T}-\lambda_1W\right) \left(\dfrac2n \mX_n\mX_n^{\T}\right)^{-1}.
		$$
		Thus, from Lemma~\ref{lem:upper_bound_sampling_operator} and Lemma~\ref{lem:bound_stoch_term2} it follows that 
		\begin{align*}
		\|\widehat \Theta -\Theta\|_{\mathrm{op}}&=\Bigl\| \left(\dfrac2n \mZ_n\mX_n^{\T}-\lambda_1W\right) \left(\dfrac2n \mX_n\mX_n^{\T}\right)^{-1}\Bigr\|_{\mathrm{op}}
		\le  \left(2 \Bigl\|\frac 1n \mZ_n\mX_n^{\T}\Bigr\|_{\mathrm{op}}+\lambda_1\right) \Bigl\|\left(\dfrac2n\mX_n\mX_n^{\T}\right)^{-1}\Bigr\|_{\mathrm{op}}\\
		&\le 2c_2^*\frac{\|\Sigma\|_{\mathrm{op}}}{1-\gamma}\sqrt{\frac pn}\sigma_{\min}^{-1}\Bigl(\dfrac1n\mX_n\mX_n^{\T}\Bigr)\le 8 c_2^* \frac{\kappa(\Sigma)}{1-\gamma} \sqrt{\frac pn}
		\end{align*}
		with probability at least $1-2/p$. 
	\end{proof}

\section{Auxiliary results}

Throughout this section $\mX_n=(X_1,\dots,X_n)$ denotes a $p\times n$ matrix of $n$ consecutive realizations $X_t$ of a $\mathrm{VAR}_p(\Theta,\Sigma_Z)$ process defined in~\eqref{eq:var_def}.  The corresponding noise matrix $\mathfrak{Z}_n=(Z_1,\dots,Z_n)$ is a $p\times n$ matrix  with i.i.d. Gaussian centered columns $Z_t$ with the covariance matrix $\Sigma_Z$.

\begin{lemma}\label{lem:triangle_inequality_trace_norm}
	Assume that we solve \eqref{eq:Theta_hat1} - \eqref{eq:TestStat} with  $\lambda^t_1\geq 3\| \mathfrak{Z}_{t}\mX_{t}^{\T}\|_{\mathrm{op}}$ and  $\lambda^t_2\geq 3\| \mathfrak{Z}_{\ge t}\mX_{\ge t}^{\T}\|_{\mathrm{op}}$. Then, for $i=1,2$,   
	we have 
	\begin{align}\label{inequality_penalty1}
	\| \mathbf {Pr}^{\bot}_{\Theta_{i}}\bigl[\Theta-\widehat{\Theta}_i(t)\bigr ]\|_{*}\leq 5\| \mathbf {Pr}_{\Theta_{i}}\bigl [\Theta-\widehat{\Theta}_i(t)\bigr]\|_{*}.
	\end{align}
\end{lemma}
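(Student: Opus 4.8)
The plan is to run the standard ``basic inequality'' argument for nuclear-norm penalized least squares. Fix $i\in\{1,2\}$ and write $\Theta$ for the true transition matrix on the interval over which $\widehat\Theta:=\widehat\Theta_i(t)$ is computed (so $\Theta=\Theta_i$, and $\Theta=\Theta_1=\Theta_2$ under $\mathrm{H}_0$); let $n$ be the number of observations on that interval and $\mathfrak X,\mathfrak Y,\mathfrak Z$ the corresponding design, response and innovation matrices, so that $\mathfrak Y=\Theta\mathfrak X+\mathfrak Z$. Since we are in the regime $t>p$ (resp.\ $T-t>p$), the penalty in \eqref{eq:Theta_hat1}--\eqref{eq:Theta_hat2} is the plain nuclear norm, i.e.\ $\widehat\Theta$ minimizes $M\mapsto\tfrac1n\|\mathfrak Y-M\mathfrak X\|_2^2+\lambda_i^t\|M\|_*$ over $\mathbb R^{p\times p}$. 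Put $\widehat\Delta:=\widehat\Theta-\Theta$ and split it as $\widehat\Delta=\widehat\Delta'+\widehat\Delta''$ with $\widehat\Delta':=\mathbf{Pr}_\Theta[\widehat\Delta]$ and $\widehat\Delta'':=\mathbf{Pr}^\bot_\Theta[\widehat\Delta]$; the claim \eqref{inequality_penalty1} is then $\|\widehat\Delta''\|_*\le 5\|\widehat\Delta'\|_*$.

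First I would record the basic inequality. Comparing $\widehat\Theta$ with the feasible point $\Theta$ gives $\tfrac1n\|\mathfrak Y-\widehat\Theta\mathfrak X\|_2^2+\lambda_i^t\|\widehat\Theta\|_*\le\tfrac1n\|\mathfrak Z\|_2^2+\lambda_i^t\|\Theta\|_*$. Writing $\mathfrak Y-\widehat\Theta\mathfrak X=-\widehat\Delta\mathfrak X+\mathfrak Z$, expanding the square, cancelling $\tfrac1n\|\mathfrak Z\|_2^2$ and discarding the nonnegative term $\tfrac1n\|\widehat\Delta\mathfrak X\|_2^2$ yields
\[
\lambda_i^t\|\widehat\Theta\|_*\le\tfrac2n\langle\widehat\Delta,\mathfrak Z\mathfrak X^{\T}\rangle+\lambda_i^t\|\Theta\|_*.
\]
Then I would invoke the two usual ingredients. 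By decomposability of the nuclear norm at $\Theta$: since $\widehat\Delta''=\mathcal P_{S_1^\bot(\Theta)}\widehat\Delta\,\mathcal P_{S_2^\bot(\Theta)}$ has column space in $S_1^\bot(\Theta)$ and row space in $S_2^\bot(\Theta)$, one has $\|\Theta+\widehat\Delta''\|_*=\|\Theta\|_*+\|\widehat\Delta''\|_*$, hence by the triangle inequality $\|\widehat\Theta\|_*=\|\Theta+\widehat\Delta''+\widehat\Delta'\|_*\ge\|\Theta\|_*+\|\widehat\Delta''\|_*-\|\widehat\Delta'\|_*$. By the matrix H\"older inequality (nuclear/operator duality), $\langle\widehat\Delta,\mathfrak Z\mathfrak X^{\T}\rangle\le\|\widehat\Delta\|_*\,\|\mathfrak Z\mathfrak X^{\T}\|_{\mathrm{op}}$, and the hypothesis $\lambda_i^t\ge 3\|\mathfrak Z_t\mathfrak X_t^{\T}\|_{\mathrm{op}}$ then gives $\tfrac2n\langle\widehat\Delta,\mathfrak Z\mathfrak X^{\T}\rangle\le\tfrac23\lambda_i^t\|\widehat\Delta\|_*$.

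Substituting both bounds into the basic inequality, cancelling $\lambda_i^t\|\Theta\|_*$, dividing by $\lambda_i^t$ and using $\|\widehat\Delta\|_*\le\|\widehat\Delta'\|_*+\|\widehat\Delta''\|_*$, I get $\|\widehat\Delta''\|_*-\|\widehat\Delta'\|_*\le\tfrac23\big(\|\widehat\Delta'\|_*+\|\widehat\Delta''\|_*\big)$, i.e.\ $\tfrac13\|\widehat\Delta''\|_*\le\tfrac53\|\widehat\Delta'\|_*$, which is exactly \eqref{inequality_penalty1}. There is no real obstacle here: the argument is routine, and the only points needing care are (i) verifying the decomposability identity $\|\Theta+\widehat\Delta''\|_*=\|\Theta\|_*+\|\widehat\Delta''\|_*$ directly from the definition of $\mathbf{Pr}^\bot_\Theta$ (two matrices with mutually orthogonal column spaces and mutually orthogonal row spaces have additive nuclear norms), (ii) checking that the slack in the assumed lower bound on $\lambda_i^t$ produces exactly the factor $5$, and (iii) being explicit that this is the regime in which the penalty is $\|M\|_*$; in the complementary regime $t\le p$ (resp.\ $T-t\le p$) the identical argument, applied with decomposability at $\Theta\mathfrak X$ in place of $\Theta$, yields the analogous inequality \eqref{triangle_inequality_trace_norm}.
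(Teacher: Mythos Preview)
Your proof is correct and follows essentially the same route as the paper's: both arguments combine (i) optimality of $\widehat\Theta$ at the feasible point $\Theta$, (ii) the decomposability identity $\|\Theta+\widehat\Delta''\|_*=\|\Theta\|_*+\|\widehat\Delta''\|_*$ (which the paper records as \eqref{triangle_inequality_trace_norm} and proves inline), and (iii) the matrix H\"older inequality together with the hypothesis on $\lambda_i^t$, arriving at the same $\tfrac13\|\widehat\Delta''\|_*\le\tfrac53\|\widehat\Delta'\|_*$ arithmetic. The only cosmetic differences are that the paper frames step (i) as a convexity lower bound followed by the minimizer inequality (rather than a single ``basic inequality''), and that the paper works with the unnormalized least-squares term while you keep the $1/n$; both lead to the same conclusion under the stated hypothesis.
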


\begin{proof}
	We will prove \eqref{inequality_penalty1} for $i=1$. The proof for  $i=2$ is completely analogous.
	Using the triangle inequality, for any two matrices $A$ and $B$ we easily get
	\begin{align}\label{triangle_inequality_trace_norm}
	\| A\|_{*}-\| B\|_{*}
	\leq 
	\| \mathbf {Pr}_{A}\left [ A-B\right ]\|_{*}-\| \mathbf {Pr}^{\bot}_{A}\left [A-B\right ]\|_{*}.
	\end{align}
	Indeed, 
	\begin{align*}
	\| B\|_{*}&=\| B+A-A\|_{*}	
	=\| A+\mathbf {Pr}_{A}\left [B-A\right ] +\mathbf {Pr}^{\bot}_{A}\left [ B-A\right ]\|_{*}
	\\&
	\geq\| A +\mathbf {Pr}^{\bot}_{A}\left [ B-A\right ]\|_{*}	-\| \mathbf {Pr}_{A}\left [B-A\right ]\|_{*}
	\\&
	=\| A \|_{*}+\|\mathbf {Pr}^{\bot}_{A}\left [ B-A\right ]\|_{*}	-\| \mathbf {Pr}_{A}\left [B-A\right ]\|_{*}
	\end{align*}
	which implies \eqref{triangle_inequality_trace_norm}.
	
	Now, by convexity of $\| \mathfrak{Y}_{t}-\Theta \mX_{t}\|^{2}_{2}$ we have
	\begin{align*}
	\| \mathfrak{Y}_{t}-\widehat{\Theta}_1^t \mX_{t}\|^{2}_{2}-	\| \mathfrak{Y}_{t}-{\Theta} \mX_{t}\|^{2}_{2}&\geq -2\left\langle \mathfrak{Y}_{t}-{\Theta} \mX_{t},\left (\widehat{\Theta}_1^t-\Theta\right )\mX_{t} \right \rangle
	\\&
	=-2\left\langle \mathfrak{Z}_{t}\mX_{t}^{\T},\widehat{\Theta}_1^t-\Theta \right \rangle 
	\\&
	\geq  -2\| \widehat{\Theta}_1^t-\Theta\|_{*}\| \mathfrak{Z}_{t}\mX_{t}^{\T}\|_{\mathrm{op}}\geq
	-\dfrac{2}{3}\lambda^{t}_1\| \widehat{\Theta}_1^t-\Theta\|_{*}.
	\end{align*}
	On the other hand, using the definition of $\widehat{\Theta}_1$ we compute
	\begin{align*}
	\lambda^{t}_1\| \widehat{\Theta}_{1}(t)\|_{*}-\lambda^{t}_1\| \Theta\|_{*}&\leq  \| \mathfrak{Y}_{t}-{\Theta} \mX_{t}\|^{2}_{2}-	\| \mathfrak{Y}_{t}-\widehat{\Theta}_1^t \mX_{t}\|^{2}_{2}	\leq \dfrac{2}{3}\lambda^{t}_1\| \widehat{\Theta}_1^t-\Theta\|_{*},
	\end{align*}
	which combined with \eqref{triangle_inequality_trace_norm} proves \eqref{inequality_penalty1}.
\end{proof}

The following result about the largest and smallest eigenvalue of empirical covariance matrix of the process~\eqref{eq:var} follows from Lemma~4 of \citep{negahban2011}.
\begin{lemma}\label{lem:upper_bound_sampling_operator}
	Let $\delta\in(0,1)$ and $c_0>0$ be a universal constant. The operator norm of the matrix  $\frac{1}n \mX_{n}\mX_{n}^{\T}$ is well controlled in terms of the covariance matrix $\Sigma$:
	\begin{equation}\label{eq:sigma_max}
	\P\left\{\sigma_{\max}\left (\frac1n \mX_{n}\mX_{n}^{\T}\right )
	\leq 	2\| \Sigma\|_{\mathrm{op}}\left [1+\dfrac{c_0}{1-\gamma}\left (\dfrac{p}{n}\vee 1-\dfrac{\log(\delta)}{n}\right )\right ]\right\} \ge 1-2\delta
	\end{equation}
	and, if $n>p$, for some constants $c_1,c_2>0$,
	\begin{equation}\label{eq:sigma_min}
	\P\left\{	\sigma_{\min}\left (\frac1n \mX_{n}\mX_{n}^{\T}\right )
	\geq \dfrac{\sigma_{\min}(\Sigma)}{4}	\right\}\ge 1-c_1 e^{-c_2 p}.
	\end{equation}
\end{lemma}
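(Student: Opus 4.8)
The plan is to obtain Lemma~\ref{lem:upper_bound_sampling_operator} as an essentially direct consequence of the concentration bound for the empirical covariance matrix of a stationary Gaussian VAR process proved in Lemma~4 of \citep{negahban2011} (applied with stationary covariance $\Sigma$); for completeness I would reconstruct the argument as follows. Under Assumption~\ref{ass:stability} the process admits the moving-average representation $X_t=\sum_{k\ge 0}\Theta^k Z_{t-k}$, exhibiting $\mX_n=(X_1,\dots,X_n)$ as a linear image of the i.i.d.\ Gaussian innovations $\mathfrak{Z}_n$: its columns are identically distributed $\cN_p(0,\Sigma)$ but temporally dependent, the dependence being quantified by the spectral density $f(\omega)=(I-\Theta e^{-i\omega})^{-1}\Sigma_Z(I-\Theta^{\T}e^{i\omega})^{-1}$, which satisfies $\sup_\omega\|f(\omega)\|_{\op}\le \|\Sigma_Z\|_{\op}(1-\gamma)^{-2}\le\|\Sigma\|_{\op}(1-\gamma)^{-2}$.

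For the upper bound~\eqref{eq:sigma_max}, write $\sigma_{\max}(\tfrac1n\mX_n\mX_n^{\T})=\sup_{u\in S^{p-1}}\tfrac1n\sum_{t=1}^n\langle u,X_t\rangle^2$. For a fixed unit vector $u$ the random vector $(\langle u,X_1\rangle,\dots,\langle u,X_n\rangle)$ is a linear image of the innovations with Toeplitz covariance equal to that of the scalar stationary process $\langle u,X_t\rangle$; the standard bound on the operator norm of a Toeplitz covariance matrix by a universal multiple of the supremum of its spectral density shows this covariance has operator norm at most $C\|\Sigma\|_{\op}(1-\gamma)^{-2}$. Hence $\tfrac1n\sum_t\langle u,X_t\rangle^2$ is a quadratic form in a standard Gaussian vector with mean $u^{\T}\Sigma u\le\|\Sigma\|_{\op}$ and sub-exponential parameter controlled by $C\|\Sigma\|_{\op}(1-\gamma)^{-2}/n$, so a Hanson--Wright / $\chi^2$-type upper-tail bound gives, for each $u$, a deviation probability exponentially small in $n(1-\gamma)^2$. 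A standard $\varepsilon$-net over $S^{p-1}$ of cardinality at most $9^p$ together with a union bound promotes this to the uniform statement, producing the $(p/n)\vee 1$ and $-\log\delta/n$ terms inside the bracket and the prefactor $c_0/(1-\gamma)$, with total failure probability at most $2\delta$.

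For the lower bound~\eqref{eq:sigma_min} one runs the same scheme on the lower tail: for each fixed $u$, $\P\{\tfrac1n\sum_t\langle u,X_t\rangle^2<\tfrac12 u^{\T}\Sigma u\}\le e^{-cn}$ with a constant $c=c(\gamma)>0$, by Gaussian concentration of the quadratic form (same variance proxy, and mean $u^{\T}\Sigma u\ge\sigma_{\min}(\Sigma)$). Since $n>p$, a $\tfrac14$-net union bound yields failure probability at most $9^p e^{-cn}\le c_1 e^{-c_2 p}$, and on the complement event $\sigma_{\min}(\tfrac1n\mX_n\mX_n^{\T})\ge\tfrac14\sigma_{\min}(\Sigma)$, the extra factor $\tfrac12$ being lost in passing from the net to the sphere. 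To make the bound hold for all $n>p$ rather than only $n\ge C(\gamma)p$ one should, as in \citep{negahban2011}, replace the crude net by a Gaussian comparison (Gordon / Sudakov--Fernique) inequality applied to the whitened i.i.d.\ Gaussian matrix, trading the $\log 9$ loss for a $(\sqrt n-\sqrt p)$-type bound.

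The main obstacle --- indeed the only substantive point --- is the temporal dependence of the columns of $\mX_n$, which forbids a direct appeal to the standard concentration of sample covariances for i.i.d.\ data; the resolution, following \citep{negahban2011}, is to factor $(X_1,\dots,X_n)$ through the moving-average operator and to control that operator's norm via the spectral-density estimate above. This is exactly what makes the variance proxy of the relevant quadratic forms independent of $n$, and it is the source of the $(1-\gamma)^{-1}$ factor in~\eqref{eq:sigma_max} and of the fact that the exponent in~\eqref{eq:sigma_min} is proportional to $n$, hence to $p$ when $n>p$. Everything else is routine net-and-concentration bookkeeping, and the constants are not tracked sharply.
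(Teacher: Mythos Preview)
Your proposal is essentially the same argument as the paper's: fix a unit vector $u$, control the quadratic form $\tfrac1n\sum_t\langle u,X_t\rangle^2$ via Hanson--Wright using that $(\langle u,X_t\rangle)_{t\le n}\sim\cN_n(0,R)$ with Toeplitz covariance $R$, then pass to the supremum over $S^{p-1}$ by an $\varepsilon$-net. The paper in fact only writes out the $\sigma_{\max}$ part and defers $\sigma_{\min}$ entirely to \citep{negahban2011}, so your sketch of the lower-tail argument is more detailed than what appears here.

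One small discrepancy worth noting: you bound $\|R\|_{\op}$ through the spectral density, obtaining $\|\Sigma_Z\|_{\op}(1-\gamma)^{-2}\le\|\Sigma\|_{\op}(1-\gamma)^{-2}$, whereas the paper uses the $\ell_1$-of-autocovariances bound $\|R\|_{\op}\le\sum_{k}|u^{\T}\Sigma(\Theta^{\T})^{|k|}u|\le 2\|\Sigma\|_{\op}/(1-\gamma)$, which is what actually produces the factor $c_0/(1-\gamma)$ (rather than $c_0/(1-\gamma)^2$) in~\eqref{eq:sigma_max}. Your spectral-density route is perfectly valid but delivers one extra power of $(1-\gamma)^{-1}$ unless you rewrite the density as $\sum_h\Gamma(h)e^{-ih\omega}$ with $\|\Gamma(h)\|_{\op}\le\gamma^{|h|}\|\Sigma\|_{\op}$, which recovers the same bound.
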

\begin{proof}
	We follow the proof of Lemma~4 from~\citep{negahban2011}. The only difference is that we allow the dimension $p$ to be greater than the number of observations, $p>n$. 
	
	Let $u \in \mathbb{R}^{p}$ with $\| u \|_2= 1$ and $Y \in \mathbb{R}^{n}$  be a random vector with elements $Y_{i}= \langle u,X_{i} \rangle$. By the covering argument from the proof of Lemma~4 from~\citep{negahban2011}, it can be shown that 
	$$
	\P\left\{\frac1n \sigma_{\max}(\mX_n\mX_n^T) >t \right\}\le 4^p \max_{u\in \mathcal A} \P \left\{\frac1 n \sum_{
		i=1}^n \langle u,X_{i} \rangle^2>t/2\right\},
	$$
	where $\mathcal A$ is a $1/2$-cover of  $S^{p-1}$.  Thus, we need to bound $\dfrac{1}{n} \| Y \|_{2}^{2}$. Note that $Y\sim\mathcal{N}_{n}(0,R)$ with the covariance matrix $R$ with the elements $R_{ij}=u^\T \Sigma (\Theta^\T)^{|i-j|}u$.  By the Hanson-Wright inequality, we have 
	$$
	\P \left(| \| Y \|_{2}^{2}-\mathrm{tr}(R) \mid  > x \right)\leq 2\exp\left[-c \min\left(\frac{x^{2}}{\| R \| _{2}^{2}},\frac{x}{\| R \| _{\mathrm{op}}}\right)\right].
	$$
	Let $x=\dfrac{2\| R \| _{\mathrm{op}}}{c}\Bigl((p\vee n)-\frac 12\log \delta\Bigr)$. Then 	
	$$
	\P \left(  \frac 1n \| Y \|_{2}^{2}  > \frac 1n\tr(R) + \dfrac{ 2\| R \|_{\mathrm{op}}}{cn}\left(p\vee n-\frac12\log \delta \right) \right)\leq 2 \delta e^{-2(p\vee n)}.
	$$
	Using the bounds   $\| R \|_{2}^{2} \leq n \| R \|_{\mathrm{op}}^{2}$,	$\| R \|_{\mathrm{op}} \leq \dfrac{2 \| \Sigma \|_{\mathrm{op}}}{1-\gamma}$, and $\tr(R)/n\le \|\Sigma\|_{\mathrm{op}}$, we obtain~\eqref{eq:sigma_max}.
\end{proof}

\begin{lemma}\label{lem:bound_stoch_term1}
	Let $\mathfrak{Z}_n$ be a  $p\times n$ matrix with $n$ i.i.d. Gaussian  columns with zero mean and covariance matrix $\Sigma_Z$.
	For any $\delta\in(0,1)$, there exists a universal constant $c_{*}$ independent of $p$, $n$ and $\Sigma_{Z}$ such  that
	$$
	\P\left\{  \| \mathfrak{Z}_{n}\|_{\op} \leq c_1^* \sqrt{\| \Sigma_{Z}\|_{\mathrm{op}} }\left ( \sqrt{n}+\sqrt{p}+\sqrt{\log(2/\delta)}\right )  \right\}\ge 1-\delta. 
	$$
	
	{\it Proof} follows from Theorem 4.4.5 in \cite{vershynin_2018}.
	
	%
	%
\end{lemma}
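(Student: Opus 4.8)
The plan is to reduce the statement to a Gaussian matrix with i.i.d.\ standard normal entries and then quote a standard non-asymptotic operator-norm bound. I would write $\mathfrak{Z}_n = \Sigma_Z^{1/2} G$, where $\Sigma_Z^{1/2}$ is the symmetric positive semi-definite square root of $\Sigma_Z$ and $G \in \mathbb{R}^{p\times n}$ is a matrix with i.i.d.\ $\cN(0,1)$ entries; this representation is valid precisely because the columns of $\mathfrak{Z}_n$ are independent with common law $\cN_p(0,\Sigma_Z)$. By submultiplicativity of the operator norm,
$$\|\mathfrak{Z}_n\|_{\op} \le \|\Sigma_Z^{1/2}\|_{\op}\,\|G\|_{\op} = \sqrt{\|\Sigma_Z\|_{\op}}\,\|G\|_{\op},$$
so it remains only to control $\|G\|_{\op}$.

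For $\|G\|_{\op}$ I would invoke Theorem~4.4.5 of \citep{vershynin_2018} on the operator norm of matrices with independent sub-Gaussian entries: since the entries of $G$ are i.i.d.\ standard Gaussians, whose $\psi_2$-norm is an absolute constant, there is an absolute $C>0$ with
$$\P\bigl\{\|G\|_{\op} \le C\bigl(\sqrt p + \sqrt n + t\bigr)\bigr\} \ge 1 - 2e^{-t^2}, \qquad t \ge 0.$$
Choosing $t = \sqrt{\log(2/\delta)}$ makes the failure probability at most $\delta$, and combining with the previous display gives, with probability at least $1-\delta$,
$$\|\mathfrak{Z}_n\|_{\op} \le C\sqrt{\|\Sigma_Z\|_{\op}}\,\bigl(\sqrt n + \sqrt p + \sqrt{\log(2/\delta)}\bigr),$$
which is the asserted bound with $c_1^* = C$.

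If one prefers a self-contained argument in the style of Lemma~\ref{lem:upper_bound_sampling_operator}, the same estimate can be obtained without the black box: $\E\|G\|_{\op}\le \sqrt n+\sqrt p$ by Gordon's comparison (Chevet) inequality, the map $G\mapsto\|G\|_{\op}$ is $1$-Lipschitz with respect to the Frobenius norm of its entries, and Gaussian concentration then yields $\P\{\|G\|_{\op}\ge\sqrt n+\sqrt p+t\}\le e^{-t^2/2}$; taking $t=\sqrt{2\log(1/\delta)}$ and absorbing $\sqrt2$ and $\log(1/\delta)\le\log(2/\delta)$ into an absolute constant reproduces the claim. A third route is a direct $\varepsilon$-net argument: bound $\sup_{u\in\mathcal N_p,\,v\in\mathcal N_n}\langle u,Gv\rangle$ over $1/4$-nets of $S^{p-1}$ and $S^{n-1}$ (each such inner product being $\cN(0,1)$), then take a union bound over the $\le 9^{p+n}$ pairs with the standard Gaussian tail and pay the fixed factor relating this maximum to $\|G\|_{\op}$.

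There is no genuine obstacle in this lemma; the only points meriting care are (i) the reduction $\mathfrak{Z}_n=\Sigma_Z^{1/2}G$, which is exactly what neutralises the within-column correlation, and (ii) the observation that passing to the operator norm costs only the multiplicative factor $\sqrt{\|\Sigma_Z\|_{\op}}$ and introduces no dependence on $\kappa(\Sigma_Z)$ or on $p$ — this is what makes the bound usable, with the stated form of the constants, in the proofs of Lemmas~\ref{lem:typeIerror} and~\ref{lem:typeIIerror}.
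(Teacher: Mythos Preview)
Your proposal is correct and is exactly the argument the paper has in mind: the paper's proof is the single line ``follows from Theorem 4.4.5 in \cite{vershynin_2018}'', and your factorisation $\mathfrak{Z}_n=\Sigma_Z^{1/2}G$ together with submultiplicativity is precisely the reduction needed to apply that theorem (which concerns matrices with independent sub-Gaussian \emph{entries}) to a matrix with correlated columns. The alternative routes you mention (Gordon plus Gaussian concentration, or a direct net argument) are standard equivalents and would also suffice.
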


The following lemma reformulates  Lemma~5 in \citep{negahban2011}.
\begin{lemma}\label{lem:bound_stoch_term2}
	Let $\mX_n=(X_1,\dots,X_n)\in \bR^{p\times n}$ be a realization of $n$ observations of a $\mathrm{VAR}_p(\Theta,\Sigma)$ process~\eqref{eq:var_def} and $\mathfrak{Z}_n$ be the corresponding $p \times n$ matrix with i.i.d.  columns of Gaussian $\cN_p(0,\Sigma_Z)$ noise.
	There exist universal constants $c_2^*$, $k_1, k_2$ (independent of $p$, $n$, $\Sigma$ and $\Sigma_Z$) such that
	$$
	\P\left\{\frac{1}{n}\| \mathfrak{Z}_n \mX_n^{\T}\|_{\op}\geq \dfrac{c_2^*\|\Sigma\|_{\op}}{1-\gamma}\sqrt{\dfrac{p}{n}}
	\right \}\leq k_1\exp\left (-k_2p\right). 
	$$
\end{lemma}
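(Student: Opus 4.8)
The plan is to reprove this as a reformulation of Lemma~5 of \citep{negahban2011}, adapted to our setup. First, by the model equation the product at hand is the decoupled one, $\frac1n\mathfrak Z_n\mX_n^{\T}=\frac1n\sum_t Z_t X_{t-1}^{\T}$, in which each innovation $Z_t$ is independent of the observation $X_{t-1}$ it is paired with and of all earlier observations — the causal structure of the VAR. Then I would pass to the variational formula $\frac1n\|\mathfrak Z_n\mX_n^{\T}\|_{\op}=\sup_{u,v\in S^{p-1}}\frac1n\sum_t\langle u,Z_t\rangle\langle v,X_{t-1}\rangle$ and discretize: a $1/4$-net $\mathcal N$ of $S^{p-1}$ has cardinality at most $9^p$ and controls the bilinear supremum up to a universal multiplicative constant, so it suffices to bound $\frac1n\sum_t\langle u,Z_t\rangle\langle v,X_{t-1}\rangle$ for each fixed pair of unit vectors, with a failure probability small enough to survive the union bound over $\mathcal N\times\mathcal N$.

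For fixed $u,v$, the concatenated vector $Y=\bigl(\langle v,X_0\rangle,\dots,\langle v,X_{n-1}\rangle,\langle u,Z_1\rangle,\dots,\langle u,Z_n\rangle\bigr)\in\bR^{2n}$ is centered Gaussian, and $\sum_t\langle u,Z_t\rangle\langle v,X_{t-1}\rangle=Y^{\T}QY$ for the off-diagonal matrix $Q=\tfrac12\begin{pmatrix}0 & I_n\\ I_n & 0\end{pmatrix}$. The key is to control $\|\Sigma_Y\|_{\op}$ with $\Sigma_Y=\operatorname{Cov}(Y)$: the autocovariance of $(\langle v,X_t\rangle)_t$ has operator norm at most $2\|\Sigma\|_{\op}/(1-\gamma)$ (the bound $\|R\|_{\op}\le 2\|\Sigma\|_{\op}/(1-\gamma)$ used in the proof of Lemma~\ref{lem:upper_bound_sampling_operator}), the autocovariance of $(\langle u,Z_t\rangle)_t$ equals $(u^{\T}\Sigma_Z u)I_n$, and — using $Z_t\perp X_{t-1}$ — the cross-covariance block is strictly lower triangular with entries of the form $u^{\T}\Theta^{k}\Sigma_Z v$ decaying geometrically in $k$, hence of vanishing trace (so $\E[Y^{\T}QY]=0$) and operator norm at most $\|\Sigma_Z\|_{\op}/(1-\gamma)$; altogether $\|\Sigma_Y\|_{\op}\lesssim\|\Sigma\|_{\op}/(1-\gamma)$. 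Applying the Hanson--Wright inequality to the centered chaos $Y^{\T}QY$, with $\|\Sigma_Y^{1/2}Q\Sigma_Y^{1/2}\|_{\op}\le\tfrac12\|\Sigma_Y\|_{\op}$ and $\|\Sigma_Y^{1/2}Q\Sigma_Y^{1/2}\|_2\le\sqrt{2n}\cdot\tfrac12\|\Sigma_Y\|_{\op}$ (rank at most $2n$), at deviation level a multiple of $n\|\Sigma_Y\|_{\op}\sqrt{p/n}$ and in the regime $p\le n$ in which the lemma is applied, the subgaussian term dominates the Hanson--Wright exponent and one obtains
\[
\P\Bigl\{\tfrac1n\bigl|\textstyle\sum_t\langle u,Z_t\rangle\langle v,X_{t-1}\rangle\bigr|\ge c\,\tfrac{\|\Sigma\|_{\op}}{1-\gamma}\sqrt{\tfrac pn}\Bigr\}\le 2e^{-c'p},
\]
with $c'$ as large as needed by taking the absolute constant $c$ large.

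Finally I would take the union bound over $\mathcal N\times\mathcal N$, absorbing the factor $9^{2p}$ into the exponent by enlarging $c$, and undo the net discretization at the cost of a further universal factor, arriving at the stated inequality with $c_2^*$ a multiple of $c$ and $k_1,k_2$ absolute. I expect the genuinely delicate point to be the operator-norm bound on $\Sigma_Y$ — in particular, that the innovation/observation cross-block has vanishing trace and operator norm $O(\|\Sigma_Z\|_{\op}/(1-\gamma))$ — since this is precisely where stability $\|\Theta\|_{\op}=\gamma<1$, through the geometric decay of the VAR autocovariances, enters; the Hanson--Wright step and the $\varepsilon$-net union bound are routine and mirror \citep{negahban2011}.
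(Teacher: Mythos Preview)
The paper does not prove this lemma; it merely cites it as a reformulation of Lemma~5 in \citep{negahban2011}. Your sketch correctly reconstructs that argument---$\varepsilon$-net reduction, writing the bilinear form for fixed $(u,v)$ as a centered Gaussian quadratic form in the concatenated vector, bounding $\|\Sigma_Y\|_{\op}$ via the geometric autocovariance decay implied by $\|\Theta\|_{\op}=\gamma<1$, then Hanson--Wright and a union bound---and is sound; your silent shift to the decoupled pairing $\sum_t Z_t X_{t-1}^{\T}$ matches the intended meaning of the ``corresponding'' noise matrix and how the lemma is actually invoked in the paper, and your caveat that the subgaussian branch dominates only when $n\ge p$ is accurate and harmless since the lemma is applied exclusively in that regime.
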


The proof of the following lemma is based on the Hanson--Wright inequality.

\begin{lemma}\label{lem:low_bound_sampling_operator}
	Let $\delta\in(0,1)$ and $A \in \mathbb{R}^{p\times p}$. Let $X=(X_1,\dots,X_n)$ be a realization of a $\mathrm{VAR}_p(\Theta,\Sigma)$ process defined in~\eqref{eq:var_def}. Assume that for some universal constant $c>0$,
	\begin{equation}\label{cond:threshold_frobenius_norm}
	\frac{\|A\|_2}{\|A\|_{\mathrm{op}}} \geq 
	\dfrac{2(1+\gamma^{2})\sigma_{\max}(\Sigma)}{ \sigma_{\min}(\Sigma_{Z})\wedge \sigma_{\min}(\Sigma)  }\sqrt{\frac1c\dfrac{\log(2/\delta)}{n}}.
	\end{equation}
	Then for any $\delta>0$
	\begin{align*}
	\P\left [\frac{1}{n}\sum_{t=1}^{n}\| AX_t\| _{2}^{2}\geq \dfrac{ \sigma_{\min}(\Sigma_{Z})\wedge \sigma_{\min}(\Sigma) }{2(1+\gamma^{2})}\| A\|^{2}_{2}\right ]&\geq 1-\delta.
	\end{align*}	
\end{lemma}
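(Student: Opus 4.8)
The plan is to recognize $\frac1n\sum_{t=1}^n\|AX_t\|_2^2=\frac1n\sum_{t=1}^n X_t^{\T}A^{\T}AX_t$ as a quadratic form in the jointly Gaussian vector $\mathbf{x}=\mathrm{vec}(X_1,\dots,X_n)\in\bR^{np}$ and to apply the Hanson--Wright inequality to show it stays above half of its mean. Let $\boldsymbol{\Gamma}=\E[\mathbf{x}\mathbf{x}^{\T}]$ be the $np\times np$ trajectory covariance, whose $p\times p$ blocks are $\boldsymbol{\Gamma}_{s,t}=\Sigma(\Theta^{\T})^{t-s}$ for $t\ge s$. Writing $\mathbf{x}=\boldsymbol{\Gamma}^{1/2}\mathbf{g}$ with $\mathbf{g}\sim\cN_{np}(0,I_{np})$, the quantity of interest equals $\frac1n\mathbf{g}^{\T}Q\mathbf{g}$ with $Q=\boldsymbol{\Gamma}^{1/2}(I_n\otimes A^{\T}A)\boldsymbol{\Gamma}^{1/2}\succeq 0$. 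First I would compute the mean: $\frac1n\tr(Q)=\tr(A\Sigma A^{\T})\ge\sigma_{\min}(\Sigma)\|A\|_2^2$, and since the Lyapunov equation~\eqref{eq:lyap} forces $\Sigma=\Theta\Sigma\Theta^{\T}+\Sigma_Z\succeq\Sigma_Z$, the mean is in fact at least $\bigl(\sigma_{\min}(\Sigma_Z)\wedge\sigma_{\min}(\Sigma)\bigr)\|A\|_2^2$.

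Next I would bound the two quantities that enter Hanson--Wright. Stability $\|\Theta\|_{\op}\le\gamma<1$ gives geometric decay of the cross-covariances, $\|\boldsymbol{\Gamma}_{s,t}\|_{\op}\le\gamma^{|t-s|}\sigma_{\max}(\Sigma)$, hence $\|\boldsymbol{\Gamma}\|_{\op}\le\sigma_{\max}(\Sigma)\sum_{d}\gamma^{|d|}$ and $\|Q\|_{\op}\le\|\boldsymbol{\Gamma}\|_{\op}\|A\|_{\op}^2$; for the Frobenius norm I would write $\|Q\|_2^2=\sum_{s,t}\tr\bigl(\boldsymbol{\Gamma}_{s,t}A^{\T}A\,\boldsymbol{\Gamma}_{t,s}A^{\T}A\bigr)$ and bound each summand by $\gamma^{2|t-s|}\sigma_{\max}^2(\Sigma)\|A\|_{\op}^2\|A\|_2^2$ using $|\tr(MN)|\le\|M\|_{\op}\|N\|_1$ with $\|A^{\T}A\|_1=\|A\|_2^2$ and $\|A^{\T}A\|_{\op}=\|A\|_{\op}^2$, so that after summing the geometric series $\|Q\|_2^2\lesssim n(1+\gamma^2)\sigma_{\max}^2(\Sigma)\|A\|_{\op}^2\|A\|_2^2$. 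Choosing the deviation level $u=\tfrac12\bigl(\sigma_{\min}(\Sigma_Z)\wedge\sigma_{\min}(\Sigma)\bigr)\|A\|_2^2$, on the complement of the bad event one obtains $\frac1n\sum_t\|AX_t\|_2^2\ge\frac1n\tr(Q)-u\ge\frac{\sigma_{\min}(\Sigma_Z)\wedge\sigma_{\min}(\Sigma)}{2(1+\gamma^2)}\|A\|_2^2$ (the extra $(1+\gamma^2)$ merely absorbs slack), while Hanson--Wright bounds the bad event by $2\exp\bigl(-c\min(n^2u^2/\|Q\|_2^2,\,nu/\|Q\|_{\op})\bigr)$. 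Plugging in the bounds, the binding (Frobenius) branch of the exponent is of order $n\,(\sigma_{\min}(\Sigma_Z)\wedge\sigma_{\min}(\Sigma))^2\|A\|_2^2\big/\bigl(\sigma_{\max}^2(\Sigma)\|A\|_{\op}^2\bigr)$, which exceeds $\log(2/\delta)$ precisely under the hypothesis~\eqref{cond:threshold_frobenius_norm} on $\|A\|_2/\|A\|_{\op}$; hence the bad event has probability at most $\delta$ and the lemma follows.

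The main obstacle is the last step's bookkeeping: one has to carry the geometric factors produced by the VAR cross-correlations through Hanson--Wright so that the threshold constant $2(1+\gamma^2)\sigma_{\max}(\Sigma)/(\sigma_{\min}(\Sigma_Z)\wedge\sigma_{\min}(\Sigma))$ — and not a larger one involving $(1-\gamma)^{-1}$ — is what is required, and to verify that the sub-exponential branch $nu/\|Q\|_{\op}$ is never the binding one in the regime where $\|A\|_2/\|A\|_{\op}$ satisfies~\eqref{cond:threshold_frobenius_norm}. Everything else (the Gaussian quadratic-form identity, the trace computation, and the geometric bound on $\boldsymbol{\Gamma}$) is routine.
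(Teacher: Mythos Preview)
Your approach is the same as the paper's: write $\sum_t\|AX_t\|_2^2$ as a Gaussian quadratic form in the stacked vector $\mathbb X=(X_1^{\T},\dots,X_n^{\T})^{\T}$ with covariance $\mathbb\Sigma$ (your~$\boldsymbol{\Gamma}$), lower-bound its expectation, and apply Hanson--Wright. The one place where the paper does something different is exactly the obstacle you flag at the end. Instead of controlling $\mathbb\Sigma$ through the geometric decay $\|\mathbb\Sigma_{s,t}\|_{\op}\le\gamma^{|s-t|}\sigma_{\max}(\Sigma)$ and summing the series (which, as you note, produces factors of $(1-\gamma)^{-1}$ or $(1-\gamma^2)^{-1}$ rather than the claimed $1+\gamma^2$), the paper exploits the explicit block-tridiagonal form of $\mathbb\Sigma^{-1}$: its diagonal blocks are $\Theta^{\T}\Sigma_Z^{-1}\Theta+\Sigma_Z^{-1}$ (last block $\Sigma_Z^{-1}$) and its off-diagonals are $-\Theta^{\T}\Sigma_Z^{-1}$ and its transpose. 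Bounding $\sigma_{\max}(\mathbb\Sigma^{-1})$ from this structure yields
\[
\sigma_{\min}(\mathbb\Sigma)\ \ge\ \frac{\sigma_{\min}(\Sigma_Z)\wedge\sigma_{\min}(\Sigma)}{1+\gamma^2},
\]
which is where the factor $(1+\gamma^2)$ in the statement actually comes from; for $\sigma_{\max}(\mathbb\Sigma)$ the paper decomposes the block-Toeplitz matrix into a lower- plus upper-triangular part and bounds it by $\sigma_{\max}(\Sigma)$. With these two spectral bounds in hand the paper simply uses $\|B\mathbb\Sigma B^{\T}\|_{\op}\le\|\mathbb\Sigma\|_{\op}\|A\|_{\op}^2$ and $\|B\mathbb\Sigma B^{\T}\|_2^2\le n\|\mathbb\Sigma\|_{\op}^2\|A\|_{\op}^2\|A\|_2^2$, so no blockwise summation of cross terms is needed.

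In short, your plan is correct but would deliver the lemma only with a larger threshold constant; to land on the precise constant $2(1+\gamma^2)\sigma_{\max}(\Sigma)/(\sigma_{\min}(\Sigma_Z)\wedge\sigma_{\min}(\Sigma))$ you should replace the geometric-series bounds on $\boldsymbol{\Gamma}$ by the direct spectral bound on $\mathbb\Sigma^{-1}$ coming from its tridiagonal structure.
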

\begin{proof}
	Recall that $X_{t+1}=\Theta X_t+Z_t$, where $Z_t$ are i.i.d. Gaussian vectors with the covariance matrix $\Sigma_Z$. We can write
	$$
	\sum_{t=1}^{n}\| AX_t\| _{2}^{2} =\|B \mathbb{X}\|_2^2,
	$$
	where $B$ is a $pn \times pn$ block-diagonal matrix with the identical diagonal blocks $A$ and $\mathbb{X}$ is the $pn$-dimensional Gaussian vector obtained by concatenation of the vectors $X_t$, $t=1,\dots,n$.
	Denote the covariance matrix of $\mathbb{X}$ by $\mathbb{\Sigma}$, thus 
	$\mathbb{X} \sim \mathcal{N}_{pn}(0, \mathbb{\Sigma})$. 
	
	It can be easily seen that  the covariance matrix of $\mathbb X$
	$$
	\mathbb{\Sigma} = \mathbb{E}(\mathbb{X}\mathbb{X}^{\T})=\Bigl(\mathbb{E}(X_{i} X_{j}^{\T})\Bigr)_{1 \leq i,j \leq n}
	$$
	is a symmetric block-Toeplitz  matrix with the blocks 
	$$
	\mathbb{E}(X_{i} X_{j}^{\T})= \Sigma\one_{\{i=j\}}+ \Theta^{i-j}\Sigma \one_{\{i> j\}} + 
	\Sigma\bigl( \Theta^{j-i}\bigr)^{\T}\one_{\{i<j \}}.
	$$
	Using the representation $\mathbb X=\mathbb \Sigma^{1/2} W$, where  $W \sim \mathcal{N}_{pn}(0, I_{pn})$ is a $pn$-dimensional Gaussian vectors with i.i.d. entries, we see that  $B\mathbb X=(B\sqrt{\mathbb{\Sigma}})W \sim \mathcal{N}_{pn}(0, B\mathbb{\Sigma}B^{\T})$. Note that $\mathbb{E}( \| B\mathbb X\|_2^2)= \mathbb E \| B\sqrt{\mathbb{\Sigma}}W \|_{2}^{2})=\tr( B\mathbb{\Sigma}B^{\T})$. Then, using the Hanson-Wright inequality we get
	\begin{equation}\label{HW1 inequality}
	\P \left( \Bigl | \| B \mathbb{X}_{T} \|_{2}^{2}-\tr (B\mathbb{\Sigma}B^{\T}) \Bigr |  > x \right)\leq 2\exp\left[-c \min\left(\frac{x^{2}}{\| B\mathbb{\Sigma}B^{\T} \| _{2}^{2}},\frac{x}{\| B\mathbb{\Sigma}B^{\T} \| _{\mathrm{op}}}\right)\right],
	\end{equation}
	where $c>0$ is a universal constant. 
	
	Note that  $\tr(\mathbb{\Sigma}B^{\T}B)\geq \sigma_{\min}(\mathbb{\Sigma})\tr(B^{\T}B)$, since $\tr((\mathbb{\Sigma}-\sigma_{\min}(\mathbb{\Sigma})I_{d})B^{\T}B) \geq 0$. 
	Thus, 
	$$
	\tr( B\mathbb{\Sigma}B^{\T} )\geq \sigma_{\min}(\mathbb{\Sigma}) n \| A \|_{2}^{2}.
	$$
	
	Set $x= \|\mathbb{\Sigma}\|_{\mathrm{op}} \|A\|_{\mathrm{op}} \|A\|_{2} \sqrt{\log(2/\delta)n/c}$. We have $\| B\mathbb{\Sigma}B^{\T}  \|_{\mathrm{op}} \leq \| \mathbb{\Sigma} \|_{\mathrm{op}} \| A \|_{\mathrm{op}}^{2}$.  Consequently, 
	the condition 
	\begin{equation}\label{ineq1:HW}
	\frac{\|A\|_2}{\|A\|_{\mathrm{op}}} \ge  \sqrt{\frac{\log( 2/\delta)}{nc}}
	\end{equation}
	implies $\dfrac{cx}{\| B\mathbb{\Sigma}B^{\T} \|_{\mathrm{op}}} > \log(2/\delta)$.
	On the other hand, we have 
	$$
	\| B\mathbb{\Sigma}B^{\T} \|_{2}^{2} \leq \| \mathbb{\Sigma}B^{\T}B \|_{2}^{2} \| \leq \| \mathbb{\Sigma}\|_{\mathrm{op}}^{2} \| B \|_{\mathrm{op}}^{2} \| B \|_{2}^{2} \leq    \| \mathbb{\Sigma}\|_{\mathrm{op}}^{2}\|A\|_{\mathrm{op}}^{2}n \|A \|_{2}^{2} 
	$$
	which implies $\dfrac{cx^{2}}{\| B\mathbb{\Sigma}B^{\T}  \| _{2}^{2}} \geq \log(2/\delta)$. 
	In addition, 
	\begin{equation}\label{ineq2:HW}
	\frac{\|A\|_2}{\|A\|_{\mathrm{op}}} \ge 2 \kappa(\mathbb{\Sigma})\sqrt{\dfrac{\log(2/\delta)}{nc}}
	\end{equation}
	immediately implies  that $\tr(B\mathbb{\Sigma}B^{\T})-x \geq \dfrac{\tr(B\mathbb{\Sigma}B^{\T})}{2}$.
	Thus, \eqref{HW1 inequality} implies 
	$$
	\P \left( \| B \mathbb{X}_{T} \|_{2}^{2}\le \frac { \sigma_{\min}(\mathbb \Sigma)}2 \|A\|_2^2\right)	\le \P \left( \| B \mathbb{X}_{T} \|_{2}^{2}\le \frac 12\tr (B\mathbb{\Sigma}B^{\T}) \right)\leq \delta
	$$
	We can bound the maximal eigenvalue of $\mathbb \Sigma$ using the decomposition $ \mathbb{\Sigma} = \Lambda+\Lambda^{\T}$  into the sum of a lower and upper triangular parts of  $\mathbb \Sigma$ where the matrices $\Lambda$ and $\Lambda^{\T}$ have the same diagonal blocks $\frac 12\Sigma$. Then  
	$
	\sigma_{\max}(\mathbb{\Sigma})=\sigma_{\max}(\Lambda+\Lambda^{\T})\leq 2 \sigma_{\max}(\Lambda)\leq \sigma_{\max}(\Sigma).
	$
	
	To bound $\sigma_{\min}(\mathbb\Sigma)$, note that $\sigma_{\min}(\mathbb{\Sigma})=(\sigma_{\max}(\mathbb{\Sigma}^{-1}))^{-1}$ where the inverse matrix $\Sigma$ has a simple block-tridiagonal with constant upper and lower diagonal blocks  $-\Theta^{\T} \Sigma_Z^{-1}$ and $-\Theta^{\T} \Sigma_Z^{-1}$, respectively. On the diagonal of this matrix we have the same first $T-1$ blocks $\Theta^{\T}\Sigma_{Z}^{-1}\Theta+\Sigma^{-1}$ and the last block is equal to $\Sigma_Z^{-1}$.
	Using the same reasoning as above, we see that 
	\begin{align*}
	\sigma_{\max}(\mathbb{\Sigma}^{-1})& \leq \max\Bigl( \|  \Theta^{\T}\Sigma_{Z}^{-1}\Theta+\Sigma^{-1}  \|_{\mathrm{op}}, \|\Theta^{\T}\Sigma_{Z}^{-1}\Theta+\Sigma_{Z}^{-1}\|_{\mathrm{op}}, \| \Sigma_{Z}^{-1}\|_{\mathrm{op}}\Bigr)\\
	&\le \gamma^2 \|\Sigma_Z^{-1}\|_{\mathrm{op}}+ \max(\|\Sigma_Z^{-1}\|_{\mathrm{op}},\|\Sigma^{-1}\|_{\mathrm{op}}).
	\end{align*}
	Therefore 
	$
	\sigma_{\min}( \mathbb{\Sigma}) \ge \dfrac{ \sigma_{\min}(\Sigma_{Z})\wedge \sigma_{\min}(\Sigma)  }{1+\gamma^{2}}
	$
	and 
	$$
	\kappa(\mathbb{\Sigma})\leq \dfrac{(1+\gamma^2)\sigma_{\max}(\Sigma)}{ \sigma_{\min}(\Sigma_{Z})\wedge \sigma_{\min}(\Sigma)}.
	$$
	Finally, these estimates together with condition \eqref{cond:threshold_frobenius_norm} imply \eqref{ineq1:HW} and \eqref{ineq2:HW}  and  the statement of the lemma follows.
\end{proof}

The following lemma is from~\citep{Rump2018}:
\begin{lemma}\label{lem:detbound}
	Let $A$ be real or complex $n\times n$ matrix. Then 
	\begin{enumerate}
		\item $|\det(I+A)|\le \exp\Bigl\{\mathrm{Re}(\tr(A))+\frac 12\|A\|_2^2 \Bigr\}$
		\item Suppose the eigenvalues $\lambda_k$ of $A$ satisfy $\mathrm{Re}(\lambda_k)>-1$, denote $\mu_k=\min(0,\mathrm{Re}(\lambda_k))$. Then 
		$$
		|\det(I+A)|\ge \exp\Bigl\{\mathrm{Re}(\tr(A))-\frac {\frac12\|A\|_2^2}{1+\min_k \mu_k} \Bigr\}
		$$
		\item Let $\rho(A)$ be spectral radius of $A$ and $\rho(A)<1$. Then 
		$$
		|\det(I+A)|\ge \exp\Bigl\{\mathrm{Re}(\tr(A))-\frac {\frac12\|A\|_2^2}{1-\rho(A)} \Bigr\}.
		$$
	\end{enumerate}
\end{lemma}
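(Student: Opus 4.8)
The plan is to reduce the three inequalities to scalar estimates on the eigenvalues of $A$. Write $\lambda_1,\dots,\lambda_n$ for the eigenvalues of $A$ counted with algebraic multiplicity; then $\det(I+A)=\prod_{k=1}^{n}(1+\lambda_k)$, so $\log|\det(I+A)|=\sum_{k=1}^{n}\log|1+\lambda_k|$ (all three bounds hold trivially when some $\lambda_k=-1$, so I may assume $\lambda_k\neq-1$ for every $k$, whence $|1+\lambda_k|^2>0$). Besides this factorization, the only matrix-theoretic fact I would use is Schur's inequality $\sum_{k=1}^{n}|\lambda_k|^2\le\|A\|_2^2$, which follows from the Schur triangularization $A=U(D+N)U^{*}$ with $D$ diagonal and $N$ strictly upper triangular, since then $\|A\|_2^2=\|D\|_2^2+\|N\|_2^2\ge\sum_k|\lambda_k|^2$.

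For part~1 I would write $|1+\lambda|^2=1+\bigl(2\,\mathrm{Re}\,\lambda+|\lambda|^2\bigr)$ and apply the elementary bound $\log(1+s)\le s$, valid for $s>-1$ and applicable here with $s=|1+\lambda|^2-1>-1$. This gives $\log|1+\lambda|=\frac12\log\bigl(1+2\,\mathrm{Re}\,\lambda+|\lambda|^2\bigr)\le\mathrm{Re}\,\lambda+\frac12|\lambda|^2$; summing over $k$ and invoking Schur's inequality yields $\log|\det(I+A)|\le\mathrm{Re}(\tr A)+\frac12\|A\|_2^2$.

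For part~2 the key ingredient is the scalar inequality $\log(1+t)\ge t-\dfrac{t^2}{2\,(1+\min(0,t))}$, valid for every real $t>-1$, which I would prove by separating the cases $t\ge0$ and $-1<t<0$: in each case the difference of the two sides vanishes at $t=0$ and has a sign-definite derivative (equal to $t^2/(1+t)\ge0$ on $[0,\infty)$ and to $-t^2/\bigl(2(1+t)^2\bigr)\le0$ on $(-1,0]$). To handle a complex eigenvalue $\lambda=\lambda_k$ with $\mathrm{Re}\,\lambda>-1$, I would use that the imaginary part only helps, $|1+\lambda|^2=(1+\mathrm{Re}\,\lambda)^2+(\mathrm{Im}\,\lambda)^2\ge(1+\mathrm{Re}\,\lambda)^2$, so that $\log|1+\lambda|\ge\log(1+\mathrm{Re}\,\lambda)\ge\mathrm{Re}\,\lambda-\dfrac{(\mathrm{Re}\,\lambda)^2}{2(1+\mu_k)}\ge\mathrm{Re}\,\lambda-\dfrac{|\lambda|^2}{2\,(1+\min_j\mu_j)}$, where $\mu_k=\min(0,\mathrm{Re}\,\lambda_k)$ and $1+\min_j\mu_j>0$ because every $\mathrm{Re}\,\lambda_k>-1$. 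Summing over $k$ and using Schur's inequality once more (together with $1+\min_j\mu_j>0$) gives $\log|\det(I+A)|\ge\mathrm{Re}(\tr A)-\dfrac{\frac12\|A\|_2^2}{1+\min_j\mu_j}$, which is part~2.

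Part~3 is then an immediate corollary: if $\rho(A)<1$ then every eigenvalue satisfies $\mathrm{Re}\,\lambda_k\ge-|\lambda_k|\ge-\rho(A)>-1$, hence $\min_j\mu_j\ge-\rho(A)$ and $1+\min_j\mu_j\ge1-\rho(A)>0$, so the lower bound of part~2 is at least $\mathrm{Re}(\tr A)-\dfrac{\frac12\|A\|_2^2}{1-\rho(A)}$. I expect the only mildly delicate steps to be the verification of the scalar lemma $\log(1+t)\ge t-t^2/\bigl(2(1+\min(0,t))\bigr)$ and the bookkeeping that the imaginary part of an eigenvalue enlarges $|1+\lambda|$ (which is favorable for the lower bound) while still being absorbed into the $|\lambda|^2$ term on the right; the rest is routine, and no probabilistic input enters.
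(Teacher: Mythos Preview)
The paper does not prove this lemma; it only cites \citep{Rump2018}. Your argument is correct and is essentially the same as Rump's original proof: reduce to the eigenvalues via $\det(I+A)=\prod_k(1+\lambda_k)$, apply the scalar bounds $\log(1+s)\le s$ and $\log(1+t)\ge t-\tfrac{t^2}{2(1+\min(0,t))}$, and close with Schur's inequality $\sum_k|\lambda_k|^2\le\|A\|_2^2$. The verification of the scalar lower bound by the two-case derivative computation is fine, and your observation that the imaginary part only increases $|1+\lambda|$ (helping the lower bound while still being controlled by $|\lambda|^2$ on the right) is exactly the right way to pass from real to complex eigenvalues.
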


The following lemma is the result of Theorem~1.1 of~\cite{Tippett2000} reformulated in the notation of our paper. 
\begin{lemma}\label{lem:stab_lyapunov}
	Let $\Theta$ be a stable matrix, $\|\Theta\|_{\mathrm{op}}=\gamma<1$. Let $\Sigma=\Sigma^{\T}>0$ be the solution of the Lyapunov equation $\Sigma=\Theta\Sigma\Theta^{\T}+\Sigma_Z$, where $\Sigma_Z$ is symmetric positive definite.  Then 
	\begin{equation}\label{eq:cond_number}
	\kappa(\Sigma)\le \kappa(\Sigma_Z)\frac{1+\gamma^2}{1-\gamma^2}.
	\end{equation}
\end{lemma}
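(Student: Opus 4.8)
The plan is to control $\sigma_{\max}(\Sigma)$ from above and $\sigma_{\min}(\Sigma)$ from below, each purely in terms of $\Sigma_Z$ and $\gamma$, and then to take the quotient. Throughout I will use that, since $\rho(\Theta)\le\|\Theta\|_{\mathrm{op}}=\gamma<1$, the discrete Lyapunov (Stein) equation $\Sigma=\Theta\Sigma\Theta^{\T}+\Sigma_Z$ has the unique symmetric positive-definite solution $\Sigma=\sum_{k\ge0}\Theta^{k}\Sigma_Z(\Theta^{\T})^{k}$.

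First I would bound the top of the spectrum. Taking operator norms on both sides of the Lyapunov equation and using submultiplicativity, $\|\Sigma\|_{\mathrm{op}}\le\|\Sigma_Z\|_{\mathrm{op}}+\|\Theta\|_{\mathrm{op}}^{2}\|\Sigma\|_{\mathrm{op}}=\|\Sigma_Z\|_{\mathrm{op}}+\gamma^{2}\|\Sigma\|_{\mathrm{op}}$, and since $\gamma<1$ this rearranges to $\sigma_{\max}(\Sigma)=\|\Sigma\|_{\mathrm{op}}\le\|\Sigma_Z\|_{\mathrm{op}}/(1-\gamma^{2})$; the same estimate also drops out term by term from the Neumann series, $\|\Sigma\|_{\mathrm{op}}\le\sum_{k\ge0}\gamma^{2k}\|\Sigma_Z\|_{\mathrm{op}}$.

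Next I would bound the bottom of the spectrum. Because $\Theta\Sigma\Theta^{\T}\succeq0$, the Lyapunov equation yields the Loewner inequality $\Sigma\succeq\Sigma_Z$, whence $\sigma_{\min}(\Sigma)=\min_{\|v\|_{2}=1}v^{\T}\Sigma v\ge\min_{\|v\|_{2}=1}v^{\T}\Sigma_Z v=\sigma_{\min}(\Sigma_Z)$.

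Combining the two bounds,
\[
\kappa(\Sigma)=\frac{\sigma_{\max}(\Sigma)}{\sigma_{\min}(\Sigma)}\le\frac{\|\Sigma_Z\|_{\mathrm{op}}}{(1-\gamma^{2})\,\sigma_{\min}(\Sigma_Z)}=\frac{\kappa(\Sigma_Z)}{1-\gamma^{2}}\le\kappa(\Sigma_Z)\,\frac{1+\gamma^{2}}{1-\gamma^{2}},
\]
the last inequality simply because $1\le1+\gamma^{2}$. There is no genuine obstacle here: the only points deserving care are the well-posedness of the Stein equation under $\|\Theta\|_{\mathrm{op}}<1$ and the positive definiteness of $\Sigma_Z$ (needed so that $\sigma_{\min}(\Sigma_Z)>0$ and $\kappa(\Sigma_Z)$ is finite), together with the observation that the argument in fact delivers the marginally stronger bound $\kappa(\Sigma)\le\kappa(\Sigma_Z)/(1-\gamma^{2})$, so matching the exact constant $(1+\gamma^{2})/(1-\gamma^{2})$ of Tippett's theorem costs nothing extra.
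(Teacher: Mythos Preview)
Your argument is correct. In fact you establish the sharper inequality $\kappa(\Sigma)\le\kappa(\Sigma_Z)/(1-\gamma^{2})$, from which the stated bound follows trivially since $1\le 1+\gamma^{2}$.

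The paper does not give its own proof of this lemma; it simply cites Theorem~1.1 of \cite{Tippett2000} and restates the conclusion. Your approach is therefore not a comparison case but a self-contained elementary proof that the paper omits. The two ingredients you use --- submultiplicativity of the operator norm applied to the Lyapunov identity to control $\sigma_{\max}(\Sigma)$, and the Loewner ordering $\Sigma\succeq\Sigma_Z$ (immediate from $\Theta\Sigma\Theta^{\T}\succeq 0$) to control $\sigma_{\min}(\Sigma)$ --- are exactly the natural ones, and nothing more is needed here. The reference to Tippett presumably handles a more general or sharper situation, but for the purposes of this paper your direct argument is both sufficient and slightly stronger in the constant.
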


	\section{Additional simulation results}
	We consider the situation of uncorrelated Gaussian noise, $\Sigma_Z=\id_p$. We report the results of 100 simulations. 
	The transition matrices $\Theta_1$ and $\Theta_2$ are defined as follows:
	$$
	\Theta_1=UD_1U^{\T},\quad \Theta_2=U D_2 U^{\T},
	$$
	where $U$ is a unitary matrix, $D_1$ and $D_2$ are diagonal matrices with the largest absolute eigenvalues $\gamma_1$ and $\gamma_2$, respectively. Thus the Frobenius norm of the change is expressed in terms of the eigenvalues of $D_1$ and $D_2$: $\|\Delta\Theta\|_2=\|D_1-D_2\|_2$.
	
	
	\subsection{Quantile of the test statistic under the null}
	
	To simulate the quantile, we need to sample the test statistic $\mathcal G(t)$ under the null with the transition matrices estimated from the data. Since we do not know whether our data contains a change-point or not, we will sample the quantiles of $\mathcal G(t)$ based on the distribution $\mathrm{VAR}_p(\widehat\Theta_1,\Sigma_Z)$ and on  the distribution $\mathrm{VAR}_p(\widehat\Theta_2,\Sigma_Z)$, where $\widehat\Theta_1$ and $\widehat\Theta_2$ are estimations of $\Theta_1$ and $\Theta_2$ obtained from the first $\lfloor Th\rfloor $ and the last $\lfloor Th\rfloor$ observations of the process. To guarantee the stability of simulated processes, these two estimators are adjusted to have the operator norm to be equal to $\gamma_1$ and $\gamma_2$, respectively. Thus, we will have two quantiles of level $1-\alpha$, the first one is based on the hypothesis that the true parameter matrix is the one of the first $\lfloor Th\rfloor $ observations and the other one is based on the hypothesis that the true parameter matrix equals to the one of the last $\lfloor Th\rfloor $ observations. The quantile simulation from a  given data subsample $\breve{ \mathfrak X}$ is presented in Algorithm~\ref{algo:quantile}.
	
	\RestyleAlgo{ruled}
	\begin{algorithm}[htbp!]
		\caption{Simulation of $1-\alpha$ quantile of $\mathcal G(t)$}\label{algo:quantile}
		\KwData{$\breve{ \mathfrak X}$, $\Theta$, $\Sigma_Z$, $\alpha$, $t$}
		\KwResult{Quantile $q_{\alpha,t}$ of the distrbution of $\mathcal G(t)$}
		S=1500\; 
		s=1\;
		\While{$s<S$}
		{
			Sample $\tilde X_0$ from $\breve{\mathfrak X}$ with replacement\;
			Sample observations $\tilde {\mathfrak X}=(\tilde X_0,\tilde X_1,\dots,\tilde X_T)$ from  $\mathrm{VAR}_p(\Theta,\Sigma_Z)$\;
			Fing the SDP solutions $\widetilde \Theta_1^t=\arg\min\limits_{M\in\bR^{p\times p}} \varphi_1^t(\tilde {\mathfrak X},M)$
			and $\widetilde \Theta_2^t=\arg\min\limits_{M\in\bR^{p\times p}} \varphi_2^t(\tilde{\mathfrak X},M)$\;
			Calculate
			$$
			\mathcal G_s(t)=\frac{t}T \Bigl(\varphi_1^t(\tilde {\mathfrak X},\widetilde\Theta_2^t)-\varphi_1^t(\tilde {\mathfrak X},\widetilde\Theta_1^t)\Bigr) + \frac{T-t}T\Bigl(\varphi_2^t(\tilde {\mathfrak X},\widetilde\Theta_1^t)-\varphi_2^t(\tilde {\mathfrak X},\widetilde\Theta_2^t)\Bigr)
			$$
			s=s+1\;
		}
		Calculate the $1-\alpha$ level empirical quantile $q_{\alpha,t}$ from the sample $\mathcal G_1(t),\dots, \mathcal G_S(t)$
	\end{algorithm}

	\subsection{Algorithm of testing for a change-point}
	
	We propose the following practical testing procedure. For each value of a possible change-point $t\in \mathcal T$, we calculate the value of the test statistic $\mathcal G(t)$.  The details about estimation of the transition matrices using the SDP programs  are given in Section 3. We simulate the empirical quantiles $q_{\alpha,t}^{(1)}$ and $q_{\alpha,t}^{(2)}$ of the test statistic distribution under the null based on the first and last portions of observations. We detect a change if for some $t\in\mathcal T$ the test statistic $\mathcal G(t)$ is greater that $q_{\alpha,t}=\max(q_{\alpha,t}^{(1)},q_{\alpha,t}^{(2)})$. The details are presented in Algorithm~\ref{algo:chpdetect}.
	
	\begin{algorithm}[h!]
		\caption{VAR change-point detection}\label{algo:chpdetect}
		\KwData{$\mathfrak X=(X_1,\dots,X_T)$, $\gamma_1$, $\gamma_2$, $\mathcal T$, $h$}
		\KwResult{$\psi=1$ if there is a change-point, otherwise $\psi=0$}
		\For{$t\in \mathcal T$}
		{
			
			Estimate $\breve \Theta_1$ using $X_1,\dots,X_{\lfloor Th\rfloor}$\;
			Estimate $\breve \Theta_2$ using $X_{T-{\lfloor Th\rfloor}+1},\dots,X_T$\;
			Adjustement of estimators:
			$$
			\widehat \Theta_1= \gamma_1\frac{\breve \Theta_1}{\|\breve \Theta_1\|_{\mathrm{op}}},\quad \widehat \Theta_2= \gamma_2\frac{\breve \Theta_2}{\|\breve \Theta_2\|_{\mathrm{op}}}
			$$
			
			Generate quantile $q^{(1)}_{\alpha,t}$ using $\mathfrak X^{(1)}=(X_0,\dots,X_{\lfloor Th\rfloor})$ and $\widehat\Theta_1$\;
			Generate quantile $q^{(2)}_{\alpha,t}$ based on $\mathfrak X^{(2)}=(X_{T-{\lfloor Th\rfloor}},\dots,X_T)$ and $\widehat\Theta_2$\;
			Calculate $\widehat \Theta_1^t=\arg\min\limits_{M\in\bR^{p\times p}} \varphi_1^t(\mathfrak X,M)$ and $\widehat \Theta_2^t=\arg\min\limits_{M\in\bR^{p\times p}} \varphi_2^t(\mathfrak X,M)$\;
			Calculate the test statstic
			$$\mathcal G(t)=\frac{t}T \Bigl(\varphi_1^t(\mathfrak X,\widehat\Theta_2^t)-\varphi_1^t(\mathfrak X,\widehat\Theta_1^t)\Bigr) + \frac{T-t}T\Bigl(\varphi_2^t(\mathfrak X,\widehat\Theta_1^t)-\varphi_2^t(\mathfrak X,\widehat\Theta_2^t)\Bigr)$$

			$\psi_t=0$\;
			\If {$\mathcal G(t) >q^{(1)}_{\alpha,t}\vee  q^{(2)}_{\alpha,t}$}{$\psi_t=1$}
			t=t+1\;
		}
		$\psi=\max\limits_{t=1,\dots,T-1}\psi_t$
	\end{algorithm}
	
	\subsection{Simulation results}
	We  provide the power of the test obtained for different simulation scenarios with the quantiles simulated at the level $\alpha=0.05$. We consider the VAR processes of dimension $p=100$. 
	
	First, we consider the case of the change-point location in the middle, $\tau=T/2$ for the number of observations $T$ varying from  1500 to 5000  with $\gamma_1=\gamma_2=0.9$ and the ranks $R_1=R_2=4$. The results  in Fig.~\ref{fig:power-T} demonstrate that the detection becomes easier for a larger number of 
	observations $T$. The red curve corresponds to the case of $T=1500$, the black one stands for $T=6000$. 
	
	Next, we consider the case of $T=5000$ observations of the VAR process  with $\gamma_1=\gamma_2=0.9$ and the ranks $R_1=R_2=5$. The change-point location $\tau/T$ varies from 0.1 to 0.9. The results  in Fig.~\ref{fig:power-tau} show that the detection of the change-point is easier if it is located in the middle and harder when the change-point is close to the interval boundaries. We can also see that the power curves for $\tau/T=0.1$ and $\tau=0.9$  are almost identical and the same holds for $\tau/T=0.3$ and $\tau/T=0.7$. This simulation result is in line with the definition of the jump energy $q(\tau/T) \|\Delta\Theta\|_2$ that is symmetric with respect to $\tau=T/2$. 
	
	\begin{figure}[htbp!]
		\begin{minipage}{0.46\linewidth}
			\centering
			\includegraphics[width=\linewidth]{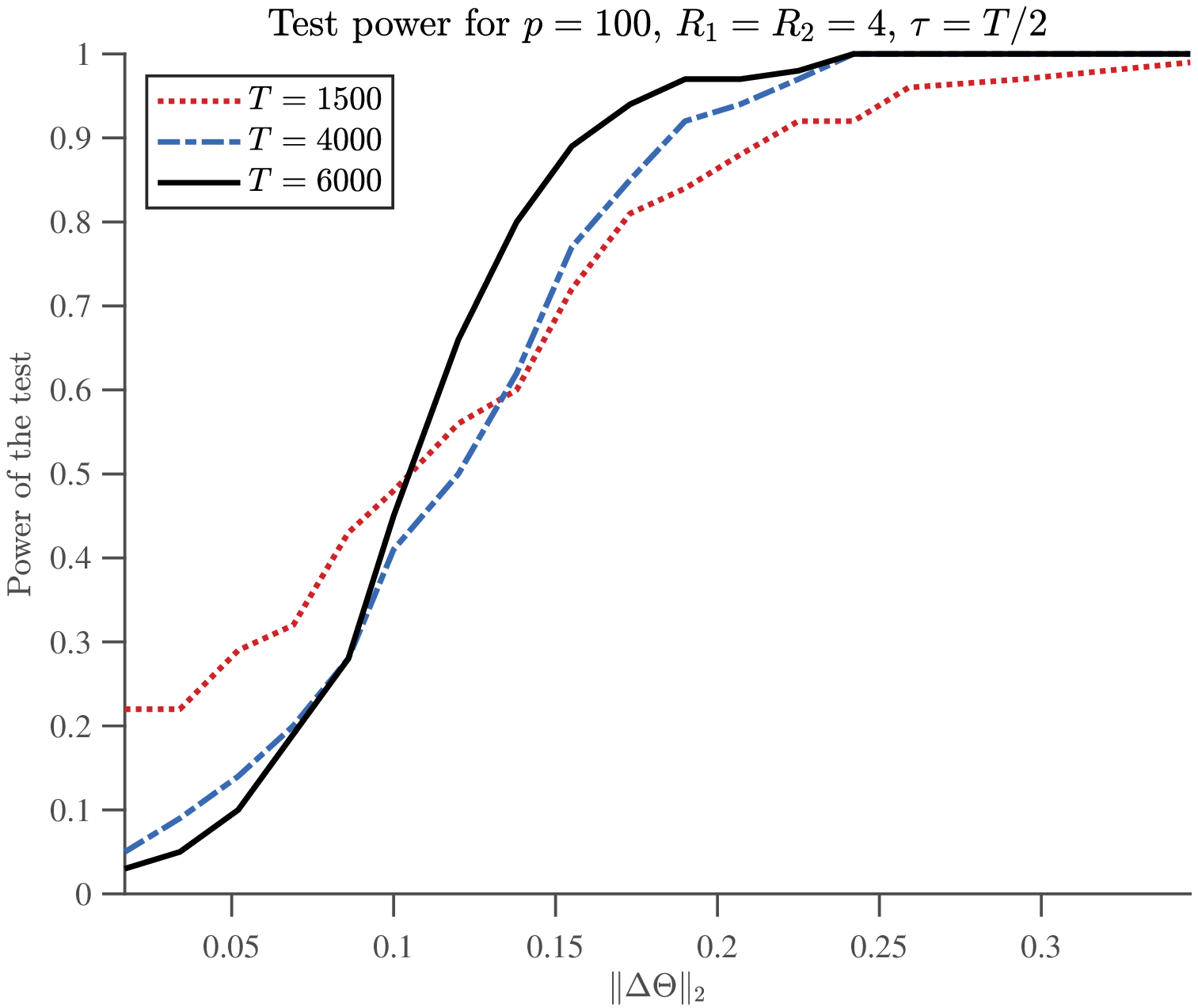}
			\caption{The test power for known location of the change-point and different number of observations $T$}
			\label{fig:power-T}
		\end{minipage}
		\hfill
		\begin{minipage}{0.46 \linewidth}
			\centering
			\includegraphics[width=\linewidth]{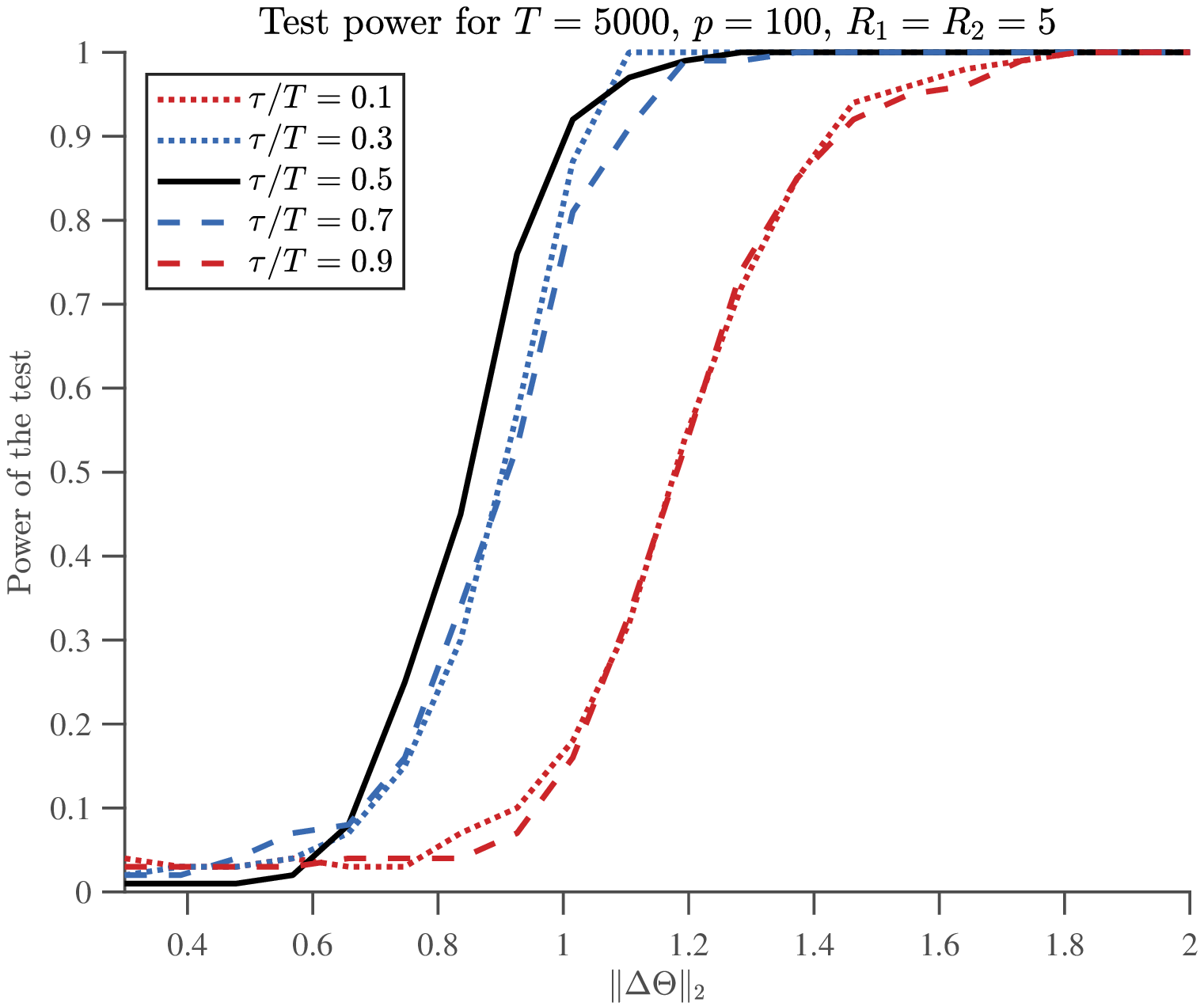}
			\caption{Dependence of the test power on different locations of the change-point}
			\label{fig:power-tau}
		\end{minipage}
	\end{figure}
	
	Finally, we consider $T=5000$ observations with fixed values $\gamma_1=\gamma_2=0.9$ and varying  matrix ranks $R_1=R_2=R\in\{5,9,17,25\}$. The results  in Fig.~\ref{fig:power-rank} demonstrate that the detection is easier for smaller rank of the matrices which is in line with the obtained detection rate of order $Rp/T$.

	\begin{figure}[htbp!]
		\centering
		\begin{minipage}{0.48\linewidth}	
			\includegraphics[width=\linewidth]{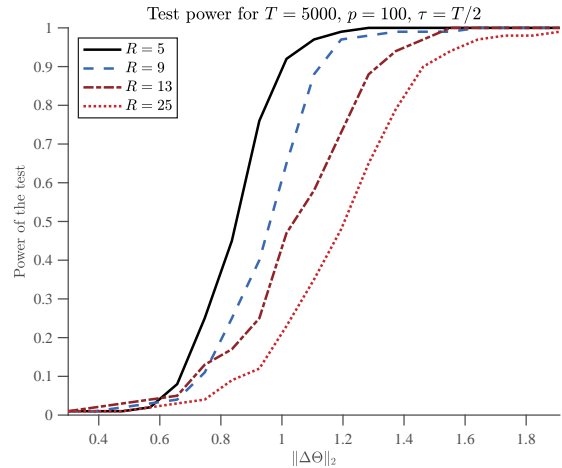}
			\caption{The test power depending on rank of the transition matrices}
			\label{fig:power-rank}
		\end{minipage}
	\end{figure}
	
\end{appendix}

\end{document}